\DeclareFontFamily{OML}{rsfs}{\skewchar\font'177}
\DeclareFontShape{OML}{rsfs}{m}{n}{ <5> <6> rsfs5 <7> <8> <9> rsfs7
  <10> <10.95> <12> <14.4> <17.28> <20.74> <24.88> rsfs10 }{}
\DeclareMathAlphabet{\mathfs}{OML}{rsfs}{m}{n}
\newtheorem{theorem}{Theorem}[section]
\newtheorem*{theorem*}{Theorem}
\newtheorem{lemma}{Lemma}[section]
\newtheorem*{example*}{Example}
\newtheorem{definition}{Definition}[section]
\newtheorem{corollary}{Corollary}[section]
\numberwithin{equation}{section}
\def\var{\textrm{osc}}
\renewcommand{\epsilon}{\varepsilon}
\def\text#1{\textrm{#1}}
\def\emptyset{\varnothing}
\def\vf{\varphi}
\def \R{\mathbb R}
\def \N{{\mathbb N}}
\def\E{\mathbb E}
\def \Z{\mathbb Z}
\def\ov{\overline}
\def\un{\underline}
\def\C{\mathbb C}
\def\({\biggl(}
\def\){\biggr)}
\def\<{\bold\langle}
\def\>{\bold\rangle}
\def\eps{\varepsilon}
\DeclareMathOperator\const{const}
\DeclareMathOperator\Var{Var}
\def\p{\mathfrak p}
\def\q{\mathfrak q}
\def\eq0{{m_0}}
\def\eqp{{\mu_\phi}}
\def\mm0{{\mu_0}}
\DeclareMathOperator\Log{Log}
\title[EKP inequality]{Effective intrinsic ergodicity for countable state  Markov shifts}
\author{Ren\'e R\"uhr and Omri Sarig}
\thanks{The authors were partially supported by  ISF grant 1149/18 and BSF grant 2016105.}
\date{\today}
\keywords{Countable Topological Markov Shifts, Intrinsic Ergodicity, Pressure, Entropy, Equilibrium Measure, Measure of Maximal Entropy, Spectral Gap, Strong Positive Recurrence, Thermodynamic Formalism}
\subjclass[2010]{37A35, 37D35 (primary), 37C30, 05C63 (secondary)}
\address{Faculty of Mathematics and Computer Science\\ The Weizmann Institute of Science\\ POB 26, Rehovot, Israel}
\email{omsarig@gmail.com reneruehr@gmail.com}
\begin{document}

\begin{abstract}
For strongly positively recurrent countable state Markov shifts, we bound  the distance between an invariant  measure  and the  measure  of maximal entropy in terms of the difference of their entropies. This extends an earlier result for subshifts of finite type, due to Kadyrov. We provide a similar bound for equilibrium measures of strongly positively recurrent potentials, in terms of the pressure difference. For  measures with nearly maximal entropy, we have new, and  sharp, bounds. The strong positive recurrence condition is necessary.
\end{abstract}
\maketitle

\begin{center}
{\em Dedicated to Benjy Weiss on the occasion of his eightieth birthday}
\end{center}

\section{Introduction and summary of main results}
Topological dynamical systems with unique measures of maximal entropy  are called  {\em intrinsically ergodic} \cite{Weiss-Intrisic}.
This property  is weaker than unique ergodicity, and this weakening is  useful, because it allows for many more examples. Natural instrinsically ergodic systems which are not uniquely ergodic can be found in symbolic dynamics \cite{Parry-Intrinsic-MC}, \cite{Gurevich-Measures-Of-Maximal-Entropy}, \cite{Bowen-UES}, \cite{Climenhaga-Thompson-beta}, \cite{Climenhaga-CMP}, \cite{Pavlov-Proc-AMS};
 one-dimensional dynamics \cite{Hofbauer-Intrinsic-Erg}, \cite{Buzzi-Interval-Maps}; the theory of diffeomorphisms \cite{Adler-Weiss-Similarity-Toral-Automorphisms}, \cite{Bowen-UES}, \cite{Buzzi-Crovisier-Sarig-MME}; and in the theory of geodesic flows \cite{Knieper}, \cite{Burns-Climenhaga-Fisher-Thompson}, \cite{Climenhaga-Knieper}.  (This list of references is incomplete, the relevant literature is too plentiful to survey.)

Although weaker than unique ergodicity, intrinsic ergodicity is powerful enough to  have many applications. These include classification problems in ergodic theory  \cite{Adler-Weiss-Similarity-Toral-Automorphisms}; the foundations of statistical mechanics \cite{Ruelle-Z-nu}, \cite{Ruelle-TDF-book}, \cite{Sinai-Gibbs}, the analysis of  periodic orbits \cite{Bowen-Closed-Geodesics}, \cite{Parry-Pollicott-Asterisque}; and  number theory \cite{Einsiedler-Lindenstrauss-Philippe-Venkatesh}. (Again, these are very partial lists.)

  Here we will focus on  the connection between intrinsic ergodicity and equidistribution of measures with high entropy, in the  case of topological Markov shifts.

Consider for example a topologically transitive subshift of finite type $\sigma:\Sigma^+\to\Sigma^+$. This system has a  unique measure of maximal entropy  $\mu_0$ \cite{Parry-Intrinsic-MC}.  Subshifts of finite type are compact, and their  entropy map $\mu\mapsto h_\mu(\sigma)$ is upper semi-continuous in the weak star topology. Together with  intrinsic ergodicity, this easily implies that for every sequence of  shift invariant probability measures $\mu_n$,
\begin{equation}\label{weak-EKP}
\text{ if $h_{\mu_n}(\sigma)\to h_{\mu_0}(\sigma)$, then }\mu_n\to\mu_0\text{ weak star.}
\end{equation}

Kadyrov gave a bound for the rate of convergence \cite{Kadyrov-Effective-Uniqueness}. He showed that there exist constants $C_\beta$ such that for every shift invariant probability measure $\mu$ and for every H\"older continuous function $\psi:\Sigma^+\to\R$ with H\"older exponent $\beta$,
\begin{equation}\label{EKP-ineq}
\left|\int\psi d\mu-\int\psi d\mu_0\right|\leq C_\beta\|\psi\|_\beta\sqrt{h_{\mu_0}(\sigma)-h_\mu(\sigma)},
\end{equation}
where $\|\psi\|_\beta$ is the $\beta$-H\"older norm of $\psi$ (see \S\ref{s.setup}).

Quantitative versions of \eqref{weak-EKP} like \eqref{EKP-ineq} are called  {\em effective intrinsic ergodicity estimates}.
They  first appeared in the doctoral thesis of F.\ Polo \cite{Polo-PhD} for the $\times 2$ map on $\R/\Z$  and for hyperbolic toral automorphisms, but with a cubic root instead of a square root. Polo credits  M.\ Einsiedler for outlining the proof for the $\times 2$ map, and  we will henceforth call \eqref{EKP-ineq} the {\em Einsiedler-Kadyrov-Polo (EKP) inequality. }
For similar inequalities for other systems, see  \cite{Kadyrov-Effective-Uniqueness}, \cite{Ruhr-Effective-Uniqueness}, \cite{khayutin2017large}.

\medskip
In this paper we extend the EKP inequality to topologically transitive countable state Markov shifts. The new features here are non-compactness, the possibility of escape of mass to infinity, and ``phase transitions": non-analytic pressure functions.

\medskip
Of course, the EKP inequality cannot be expected to hold for all countable Markov shifts.
For \eqref{weak-EKP} or \eqref{EKP-ineq} to make sense, we must
at the very least assume that a measure of maximal entropy $\mu_0$ exists, and that $h_{\mu_0}(\sigma)<\infty$.

A deeper observation, due to S.\ Ruette, is that even under these additional assumptions, \eqref{weak-EKP} and \eqref{EKP-ineq} may fail.
It follows from \cite{Ruette} (see also \cite{Gurevich-Savchenko}, \cite{Gurevich-Zargaryan}),
that if \eqref{weak-EKP} or \eqref{EKP-ineq} hold, then
 $\sigma:\Sigma^+\to\Sigma^+$ must be {\em strongly positively recurrent (SPR)}, a condition whose definition is recalled in \S\ref{s.SPR-def}. By the work of Gurevich \cite{Gurevich-Measures-Of-Maximal-Entropy}, topologically transitive SPR countable Markov shifts have a unique measure of maximal entropy.

Thus the right context for studying effective intrinsic ergodicity for countable Markov shifts is the class of topologically transitive SPR shifts.

G.\ Iommi, M.\ Todd and A.\ Velozo have studied the semi-continuity properties of the entropy map for countable Markov shifts  \cite{Iommi-Todd-Velozo-USC}, and proved in \cite[Theorem~8.12]{iommi2019escape} that all SPR topologically transitive countable state Markov shifts satisfy \eqref{weak-EKP}.
Our main result is: {\em All SPR topologically transitive countable state Markov shifts satisfy the EKP inequality \eqref{EKP-ineq}.}

Our proof is different from Kadyrov's, and it produces sharper bounds.
We show in Theorem~\ref{t.optimal-EKP} that for every $\eps>0$ and a H\"older continuous $\psi:\Sigma^+\to\R$, there exists a $\delta>0$ such that if $h_\mu(\sigma)$ is $\delta$-close to $h_{\mu_0}(\sigma)$, then
\begin{equation}\label{sharp-EKP}
\left|\int\psi d\mu_0-\int\psi d\mu\right|\leq e^\eps\sqrt{2}\sigma_{\mu_0}(\psi)\sqrt{h_{\mu_0}(\sigma)-h_\mu(\sigma)}.
\end{equation}
Here $\sigma_{\mu_0}^2(\psi)$ is the {\em asymptotic variance} of $\psi$ with respect to $\mu_0$, see \eqref{e.asymp-var}.
The square root and the constant $\sqrt{2}\sigma_{\mu_0}(\psi)$ are sharp (Theorem~\ref{t.optimal-EKP}).
The sharpness  of the square root is an answer to a question of Kadyrov  \cite[p.\ 240]{Kadyrov-Effective-Uniqueness}. It is instructive to compare \eqref{EKP-ineq} and \eqref{sharp-EKP} in the special case when $\psi\not\equiv 0$, and $\psi=u-u\circ\sigma+\text{const}$ with $u$ bounded and measurable. In this case the right-hand-side of \eqref{sharp-EKP} is zero (which is sharp), and the right-hand-side of \eqref{EKP-ineq} is positive (which is not sharp).

The measure of maximal entropy $\mu_0$  is the equilibrium measure of the zero potential. Our results extend to equilibrium measures $\mu_\phi$ of other potentials $\phi$.
Suppose $\Sigma^+$ has finite Gurevich entropy, $\sup\phi<\infty$ and  $\phi$ is weakly H\"older continuous (see \S\ref{s.setup}). In Theorem~\ref{t.suboptimal-EKP}, we show that if $\phi$ is SPR in the sense of  \cite{Sarig-CMP-2001}, then for every shift invariant probability measure $\mu$ and  $\beta$-H\"older continuous function $\psi$,
\begin{equation}\label{EKP-ineq-pressure}
\left|\int\psi d\mu-\int\psi d\eqp\right|\leq C_{\phi,\beta}\|\psi\|_\beta\sqrt{P_{\eqp}(\phi)-P_\mu(\phi)},
\end{equation}
where  $P_\nu(\phi):=h_\nu(\sigma)+\int\phi d\nu$. A sharp version similar to \eqref{sharp-EKP} holds as well.

The SPR property is a necessary condition for \eqref{EKP-ineq-pressure}: In Corollary~\ref{cor.EKP-SPR}, we show that   \eqref{EKP-ineq-pressure} fails whenever $\phi$ is not SPR. The  case $\phi\equiv 0$ follows from \cite{Ruette}.

\medskip
Let us compare  Kadyrov's proof to our proof.
Kadyrov's proof is better  in two ways:
It is much shorter, and it yields a finitary version of \eqref{EKP-ineq} with $\frac{1}{n}H_{\mu}(\alpha_0^{n-1})$ replacing $h_\mu(\sigma)$, see \cite{kadyrov2017effective}.
The reader  may wonder why we needed a different proof.
One reason is that our proof gives  sharper, optimal, bounds. But there is another reason, of a more technical nature, which we  would like to explain.

Assume $\sigma:\Sigma^+\to\Sigma^+$ is topologically mixing, and let $\widehat{T}$ denote the transfer operator\footnote{
In the notation of  \S\ref{s.TDF-for-SPR} in this paper,  $\widehat{T} f=\lambda_0^{-1}M_{h_0}^{-1}L_0 M_{h_0}$.
} of the measure of maximal entropy. Let $(\mathcal H_\beta,\|\cdot\|_\beta)$ denote the space of $\beta$-H\"older continuous functions as defined in \eqref{eq:H-beta}.
In the case of finite alphabets, $\widehat{T}$ has spectral gap when acting on   $(\mathcal H_\beta,\|\cdot\|_\beta)$, and since
$
\|\cdot\|_{\beta}\geq \|\cdot\|_\infty
$, it follows that
$
\|\widehat{T}^n f-\int f d\mu_0\|_{\infty}\to 0 \text{ exponentially fast}.
$
This exponential {\em uniform} convergence seems to us to  be crucial for Kadyrov's proof, see  \cite[pp.\ 244-245]{Kadyrov-Effective-Uniqueness}.

But in the case of an infinite alphabet, we cannot expect uniform exponential convergence like that for all H\"older continuous functions, even in the SPR case. Let $\mathfs G$ be the graph associated to the shift (see \S\ref{s.setup}), and suppose every vertex in $\mathfs G$ has finite  degree. If $f$ is the indicator of a cylinder, then $\widehat{T}^n f$ vanishes outside a finite union of partition sets (which depends on $n$). So $\|\widehat{T}^n f-\int f d\mu_0\|_\infty\geq |\int f d\mu_0|$ for all $n$, and  $\|\widehat{T}^n f-\int f d\mu_0\|_\infty\not\to 0$.

\medskip
\noindent
This is the obstacle that forced us to seek a different proof.

\medskip
\noindent
There is an important  class of countable Markov shifts which do have Banach spaces with spectral gap so that $\|\cdot\|_{\mathcal L}\geq\|\cdot\|_\infty$:  The shifts with the {\em big images and pre-images (BIP) property} (\cite{Aaronson-Denker}, \cite{Sarig-Gibbs}, see also \S\ref{s.eq}, Example 2). In the infinite alphabet BIP case, all measures of maximal entropy  have infinite entropy, and  there is no hope to get \eqref{EKP-ineq}.  But there may still be unbounded  potentials  with equilibrium measures with finite pressure.  For the EKP inequality for such measures, in the form \eqref{EKP-ineq-pressure}, see \cite{Ruhr-BIP}.

\section{A review of the theory of topological Markov shifts}\label{s.setup}
\subsection{Topological Markov shifts}\label{s.TMS} Let $\mathfs G$ denote a countable directed graph with set of vertices $S$ and set of directed edges $E$. If there is an edge from $a$ to $b$, we write $a\to b$.
Set
$$
\Sigma^+=\Sigma^+(\mathfs G):=\{\un{x}=(x_0,x_1,\ldots): x_i\in S,  x_i\to x_{i+1}\text{ for all }i\}.
$$
For every $\un{x}\neq \un{y}$ in $\Sigma^+$, let $t(\un{x},\un{y})=\min\{i:x_i\neq y_i\}$.  We  equip $\Sigma^+$ with the metric
\begin{equation}\label{eq.metric}
d(\un{x},\un{y}):=\begin{cases}
0 & \un{x}=\un{y}\\
\exp[-t(\un{x},\un{y})] & \text{otherwise}.
\end{cases}
\end{equation}
\begin{definition}
The topological dynamical system $\sigma:\Sigma^+\to\Sigma^+$ given by
$ \sigma(\un{x})_{i}=x_{i+1}
$  is called the {\em one-sided topological Markov shift (TMS)} associated to $\mathfs G$. The elements of $S$ are called {\em states}, and $\sigma$ is called the {\em left shift}.
\end{definition}
\noindent
When $|S|=\aleph_0$, we will also call $\Sigma^+$ a {\em countable state Markov shift}.

The sets
$[\un{a}]=
[a_0,\ldots,a_{n-1}]:=\{\un{x}\in\Sigma^+:x_i=a_i\ (i=0,\ldots,n-1)\}
$
$(\un{a}\in\bigcup_{n}S^n)$ are called {\em cylinders} of length $n$. Cylinders of length one are also called {\em partition sets}.
The cylinders form a basis for the topology, and they generate the Borel $\sigma$-algebra, which we denote by $\mathfs B$.

\subsection{Topological transitivity and topological mixing}\label{s.spectral-decomposition}
We write $a\xrightarrow[]{n}b$ when there is a non-empty cylinder of the form $[a,\xi_1,\ldots,\xi_{n-1},b]$. In particular  $a\xrightarrow[]{1}b\Leftrightarrow a\to b$. The following simple facts are well-known:
\begin{enumerate}[(1)]
\item $\sigma:\Sigma^+\to\Sigma^+$ is topologically transitive if and only if $\forall a,b\in S$ $\exists n$ such that $a\xrightarrow[]{n}b$. Equivalently, $\mathfs G$ is {\em strongly connected}: $\forall(a,b)\in S^2$, there is  a path from $a$ to $b$.
\item   If $\sigma:\Sigma^+\to\Sigma^+$ is topologically transitive, then
    $
    p_a:=\gcd\{n: a\xrightarrow[]{n}a \}
    $ $(a\in S)$
are all equal to the same value $p\geq 1$, and $p$ is called the {\em period} of $\Sigma^+$.

\item $\sigma:\Sigma^+\to\Sigma^+$ is topologically mixing if and only if it is topologically transitive, and its period is equal to one.

\item {\bf The spectral decomposition:}  Suppose $\Sigma^+$ is a topologically transitive TMS with period $p>1$, then we can decompose
    $
    \Sigma^+=\Sigma_0^+\uplus \Sigma_1^+\uplus\cdots\uplus\Sigma_{p-1}^+
    $
    where $\sigma(\Sigma_i^+)=\Sigma_{i+1\,\mathrm{mod}\, p}^+$, and where $\sigma^p:\Sigma_i^+\to\Sigma_i^+$ are all topologically conjugate to a topologically {\em mixing} countable Markov shift.

    Briefly, this is done as follows.  There is an equivalence relation on the states of $\Sigma^+$ given by  $a\sim b\Leftrightarrow a=b\text{ or }a\xrightarrow{n}b$ for some $n$ divisible by $p$. There are $p$ equivalence classes $S_0,\ldots, S_{p-1}$, and $\Sigma_i^+:=\{\un{x}\in\Sigma^+: x_0\in S_i\}$. The map $\sigma^p:\Sigma_i^+\to\Sigma_i^+$ is topologically conjugate to the TMS with set of states $\{[a_0,\ldots,a_{p-1}]:a_0\in S_i\}\setminus\{\emptyset\}$ and edges  $[\un{a}]\to[\un{b}]$ when $a_{p-1}\to b_0$, and this TMS is topologically mixing.
\end{enumerate}
\noindent
The spectral decomposition is a tool for reducing statements on topologically transitive TMS to the topologically mixing case. We will use this tool frequently.


\subsection{Weak H\"older continuity and summable variations}
The  {\em $n^{\mathrm{th}}$ oscillation} (aka the {\em $n^{\mathrm{th}}$ variation}) of a function $\phi:\Sigma^+\to\R$ is
$$
\var_n(\phi):=\sup\{|\phi(\un{x})-\phi(\un{y})|:x_i=y_i \ (i=0,\ldots,n-1)\}.
$$

A function  $\phi:\Sigma^+\to\R$ is called {\em $\theta$-weakly H\"older continuous} if $\theta\in (0,1)$ and there exists $A>0$ such that $\var_n(\phi)\leq A\theta^n$ for all $n\geq 2$.
This condition does not imply that $\phi$ is bounded, and the choice $n\geq 2$ is done to include all functions of the form $\phi(\un{x})=\phi(x_0,x_1)$. A bounded $\theta$-weakly H\"older continuous is H\"older continuous  with exponent $\beta:=-\log\theta$ with respect to the metric \eqref{eq.metric}. We define the space of such functions in \eqref{eq:H-beta}.

Some of our results hold under the following weaker regularity assumption, called {\em summable variations}: $\sum_{n\geq 2}\var_n(\phi)<\infty$.

\subsection{Pressure and equilibrium measures}\label{s.pressure}
Suppose $\Sigma^+$ is topologically mixing,  $\phi:\Sigma^+\to\R$ has summable variations and let $\phi_n:=\sum_{k=0}^{n-1}\phi\circ\sigma^k$. Given $a\in S$, let
$$
P_G(\phi):=\lim_{n\to\infty}\frac{1}{n}\log Z_n(\phi,a),\text{ where }Z_n(\phi,a):=\sum_{\sigma^n(\un{x})=\un{x}}e^{\phi_n(\un{x})}1_{[a]}(\un{x})\label{Z_n}.
$$
The limit exists and is independent of $a$, see \cite{Sarig-ETDS-99}.\footnote{This reference states the result under stronger regularity assumptions on $\phi$, but the proofs there work verbatim for functions with  summable variations.\label{footnote-2}} It is always bigger than $-\infty$, but it could be equal to $+\infty$.
\begin{definition}
 $P_G(\phi)$ is called the  {\em Gurevich pressure} of $\phi$. The {\em Gurevich entropy} of $\Sigma^+$ is
$\displaystyle
h:=P_G(0)=\lim_{n\to\infty}\frac{1}{n}\log\#\{\un{x}\in\Sigma^+:x_0=a\ , \sigma^n(\un{x})=\un{x}\}
$.
\end{definition}

Let $\mathfs M(\Sigma^+)$ denote the collection of all $\sigma$-invariant Borel probability measures on $\Sigma^+$, and let  $h_\mu(\sigma)$ denote the metric entropy of $\mu\in\mathfs M(\Sigma^+)$.
{\em Gurevich's variational principle} \cite{Gurevich-Entropy} says that if $\Sigma^+$ is topologically mixing, then
$$
h:=P_G(0)=\sup\left\{h_\mu(\sigma):\mu\in\mathfs M(\Sigma^+)\right\}.
$$
Any measure which achieves the supremum is called a {\em measure of maximal entropy}. Such measures do not always exist, but if they do, and if $\Sigma^+$ is topologically transitive,  then they are unique \cite{Gurevich-Measures-Of-Maximal-Entropy}. We will describe the structure of the measure of maximal entropy in the following section.

The Gurevich pressure of a general $\phi$ with summable variations satisfies a similar  variational principle, which we now explain.

 A measurable function $\phi$ is called {\em one-sided $\mu$-integrable} if at least one of the integrals $\int_{[\phi>0]}\phi d\mu,
\int_{[\phi<0]}\phi d\mu$ is finite. Let
$$
\mathfs M_\phi(\Sigma^+):=\left\{\mu\in\mathfs M(\Sigma^+):\begin{array}{l}
\text{$\phi$ is one-sided $\mu$-integrable, and }\\
\text{$(h_\mu(\sigma),\int\phi d\mu)\neq (+\infty,-\infty)$}
\end{array}
\right\}.
$$
This is the collection of  $\mu \in \mathfs M(\Sigma^+)$ for which the  expression
$$
P_\mu(\phi):=h_\mu(\sigma)+\int\phi d\mu,
$$
is well-defined (we allow $P_\mu(\phi)=\pm\infty$ but forbid $P_\mu(\phi)=\infty-\infty$).

In this paper we will be mostly interested in the case when  $\Sigma^+$ has finite Gurevich entropy and  $\sup\phi<\infty$. In this case $P_\mu(\phi)$ is well-defined for all $\mu\in \mathfs M(\Sigma^+)$, and $\mathfs M_\phi(\Sigma^+)=\mathfs M(\Sigma^+)$.

The {\em variational principle} for TMS   states that if $\Sigma^+$ is topologically mixing and if $\phi$ has summable variations, then
\begin{equation}\label{eq:measure_pressure}
	P_G(\phi)=\sup\left\{ P_\mu(\phi):\mu\in\mathfs M_\phi(\Sigma^+) \right\}.
\end{equation}
See \cite{Sarig-ETDS-99} for the special case  $\sup\phi<\infty$,
and \cite{Iommi-Jordan-Todd-Suspension-Flows} in general.

A measure $\mu\in\mathfs M_\phi(\Sigma^+)$ which achieves the supremum in \eqref{eq:measure_pressure} is called  an {\em equilibrium measure for the  potential} $\phi$. The equilibrium measures for the constant potential are the measures of maximal entropy.

\medskip
So far we have only discussed the  topologically mixing case. In the topologically transitive case, with period $p$, we define
$$
P_G(\phi):=(1/p)P_G(\phi_p|_{\Sigma_0^+}),
$$ where $\phi_p:=\sum_{i=0}^{p-1}\phi\circ\sigma^i
$, and $\Sigma_0^+$ is some (any) of the components in the spectral decomposition of $\Sigma^+$.

With this definition, the variational principle holds, and $m_0$ is an equilibrium measure for $\phi_p|_{\Sigma_0^+}$ if and only if $m:=\frac{1}{p}\sum_{i=0}^{p-1}m_0\circ\sigma^{-i}$ is an equilibrium measure for $\phi$. Furthermore,  in this case $m_0:=m(\cdot|\Sigma_0^+)$, the conditional measure on $\Sigma^+_0$. For more details, see the end of the proof of Theorem \ref{t.optimal-EKP}.

\subsection{Existence and  structure of equilibrium measures}\label{s.eq}
The topic is intimately related to the eigenvector problem for {\em Ruelle's operator}
\begin{equation}\label{eq:Ruelle-Operator}
(L_\phi f)(\un{x}):=\sum_{\sigma(\un{y})=\un{x}}e^{\phi(\un{y})}f(\un{y}).
\end{equation}
We recall the connection (\cite{Bowen-LNM,Sarig-Null,Buzzi-Sarig}).

 Fix a state $a\in S$ and $\un{x}\in \bigcup_{n>0}\sigma^{-n}[a]$, let  $\tau_a(\un{x}):=1_{[a]}(\un{x})\min\{n>0: x_n=a\}$.  Given a function $\phi:\Sigma^+\to\R$, let
$ \displaystyle Z_n^\ast(\phi,a):=\sum_{\sigma^n(\un{x})=\un{x}}e^{\phi_n(\un{x})}1_{[\tau_a=n]}(\un{x})$.
Recall that  $\phi_n=\sum_{i=0}^{n-1}\phi\circ\sigma^i$, and  $\displaystyle Z_n(\phi,a):=\sum_{\sigma^n(\un{x})=\un{x}}e^{\phi_n(\un{x})}1_{[a]}(\un{x}).$
\begin{definition}
Suppose $\phi$ is a function with summable variations and finite Gurevich pressure on a topologically mixing TMS $\Sigma^+$. We say that $\phi$ is {\em positively recurrent}, if for some state $a$,
$$
\sum_{n=1}^\infty \lambda^{-n}Z_n(\phi,a)=\infty\text{ and }
\sum_{n=1}^\infty n\lambda^{-n}Z_n^\ast(\phi,a)<\infty,\text{ where }\lambda:=\exp P_G(\phi).
$$
\end{definition}
\noindent
(This should not be confused with {\em strong} positive recurrence, a condition that is discussed in the next section.)

The \textit{Generalized Ruelle's Perron-Frobenius theorem} \cite{Sarig-Null}\footnote{See the footnote on page \pageref{footnote-2}.} states that if $\Sigma^+$ is a topologically mixing TMS and $\phi$ has summable variations and finite Gurevich pressure, then $\phi$ is positively recurrent if and only if  there is a positive continuous function $h_\phi:\Sigma^+\to \R_+$ and a $\sigma$-finite measure $\nu_\phi$
such that
\begin{equation}
\begin{aligned}
	\label{def:GRPF}
&	L_\phi h_\phi=e^{P_G(\phi)}h_\phi, \quad L_\phi^*\nu_\phi =e^{P_G(\phi)}\nu_\phi, \quad \int h_\phi d\nu_\phi=1\text{ and }\\
&e^{-n P_G(\phi)}(L_\phi^n 1_{[\un{a}]})(\un{x})\xrightarrow[n\to\infty]{}h_\phi(\un{x})\nu_{\phi}[\un{a}]\text{ pointwise for all cylinders $[\un{a}]$}.
\end{aligned}
\end{equation}
In this case $h_\phi$ is continuous,  bounded away from zero and infinity on partition sets, and $\nu_\phi$ gives finite and positive measure to every non-empty cylinder.
The measure $m_\phi$ defined as $dm_\phi= h_\phi d\nu_\phi$ turns out to be a $\sigma$-invariant probability measure, and is called the \textit{Ruelle-Perron-Frobenius (RPF) measure}.

\begin{theorem}[\cite{Buzzi-Sarig}, \cite{Cyr-Sarig}]\label{t.eq-measure}
Let $\phi$ be a potential with summable variations and finite Gurevich pressure on a topologically mixing TMS, and suppose $\sup\phi<\infty$.
\begin{enumerate}[(1)]
\item If $\phi$ admits an equilibrium measure $m$, then this measure is unique, $\phi$ is positively recurrent, and $m$ equals the RPF measure of $\phi$.
\item Conversely, if $\phi$ is positively recurrent and the RPF measure $m_\phi$ has finite entropy, then $m_\phi$ is the unique equilibrium measure of $\phi$.
\end{enumerate}
\end{theorem}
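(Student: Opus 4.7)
My plan is to reduce both statements to the well-understood theory of Gibbs/equilibrium measures on \emph{full} shifts via the standard inducing construction. Fix a state $a\in S$ and consider the first-return map $\bar\sigma(\un{x}):=\sigma^{\tau_a(\un{x})}(\un{x})$ on $[a]$. Because $\mathfs G$ is strongly connected, the system $\bar\sigma:[a]\to[a]$ is topologically conjugate to the full shift on the countable alphabet of admissible loops $(a,\xi_1,\ldots,\xi_{n-1},a)$ with $\xi_j\neq a$, and the induced potential $\bar\phi:=\sum_{k=0}^{\tau_a-1}\phi\circ\sigma^k$ inherits summable variations. The induced shift has the BIP property, so I can invoke the Aaronson--Denker/Sarig--Gibbs theory as a black box: a summable-variations potential on such a full shift admits a unique equilibrium measure iff it has a positive RPF eigendata pair, and then the equilibrium measure is the associated Gibbs measure.

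For part~(2), given positive recurrence, \eqref{def:GRPF} yields $h_\phi$ and $\nu_\phi$, and the identity $L_\phi((f\circ\sigma)g)=f\cdot L_\phi g$ combined with $L_\phi^*\nu_\phi=\lambda\nu_\phi$ verifies that $m_\phi$ defined by $dm_\phi=h_\phi\,d\nu_\phi$ is a $\sigma$-invariant probability. I would then promote the pointwise convergence $\lambda^{-n}L_\phi^n 1_{[\un{a}]}\to h_\phi\,\nu_\phi[\un{a}]$ in \eqref{def:GRPF} to uniform Gibbs-type bounds on finite unions of induced cylinders, showing that $\bar m:=m_\phi(\cdot\mid [a])$ is the Gibbs measure of the normalized induced potential $\bar\phi-P_G(\phi)\tau_a$, whose Gurevich pressure on the induced shift is $0$. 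The full-shift theory then delivers
\[
h_{\bar m}(\bar\sigma)+\int\bar\phi\,d\bar m-P_G(\phi)\int\tau_a\,d\bar m=0.
\]
Abramov's entropy formula gives $h_{\bar m}(\bar\sigma)=h_{m_\phi}(\sigma)/m_\phi[a]$, Kac's formula gives $\int\tau_a\,d\bar m=1/m_\phi[a]$, and a direct computation gives $\int\bar\phi\,d\bar m=\int\phi\,dm_\phi/m_\phi[a]$; multiplying through by $m_\phi[a]$ yields $P_{m_\phi}(\phi)=P_G(\phi)$, so $m_\phi$ is an equilibrium measure as soon as its entropy is finite. Uniqueness is then inherited from uniqueness of the Gibbs measure downstairs.

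For part~(1), let $m$ be an equilibrium measure. Using $\sup\phi<\infty$ and the affinity of $\mu\mapsto P_\mu(\phi)$ on the simplex of invariant measures, together with the ergodic decomposition of entropy, I may assume $m$ is ergodic, whence $m[a]>0$ for some state $a$. Inducing on $[a]$ and running Abramov--Kac in the other direction shows that $\bar m$ realizes the variational principle for $\bar\phi-P_G(\phi)\tau_a$ on the induced full shift; by the full-shift theory it is the unique Gibbs measure there, and its Gibbs constants reassemble into a positive eigenfunction $h$ and a conformal eigenmeasure $\nu$ of $L_\phi$ on all of $\Sigma^+$, the eigen-equations $L_\phi h=\lambda h$ and $L_\phi^*\nu=\lambda\nu$ being what propagate the data off $[a]$. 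The finiteness $\int\tau_a\,d\bar m=1/m[a]<\infty$ is precisely the convergence of the second series in the definition of positive recurrence, while divergence of the first follows from recurrence of $m$ on $[a]$. The principal technical obstacle throughout is the two-way translation between $\Sigma^+$ and the induced shift: upgrading the pointwise eigendata of \eqref{def:GRPF} to uniform Gibbs bounds on the induced cylinders, controlling integrability of the unbounded return time $\tau_a$, and reconstructing $h_\phi$ and $\nu_\phi$ globally on $\Sigma^+$ from information available only on $[a]$.
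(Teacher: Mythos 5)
The paper does not prove this theorem; it is cited to \cite{Buzzi-Sarig} and \cite{Cyr-Sarig} as background, so there is no in-paper proof to measure you against. That said, your inducing strategy---pass to the full shift of first-return loops over a cylinder of positive measure, invoke the BIP Gibbs theory there (Aaronson--Denker, Sarig, Mauldin--Urba\'nski), and move between the two levels with Abramov and Kac---is precisely the route taken in \cite{Buzzi-Sarig}, and the plan is correct at the level you have sketched it. The ``principal technical obstacles'' you flag at the end (upgrading the pointwise RPF convergence in \eqref{def:GRPF} to two-sided Gibbs estimates on induced cylinders, controlling $\int\tau_a$, and reconstructing $(h_\phi,\nu_\phi)$ globally from the induced Gibbs data) are exactly the content of that paper, not incidental details, so your outline is a faithful reduction rather than a shortcut.

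Two small points of precision. First, when you say the divergence of $\sum_n\lambda^{-n}Z_n(\phi,a)$ ``follows from recurrence of $m$ on $[a]$,'' what is actually used is not Poincar\'e recurrence but the Vere-Jones renewal identity: once you know $\bar m$ is a Gibbs \emph{probability} measure for the induced potential $\bar\phi-P_G(\phi)\tau_a$, the Gibbs constants give $F(1)=\sum_n\lambda^{-n}Z_n^\ast(\phi,a)=1$, and then the identity between the generating functions of $Z_n$ and $Z_n^\ast$ yields $\sum_n\lambda^{-n}Z_n(\phi,a)=\infty$; similarly $\int\tau_a\,d\bar m=1/m[a]<\infty$ translates into $\sum_n n\lambda^{-n}Z_n^\ast(\phi,a)<\infty$ only after the Gibbs estimates are in hand. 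Second, for uniqueness you cannot in advance compare two equilibrium measures by inducing on a single $a$ (one might vanish on $[a]$); the clean way is your Part (1): any equilibrium measure forces positive recurrence, hence the RPF measure $m_\phi$ exists and has full support, and then inducing on a state charged by both measures identifies them with the unique induced Gibbs measure. With these clarifications your proposal matches the standard proof.
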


One obtains the following corollary, which is due to Gurevich \cite{Gurevich-Measures-Of-Maximal-Entropy} for $\phi=0$.
\begin{corollary}\label{cor.PR-eq-measure}
Let $\phi$ be a potential with summable variations and finite Gurevich pressure on a topologically mixing TMS with finite Gurevich entropy, and suppose $\sup\phi<\infty$. Then $\phi$ has an equilibrium measure if and only if $\phi$ is positively recurrent. In this case, the equilibrium measure is unique.
\end{corollary}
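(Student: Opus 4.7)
The plan is to deduce this corollary directly from Theorem~\ref{t.eq-measure} together with the two standing hypotheses (finite Gurevich entropy $h<\infty$ and $\sup\phi<\infty$), which are what reduce the biconditional in that theorem to the cleaner one stated here. The only genuine work is checking that in the present setting the RPF measure automatically has finite entropy, which is what removes the finite-entropy proviso from part~(2) of Theorem~\ref{t.eq-measure}.

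For the forward direction, suppose $\phi$ admits an equilibrium measure $m$. Since $\sup\phi<\infty$, Theorem~\ref{t.eq-measure}(1) applies verbatim: $m$ is unique, $\phi$ is positively recurrent, and $m=m_\phi$ is the RPF measure. No additional argument is needed.

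For the reverse direction, suppose $\phi$ is positively recurrent. The Generalized Ruelle--Perron--Frobenius theorem produces a positive continuous eigenfunction $h_\phi$ and an eigenmeasure $\nu_\phi$ satisfying \eqref{def:GRPF}. The normalization $\int h_\phi\,d\nu_\phi=1$ shows that $dm_\phi=h_\phi\,d\nu_\phi$ is a \emph{probability} measure on $\Sigma^+$. The key step is now to invoke Gurevich's variational principle,
\begin{equation*}
h_{m_\phi}(\sigma)\leq \sup\{h_\mu(\sigma):\mu\in\mathfs M(\Sigma^+)\}=h<\infty,
\end{equation*}
which uses precisely the finite Gurevich entropy hypothesis. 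With $h_{m_\phi}(\sigma)<\infty$ in hand, the second clause of Theorem~\ref{t.eq-measure} delivers that $m_\phi$ is the (unique) equilibrium measure for $\phi$.

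The only substantive point worth flagging is this finite-entropy step: without the hypothesis $h<\infty$, one cannot exclude pathological positively recurrent potentials whose RPF measure has infinite entropy and hence is \emph{not} an equilibrium measure (indeed the BIP discussion in the introduction shows this happens for $\phi\equiv 0$ on BIP shifts of infinite alphabet). Once $h<\infty$ is assumed, this obstacle disappears uniformly for every $\mu\in\mathfs M(\Sigma^+)$, and the corollary reduces to an essentially bookkeeping application of Theorem~\ref{t.eq-measure}.
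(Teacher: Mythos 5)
Your proof is correct and follows the route the paper intends: the corollary is stated with no proof precisely because it is the immediate consequence of Theorem~\ref{t.eq-measure} once one notes that, under the hypothesis $h<\infty$, Gurevich's variational principle forces $h_{m_\phi}(\sigma)\leq h<\infty$, so the finite-entropy proviso in part~(2) is automatic. Both directions and the uniqueness claim are handled exactly as the paper's logical structure requires.
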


An important consequence of the description of the equilibrium measure as the RPF measure is the following identity for  conditional probabilities \cite{Ledrappier-g-measures,Walters-g-measures}:

\begin{equation}\label{eq.DLR}
\E_{m_\phi}(f|\sigma^{-n}\mathfs B)=\lambda^{-n}(h_\phi^{-1}L_\phi^n(h_\phi f))\circ\sigma^n\ \ \ m_\phi\text{-a.e.}
\end{equation}

\subsection*{Example 1 (SFT)} In the finite alphabet case (when $\Sigma^+$ is a topologically mixing subshift of finite type), every $\phi$ with summable variations is  positively recurrent and the RPF measure is always an equilibrium measure. This is a consequence of {\em Ruelle's Perron-Frobenius theorem}, see \cite{Bowen-LNM}.

\subsection*{Example 2 (BIP)} A topologically mixing Markov shift is said to have the {\em big images and pre-images (BIP) property} if there is a finite set of states $b_1,\ldots,b_N$ such that for every state $a$ there are some edges $a\to b_i, b_j\to a$. On a TMS with the BIP property, every $\phi$ with summable variations such that $\var_1(\phi)<\infty$ and $P_G(\phi)<\infty$ is positively recurrent \cite{Sarig-Gibbs}, \cite{Mauldin-Urbanski}. The corresponding RPF measure is an equilibrium measure if and only if it belongs to $\mathfs M_\phi(\Sigma^+)$.

Sometimes this is not the case.
Fix a probability vector $\vec{p}=(p_i)_{i\in\N\cup\{0\}}$ with infinite entropy, and  suppose $\Sigma^+=\N^{\N\cup\{0\}}$ and $\phi(\un{x})=\log p_{x_0}$. Clearly, $P_G(\phi)=0$,  $\Sigma^+$ has the  BIP property, and the RPF measure is the Bernoulli measure $\mu$ with probability vector $\vec{p}$. For this measure $h_\mu(\sigma)+\int\phi d\mu$ is not well-defined, because  $h_\mu(\sigma)=+\infty$ and $\int\phi d\mu=-\infty$.

\subsection*{Example 3 (Measures of maximal entropy \cite{Parry-Intrinsic-MC}, \cite{Gurevich-Measures-Of-Maximal-Entropy}).}
Returning to the countable alphabet case, let's consider the special case of measures of maximal entropy, $\phi\equiv 0$.
Let  $h:=P_G(0)$ be the Gurevich entropy, and set $\lambda:=\exp(h)$.

It is not difficult to check using the identity $h_\phi=\nu_\phi[a]^{-1}\lim_{n\to\infty} \lambda^{-n}L_\phi^n 1_{[a]}$ that if $\phi\equiv 0$,
  then  $h_\phi(\un{x})$ depends only on $x_0$,
  so there is a positive vector $\vec{\ell}=(\ell_a)_{a\in S}$ such that
$h_\phi(\un{x})=\ell_{x_0}$. Since $L_\phi h_\phi=\lambda h_\phi$,
$
\sum_{a\in S}  \ell_a T_{ab} =\lambda \ell_b
$, where $T_{ab}=1$ when $a\to b$ and $T_{ab}=0$ if $a\not\to b$.
So $\vec{\ell}$ is a left eigenvector of the transition matrix $T$.
Similarly, the vector $\vec{r}=(r_a)_{a\in S}$ given by $r_a:=\nu_\phi[a]$ is a right eigenvector of $T$, because
$r_a=\nu_\phi[a]=\lambda^{-1}\nu_\phi(L_\phi 1_{[a]})=\lambda^{-1}\nu_\phi(\sigma[a])=\lambda^{-1}\sum_{b\in S} T_{ab} r_b$.

 Let $m_\phi=h_\phi \nu_\phi$ be the RPF measure.

 We write $m_\phi[x_0,\dots,x_n]=m_\phi([x_0,\dots,x_n])$,
 $m_\phi(x_0| x_1,\dots, x_n) = \frac{m_\phi([x_0,\dots,x_n])}{m_\phi(\sigma^{-1}[x_1,\dots,x_n])}$
 and
 $m_\phi(x_0|x_1,\dots):=\E_{m_\phi}(1_{[x_0]}|\sigma^{-1}\mathfs{B})(\un x)$.
 Substituting $f=1_{[x_i]}$, $n=1$ in  \eqref{eq.DLR} and evaluating at $\sigma^i(\un x)$, we obtain
$$
m_\phi(x_i|x_{i+1},\ldots)=\frac{T_{x_i, x_{i+1}}\ell_{x_{i}}}{\lambda \ell_{x_{i+1}}}\ \ m_\phi\text{-a.e.}
$$
Call the value on the right-hand side $q_{x_i,x_{i+1}}$. Since it is  independent of $x_j$ for $j>i+1$, $
m_\phi(x_i|x_{i+1},\ldots,x_{n-1})=q_{x_i,x_{i+1}}
$ for all $n>i+1$.
Thus, by the shift invariance of $m_\phi$,
\begin{equation}\label{ratio}
\frac{m_\phi[x_i,x_{i+1},\ldots,x_{n-1}]}{m_\phi[x_{i+1},\ldots,x_{n-1}]}=m_\phi(x_i|x_{i+1},\ldots,x_{n-1})=q_{x_i,x_{i+1}} .
\end{equation}
Taking the product of \eqref{ratio} over $0\leq i\leq n-2$, we
obtain
$$
m_\phi[x_0,\ldots,x_{n-1}]=q_{x_0,x_1}\cdots q_{x_{n-2},x_{n-1}}m_\phi[x_{n-1}].
$$
Equivalently, if $p_a:=m_\phi[a]\equiv\ell_a r_a$, and $p_{ab}:=q_{ab}\frac{m_\phi[b]}{m_\phi[a]}\equiv \frac{T_{ab}r_b}{\lambda r_a}$, then
\begin{equation}\label{eq.Parry}
m_\phi[x_0,\ldots,x_{n-1}]=p_{x_0} p_{x_0 x_1}\cdots p_{x_{n-2} x_{n-1}}\ \ \text{(``Parry's measure")}.
\end{equation}

\subsection{Strong positive recurrence}\label{s.SPR-def}
This is a strengthening of the positive recurrence condition, which implies that $L_\phi$ acts quasi-compactly on a ``sufficiently rich" Banach space of functions (see the next section).

We begin with the special case when $\Sigma^+$ is topologically mixing, and $\phi\equiv 0$.
 Fix a state $a$, and let $f_n(a)$ denote the number of first return loops with length $n$, at  $a$:
$$f_n(a):=\#\{\un{x}\in\Sigma^+: \sigma^n(\un{x})=\un{x},\; x_0=a,\;  x_1,\ldots,x_{n-1}\neq a\}.$$ Let $F_a(z):=\sum_{n\geq 1}f_n(a) z^n$ be the associated generating function.

\begin{definition}
A topologically mixing TMS $\Sigma^+$ with finite Gurevich entropy is called {\em strongly positively recurrent (SPR)}, if for some state $a$,  $F_a(R_a)>1$ where  $R_a:=$ radius of convergence of $F_a(z)$.
\end{definition}
\noindent
 It follows from  \cite{Vere-Jones-Geometric-Ergodicity} that (a) SPR implies that $\phi\equiv 0$ is positively recurrent; (b) the SPR property is independent of $a$: if $F_a(R_a)>1$ for some $a$, then $F_a(R_a)>1$ for all $a$;  and (c) SPR implies that Parry's measure is an exponentially mixing Markov chain.

\medskip
The SPR condition was extended to general potentials with summable variations in \cite{Sarig-CMP-2001} (see \cite{Gurevich-Savchenko} for the special case when $\var_2(\phi)=0$, i.e.\ $\phi$ is \textit{Markovian}).

 Suppose $\Sigma^+$ is topologically mixing, and $a$ is a state. The {\em induced map on $[a]$} is the map $\sigma^{\tau_a}:[a]'\to[a]'$, where $[a]':=\{\un{x}\in [a]:x_i=a\text{ infinitely often}\}$, and $$\tau_a(\un{x}):=1_{[a]}(\un{x})\min\{n>0:x_n=a\}.$$
  This map has a  coding as a full shift: Let $
\ov{S}:=\{[a,\xi_1,\ldots,\xi_n,a]:\xi_i\neq a\}\setminus\{\emptyset\}
$, $\ov{\Sigma}^+:=\ov{S}^{\N\cup\{0\}}$ and define $\pi:\ov{\Sigma}^+\to [a]'$ by $$\pi([a,\un{\xi}_1,a],[a,\un{\xi}_2,a],\ldots):=(a,\un{\xi}_1,a,\un{\xi}_2,a,\ldots).$$ Then $\pi^{-1}\circ\sigma^{\tau_a}\circ\pi$ is the left shift on $\ov{\Sigma}^+$.
Given a function $\phi:\Sigma^+\to\R$, we let
$$
\ov{\phi}:=(\sum_{k=0}^{\tau_a-1}\phi\circ\sigma^k)\circ\pi.
$$
If $\phi$ is weakly H\"older continuous on $\Sigma^+$, then $\ov{\phi}$ is weakly H\"older continuous on $\ov{\Sigma}^+$.

\begin{definition}\label{def:Disc}
Suppose $\phi$ is a weakly H\"older continuous potential with finite Gurevich pressure on a topologically mixing TMS, and let $a$ be a state.
The {\em discriminant} of $\phi$ at state $a$  is the (possibly infinite) expression
$$\Delta_a[\phi]:=\sup\{P_G(\ov{\phi+p}):p\in\R\text{ such that }P_G(\ov{\phi+p})<\infty\}.$$
We say that $\phi$ is {\em strongly positively recurrent (SPR)} if $\Delta_a[\phi]>0$ for some $a$.
\end{definition}
\noindent
The definition extends to $\phi$ with summable variations, but some care is needed since $\ov{\phi}$ may not have summable variations in this case, see  \cite[Lemma~2]{Sarig-CMP-2001}, which guarantees that $P_G(\ov{\phi})$ is still well defined.

\begin{lemma}\label{l.p-star}
If $\Sigma^+$ is topologically mixing,  $\phi$ has summable variations, and $P_G(\phi)<\infty$, then $\phi$ is strongly positively recurrent if and only if for some state $a$,
$$
P^*_G(\phi,a):=\limsup_{n\to\infty}\frac{1}{n}\log Z_n^\ast(\phi,a) < \lim_{n\to\infty}\frac{1}{n}\log Z_n(\phi,a)=P_G(\phi).
$$
If this happens for some state, then it happens for all states.
\end{lemma}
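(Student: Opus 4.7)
\smallskip\noindent\textbf{Plan of proof.} The plan is to compute the Gurevich pressure of the induced potential $\ov{\phi+p}$ on the induced full shift $\ov\Sigma^+$ in two ways and match them. The first computation uses the first-return generating function
\[
F_a(z):=\sum_{n\geq 1}Z_n^*(\phi,a)\,z^n,\qquad R_a:=e^{-P_G^*(\phi,a)}.
\]
The elements $w=[a,\xi_1,\ldots,\xi_{n-1},a]$ of $\ov S$ are in bijection with first-return loops at $a$, and for the fixed point $\un w=(w,w,\ldots)\in\ov\Sigma^+$ one has $\ov{\phi+p}(\un w)=\phi_n(\pi(\un w))+pn$, so direct summation yields $\sum_{w\in\ov S}e^{\ov{\phi+p}(\un w)}=F_a(e^p)$. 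Since $\ov\Sigma^+$ is a full shift and $\ov{\phi+p}$ inherits summable variations from $\phi$ (see \cite[Lemma~2]{Sarig-CMP-2001}), the standard computation of Gurevich pressure for summable-variation potentials on a full shift then gives the key identity
\[
P_G(\ov{\phi+p})=\log F_a(e^p),
\]
with both sides simultaneously $+\infty$; in particular $P_G(\ov{\phi+p})<\infty$ precisely when $p\leq -P_G^*(\phi,a)$, with the boundary case depending on $F_a(R_a)$.

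The second computation uses Abramov's formula and the variational principle. For $\mu\in\mathfs M(\Sigma^+)$ with $\mu[a]>0$, the Abramov-induced measure $\wh\nu$ on $\ov\Sigma^+$ satisfies $P_{\wh\nu}(\ov{\phi+p})=(P_\mu(\phi)+p)/\mu[a]$, so
\[
P_G(\ov{\phi+p})=\sup\left\{\frac{P_\mu(\phi)+p}{\mu[a]}:\mu\in\mathfs M(\Sigma^+),\ \mu[a]>0\right\}.
\]
This supremum is positive iff $\sup_\mu P_\mu(\phi)>-p$, iff $p>-P_G(\phi)$.

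Combining: $\Delta_a[\phi]>0$ iff there exists $p$ with $0<P_G(\ov{\phi+p})<\infty$, iff $\{p:p>-P_G(\phi)\}\cap\{p:F_a(e^p)<\infty\}\neq\varnothing$, iff $P_G^*(\phi,a)<P_G(\phi)$. For the state-independence claim, a standard splicing argument based on topological mixing (inserting fixed paths $a\to b$ and $b\to a$ into loops at either state) gives bounded comparisons between the return statistics at $a$ and those at $b$, so the strict inequality $P_G^*(\phi,a)<P_G(\phi)$ holds at one state iff it holds at every state.

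The main obstacle is the identity $P_G(\ov{\phi+p})=\log F_a(e^p)$. Although $\ov\Sigma^+$ has the very favorable full-shift structure, the induced potential $\ov{\phi+p}$ can be unbounded (since $\tau_a$ is), so verifying that $\ov{\phi+p}$ has summable variations and that the full-shift pressure formula reduces to the single sum $F_a(e^p)$ requires the careful analysis carried out in \cite[Lemma~2]{Sarig-CMP-2001}.
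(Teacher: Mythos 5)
The paper's own ``proof'' of this lemma is a single sentence citing the discriminant theorem of \cite{Sarig-CMP-2001}. Your proposal unfolds the content behind that citation, and the main equivalence is organized correctly: compute $P_G(\ov{\phi+p})$ once via the first-return generating function $F_a$, and once via Abramov's formula, then match up the thresholds. Two remarks on that part. First, the claimed ``identity'' $P_G(\ov{\phi+p})=\log F_a(e^p)$ is only exact when $\ov{\phi+p}$ is constant on partition sets (the $\phi\equiv 0$ case in the paper's example); in general the induced potential oscillates on cylinders, and one only gets $|P_G(\ov{\phi+p})-\log F_a(e^p)|\leq\sum_{k\geq 2}\var_k(\ov{\phi+p})$. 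Fortunately your argument only uses the ``both sides simultaneously $+\infty$'' consequence, which survives a bounded additive error. Second, the Abramov step implicitly needs both that $\sup\{P_\mu(\phi):\mu[a]>0\}=P_G(\phi)$ and that the variational supremum on $\ov\Sigma^+$ is attained along measures with finite return time (e.g.\ via approximation by finite sub-alphabets); these are standard but worth flagging, since Abramov induction is not a bijection onto all of $\mathfs M(\ov\Sigma^+)$.

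The real gap is the state-independence claim. It is \emph{not} true that splicing fixed connecting paths yields ``bounded comparisons between the return statistics at $a$ and those at $b$.'' First-return loop counts at different states are typically \emph{not} comparable up to bounded factors. Concretely, take $\phi\equiv 0$ on the renewal graph with vertex set $\N\cup\{0\}$, edges $0\to n$ for every $n\geq 0$ and $n\to n-1$ for $n\geq 1$. Then every first-return loop at $0$ of length $n\geq 2$ is the unique path $0\to(n-1)\to\cdots\to 1\to 0$, so $f_n(0)=1$, whereas first-return loops at $1$ are $1\to 0\to\cdots\to 0\to m\to\cdots\to 1$ with $k\geq 1$ zeros and $m\geq 1$, giving $f_n(1)=n-1$. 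The ratio $f_n(1)/f_n(0)=n-1$ is unbounded, so there is no bounded comparison of return statistics (even though, in this example, both states still have $P^*_G=0<\log 2=P_G$). The splicing heuristic also breaks at a structural level: prepending $a\to\cdots\to b$ and appending $b\to\cdots\to a$ to a first-return loop at $b$ only produces a first-return loop at $a$ when the original $b$-loop avoids $a$, which is not generic. The state-independence of ``$P^*_G(\phi,a)<P_G(\phi)$'' is a genuine theorem, not a soft consequence of mixing. The cleanest fix within your framework is to invoke the state-independence of $\Delta_a[\phi]>0$ (stated in \S\ref{s.SPR-def}, item (b) of the list citing \cite{Sarig-CMP-2001}); combined with the equivalence $\Delta_a[\phi]>0\iff P^*_G(\phi,a)<P_G(\phi)$ that you just established for each fixed $a$, this gives state-independence of the latter as well.
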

\begin{proof}
This is a consequence of the discriminant theorem of \cite{Sarig-CMP-2001}.
\end{proof}

\subsection*{Example} Consider the special case $\phi\equiv 0$. In this case $\ov{\phi+p}=\ov{p}=p\cdot \tau_a\circ \pi$. So for every $\ov{a}:=[a,\un{\xi},a]\in\ov{S}$,
$$
Z_n(\ov{\phi+p},\ov{a})=\sum_{[a,\un{\xi}_1,a],\ldots,[a,\un{\xi}_{n-1},a]\in \ov{S}}
\hspace{-0.5cm}e^{p(|a\un{\xi}|+|a\un{\xi}_1|+\cdots+|a\un{\xi}_{n-1}|)}=e^{p|a\un{\xi}|}\biggl(\sum_{[a\un{\eta}a]\in\ov{S}}
e^{p|a\un{\eta}|}\bigg)^{n-1},
$$
and $P_G(\ov{\phi+p})=\log \sum_{[a\un{\eta}a]\in \ov{S}}e^{p|a\un{\eta}|}$. The terms with $|a\un{\eta}|=k$ represent first return time loops with length $k$ at the vertex $a$. So
$P_G(\ov{\phi+p})=\log\sum_{k=1}^\infty f_k(a)e^{kp}$. It follows that
$\Delta_a[0]=\log \sum_{n=1}^\infty f_n(a) R^n_a$, where $R_a$ is the radius of convergence of $F_a(z)=\sum_{n\geq 1} f_n(a) z^n$. So $\phi\equiv 0$ is SPR if and only if $F_a(R_a)>1$.

\medskip
It is shown in \cite{Sarig-CMP-2001} that if $\Sigma^+$ is topologically mixing,  $\phi$ has summable variations, and $P_G(\phi)<\infty$, then: (a) SPR implies positive recurrence; (b) The SPR property is independent of the choice of $a$: If $\Delta_a[\phi]>0$ for some $a$, then $\Delta_a[\phi]>0$ for all $a$.
Additionally, in \cite{Cyr-Sarig} it is shown that: (c) If $\phi$ is weakly H\"older continuous, then the RPF measure of $\phi$ has exponential decay of correlations for H\"older continuous test functions.

\medskip
So far we have only discussed the topologically mixing case. If $\Sigma^+$ is topologically transitive with period $p>1$, and $\phi$ is a potential with summable variations and finite Gurevich pressure, then we say that $\phi$ is {\em strongly positively recurrent} if and only if $\phi_p|_{\Sigma_0^+}$ is strongly positively recurrent, where $\phi_p:=\sum_{i=0}^{p-1}\phi\circ\sigma^i$ and $\Sigma_0^+$ is some (any) component of the spectral decomposition.

\subsection{SPR and spectral gap}\label{s.SGP}
Let $\Sigma^+$ be a topologically mixing TMS.
A $\theta$-weakly H\"older continuous function $\phi$ is said to have the {\em spectral gap property} if  there exists a Banach space $(\mathcal L,\|\cdot\|_{\mathcal L})$ of functions $f:\Sigma^+\to\C$ with the following properties:
\begin{enumerate}[(a)]
\item $(L_\phi f)(\un{x})=\sum_{\sigma(\un{y})=\un{x}}e^{\phi(\un{y})}f(\un{y})$ converges absolutely whenever $f\in\mathcal L$, and $\mathcal L$ contains all the indicators of cylinder sets.
\item $f\in\mathcal L\Rightarrow |f|\in\mathcal L$ and $\bigl\||f|\bigr\|_{\mathcal L}\leq \|f\|_{\mathcal L}$.
\item $\mathcal L$-convergence implies uniform convergence on cylinders.
\item $L_\phi(\mathcal L)\subset\mathcal L$ and $L_\phi:\mathcal L\to \mathcal L$ is bounded.
\item $L_\phi=\lambda P+N$ where $\lambda=\exp P_G(\phi)$,  $P,N$ are bounded linear operators on $\mathcal L$ such that  $PN=NP=0$, $P^2=P$, $\dim \mathrm{Image}(P)=1$, and the spectral radius of $N$ is strictly less than $\lambda$.
\item For every bounded $\theta$-weakly H\"older continuous function $\psi:\Sigma^+\to\R$, $z\in\C$, $L_{\phi+z\psi}$ is bounded on $\mathcal L$, and $z\mapsto L_{\phi+z\psi}$ is analytic on some complex neighborhood of zero.
  (See \cite[Chapter VII \S 1]{Kato-Book} for the definition of analyticity for families of operators depending on a complex parameter.)
\end{enumerate}
Property (e) says that the spectrum of $L_\phi:\mathcal L\to \mathcal L$ consists of a simple eigenvalue $\lambda$ and a compact subset of $\{z:|z|<\lambda\}$. $P$ is the eigenprojection of $\lambda$. The ``spectral gap" is the difference between $|\lambda|$ and the spectral radius of $N$.

The  paper  \cite{Cyr-Sarig} proves that {\em a weakly H\"older continuous function $\phi$ with finite Gurevich pressure on a topologically  mixing TMS has the spectral gap property, if and only if $\phi$ is strongly positively recurrent.}

The space $\mathcal L$ constructed  there has the following additional property \cite[p. 650]{Cyr-Sarig}. Let $\mathcal H_\beta$ be the space of $\beta$-H\"older continuous functions,
\begin{equation}\label{eq:H-beta}
\mathcal H_\beta:=\left\{\psi:\Sigma^+\to\R:
\|\psi\|_\beta:=\|\psi\|_\infty+\sup_{\un{x}\neq \un{y}}\frac{|\psi(\un{x})-\psi(\un{y})|}{e^{-\beta t(\un x,\un y)}}<\infty
\right\}.
\end{equation}
\begin{enumerate}[(g)]
\item   If $\theta=e^{-\beta}$, then for all $f\in\mathcal H_\beta$ and $g\in\mathcal L$, $fg\in\mathcal L$ and $\|fg\|_{\mathcal L}\leq \|f\|_\beta\|g\|_{\mathcal L}$.
\end{enumerate}

We caution the reader that the SPR property does {\em not} imply the spectral gap property for topologically transitive TMS with period $p>1$. We may still find a Banach space with (a)--(d) on which $L_\phi$ acts quasi-compactly, but there will be $p$ points in the spectrum with modulus $\exp P_G(\phi)$, and not just one as in (e).

\section{The pressure function}
Throughout this section, we fix a topologically mixing one-sided countable Markov shift $\sigma:\Sigma^+\to\Sigma^+$ with finite positive Gurevich entropy $h$,
 and potentials $\phi,\psi$ with summable variations such that $\sup\phi<\infty$ and $\|\psi\|_\infty<\infty$. It follows that
 $$
 \mathfs M_{\phi+t\psi}(\Sigma^+)=\mathfs M(\Sigma^+)\text{ for all }t.
 $$
Recall the definition of the Gurevich pressure from \S\ref{s.pressure}.
\begin{definition}
The  {\em pressure function of $\phi$ in direction $\psi$} is the function $$\p_{\phi,\psi}:\R\to\R\ , \ \p_{\phi,\psi}(t):=P_G(\phi+t\psi).$$
\end{definition}
\noindent
By the variational principle \eqref{eq:measure_pressure},
\begin{align*}
\p_{\phi,\psi}(t)&
=\sup\{P_\mu(\phi)+t\int\psi d\mu: \mu\in\mathfs M(\Sigma^+)\},
\end{align*}
where  $
 P_\mu(\phi):=h_\mu(\sigma)+\int\phi d\mu.
 $

\begin{theorem}\label{t.pressure-convex-prop}
Let $\Sigma^+$ be a topologically mixing TMS with finite Gurevich entropy. If $\phi,\psi$ have summable variations, $\sup\phi<\infty$ and  $\|\psi\|_\infty<\infty$, then
\begin{enumerate}[(1)]
\item $\mathfrak p_{\phi,\psi}(t)$ is finite, convex, and continuous on $\R$.
\item $\mathfrak p_{\phi,\psi}(t)$ has well-defined finite one-sided derivatives
$$(D^\pm\mathfrak p_{\phi,\psi})(t):=\lim_{h\to 0^\pm }\frac{1}{h}[\mathfrak p_{\phi,\psi}(t+h)-\mathfrak p_{\phi,\psi}(t)].$$
\item
$
\displaystyle\lim_{t\to\infty} (D^{\pm}{\mathfrak p}_{\phi,\psi})(t)$ exist, are equal, and finite. We call their common value $\mathfrak p_{\phi,\psi}'(+\infty)$. Similarly,
$
\displaystyle\lim_{t\to-\infty} (D^{\pm}{\mathfrak p}_{\phi,\psi})(t)$ are equal and finite. We call their common value $\mathfrak p_{\phi,\psi}'(-\infty)$.
\item $\p_{\phi,\psi}'(+\infty)=\sup\{\int\psi d\mu:\mu\in\mathfs M(\Sigma^+)\}$, $\p_{\phi,\psi}'(-\infty)=\inf\{\int\psi d\mu:\mu\in\mathfs M(\Sigma^+)\}$.
\item
 If $\psi$ is not cohomologous to a constant by a continuous transfer function, then ${\mathfrak p}'_{\phi,\psi}(-\infty)<{\mathfrak p}'_{\phi,\psi}(+\infty)$.
\end{enumerate}
\end{theorem}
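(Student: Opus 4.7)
My plan is to derive (1)--(4) from the variational principle $\p_{\phi,\psi}(t)=\sup\{h_\mu(\sigma)+\int(\phi+t\psi)d\mu:\mu\in\mathfs M(\Sigma^+)\}$ and elementary convex analysis, and to handle (5) via a Livshic-type rigidity argument. For (1), the upper bound $\p_{\phi,\psi}(t)\leq h+\sup\phi+|t|\|\psi\|_\infty$ follows from $h_\mu\leq h$, $\int\phi d\mu\leq\sup\phi$, and $|\int\psi d\mu|\leq\|\psi\|_\infty$; a finite lower bound comes from testing the supremum against any single periodic-orbit measure, for which $h_\mu=0$ and both integrals are finite averages of finite values. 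Convexity is immediate since $\p_{\phi,\psi}$ is a supremum of affine functions of $t$, and continuity of a finite convex function on $\R$ is automatic. Part (2) is then standard convex analysis.

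For (3), monotonicity of $P_G$ in the potential and the identity $P_G(\phi+t\psi+c)=\p_{\phi,\psi}(t)+c$ for constants $c$ give $\p_{\phi,\psi}(t+h)-\p_{\phi,\psi}(t)\in[h\inf\psi,h\sup\psi]$ for $h>0$, hence $(D^\pm\p_{\phi,\psi})(t)\in[\inf\psi,\sup\psi]$. Monotonicity of $D^+\p_{\phi,\psi}$ yields existence of finite limits at $\pm\infty$, and the sandwich $D^-\p_{\phi,\psi}(t)\leq D^+\p_{\phi,\psi}(t)\leq D^-\p_{\phi,\psi}(t+1)$ forces the one-sided limits to coincide. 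For (4), let $c^+:=\sup\{\int\psi d\mu:\mu\in\mathfs M(\Sigma^+)\}$ and $c^+_{\mathrm{fin}}:=\sup\{\int\psi d\mu:P_\mu(\phi)>-\infty\}$. Since measures with $P_\mu(\phi)=-\infty$ contribute $-\infty$ to the variational principle, one obtains $\p_{\phi,\psi}(t)/t\leq(h+\sup\phi)/t+c^+_{\mathrm{fin}}$ for $t>0$; combined with $\p_{\phi,\psi}(t)/t\geq P_\mu(\phi)/t+\int\psi d\mu$ for any $\mu$ with $P_\mu(\phi)$ finite, and the standard convex-analysis identity $\p_{\phi,\psi}'(+\infty)=\lim_{t\to+\infty}\p_{\phi,\psi}(t)/t$, this yields $\p_{\phi,\psi}'(+\infty)=c^+_{\mathrm{fin}}$. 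The equality $c^+_{\mathrm{fin}}=c^+$ follows from a periodic approximation argument: by ergodic decomposition reduce to ergodic $\mu^*$; pick a cylinder $[a]$ with $\mu^*[a]>0$ and a $\mu^*$-typical $x\in[a]$ returning to $[a]$ at times $N_k\to\infty$; closing up the block $(x_0,\ldots,x_{N_k-1})$ produces a periodic-orbit measure $\mu_{y^{(k)}}$ with $P_{\mu_{y^{(k)}}}(\phi)$ finite and $\int\psi d\mu_{y^{(k)}}=(1/N_k)\psi_{N_k}(x)\to\int\psi d\mu^*$. The case $-\infty$ is symmetric.

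The hard part is (5). If $\p_{\phi,\psi}'(-\infty)=\p_{\phi,\psi}'(+\infty)=:c$, monotonicity of $D^+\p_{\phi,\psi}$ forces it to equal $c$ identically, so $\p_{\phi,\psi}(t)=\p_{\phi,\psi}(0)+tc$ is affine on $\R$. The variational inequality $P_\mu(\phi)+t\int\psi d\mu\leq\p_{\phi,\psi}(0)+tc$ valid for all $t\in\R$ then forces $\int\psi d\mu=c$ whenever $P_\mu(\phi)>-\infty$; this applies in particular to every periodic-orbit measure, so $\psi_n(x)=nc$ for every $\sigma^n$-periodic $x$. A Livshic-type theorem for TMS with summable-variation potentials (cf.\ \cite{Gurevich-Savchenko}) then produces a continuous $u:\Sigma^+\to\R$ with $\psi-c=u-u\circ\sigma$, contradicting the hypothesis. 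The main obstacle is to invoke a correct version of Livshic valid for countable alphabets and summable variations; this is a point where we would rely on existing literature rather than reprove the statement ab initio.
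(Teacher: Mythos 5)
Your proof is correct and takes essentially the paper's route: finiteness and convexity for (1), standard convex analysis for (2)--(4), and a reduction to Livshits for (5). Two points differ in detail. For convexity you observe that $\p_{\phi,\psi}$ is a supremum of the affine functions $t\mapsto P_\mu(\phi)+t\int\psi\,d\mu$ via the variational principle, whereas the paper proves log-convexity of $t\mapsto Z_n(\phi+t\psi,a)$ directly by H\"older's inequality and lets $n\to\infty$; both are standard, but the paper's version does not presuppose the variational principle. More substantively, in (4) you are more careful than the paper: your distinction between $c^+=\sup\{\int\psi\,d\mu\}$ and $c^+_{\mathrm{fin}}=\sup\{\int\psi\,d\mu:P_\mu(\phi)>-\infty\}$, closed by a periodic-orbit approximation, addresses measures with $\int\phi\,d\mu=-\infty$, which can exist since $\phi$ is only assumed bounded above. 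The paper's displayed lower-bound chain $p'(+\infty)\ge\frac{(P_\mu(\phi)+t\int\psi\,d\mu)-p(0)}{t}\to\int\psi\,d\mu$ is vacuous for such $\mu$, so as written it only gives $p'(+\infty)\ge c^+_{\mathrm{fin}}$; the equality $c^+_{\mathrm{fin}}=c^+$ is left implicit there, and your ergodic-decomposition-plus-Birkhoff-plus-summable-variations closing argument is exactly the right way to supply it (the same mechanism is invoked in the paper's proof of Lemma~\ref{l.h-is-concave}). For (5), your reliance on a Livshits theorem for summable-variation potentials on countable alphabets is sound: the paper remarks that Bowen's SFT proof \cite[Theorem~1.28]{Bowen-LNM} carries over verbatim.
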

For subshifts of finite type (or countable Markov shifts with the BIP property), these results are proved in \cite[\S3]{Babillot-Ledrappier-Lalley}, \cite{Morris-Zero-Temp}. In this case more is true:  $\p_{\phi,\psi}(t)$ is real-analytic on $\R$, and the one-sided derivatives can be replaced by ordinary derivatives.
For general countable Markov shifts we do not necessarily have differentiability on $\R$, but the theorem is saved by  standard convexity arguments:

\begin{proof}
Fix some $\nu\in\mathfs M(\Sigma^+)$ carried by a periodic orbit, then
$\p_{\phi,\psi}(t)\geq P_{\nu}(\phi+t\psi)>-\infty$, proving that $\p_{\phi,\psi}(t)\neq -\infty$ on $\R$.  Next,
$\p_{\phi,\psi}(t)<\infty$, because for every $\mu\in\mathfs M(\Sigma^+)$,
$
P_\mu(\phi+t\psi)=h_\mu(\sigma)+\int\phi d\mu+t\int\psi d\mu\leq h+\sup\phi+|t|\|\psi\|_\infty<\infty.$ Passing to the supremum over $\mu\in \mathfs M(\Sigma^+)$ gives
$\p_{\phi,\psi}(t)\leq h+\sup\phi+|t|\|\psi\|_\infty<\infty$. So $\p_{\phi,\psi}$ is finite on $\R$.

To see convexity, we use the identity $P_G(\phi+t\psi)=\lim\limits_{n\to\infty}\frac{1}{n}\log Z_n(\phi+t\psi,a)$.
By H\"older's inequality,  if $t=s t_1+(1-s)t_2$ with $s\in [0,1]$, then
$$
Z_n(\phi+t\psi,a)\leq Z_n(\phi+t_1\psi,a)^{s}Z_n(\phi+t_2\psi,a)^{1-s}.
$$
It follows that  $t\mapsto \frac{1}{n}\log Z_n(\phi+t\psi,a)$ is convex on $\R$. Pointwise limits of convex functions are convex, therefore $\p_{\phi,\psi}(t)$ is convex on $\R$. We proved (1), (2).

 To see (3) we use convexity to note that $t\mapsto (D^{\pm}\mathfrak p_{\phi,\psi})(t)$ is increasing (in the broad sense), and  for all $t_1<t_2$
$$
(D^-\mathfrak p_{\phi,\psi})(t_1)\leq
(D^+\mathfrak p_{\phi,\psi})(t_1)\leq
(D^-\mathfrak p_{\psi,\phi})(t_2)\leq
(D^+\mathfrak p_{\psi,\phi})(t_2).
$$
This implies the existence and equality of the limits which define $\p_{\phi,\psi}'(\pm\infty)$.
To see that these quantities are finite, we note that $P_G(\phi)-|t|\|\psi\|_\infty\leq \p_{\phi,\psi}(t)\leq P_G(\phi)+|t|\|\psi\|_\infty$,
so $\p_{\phi,\psi}(t)$ is a convex function with  asymptotes of finite slope.

We prove (4). Let $p(t):=\mathfrak p_{\phi,\psi}(t)$, $p'_{\pm}:=D^{\pm}p$, $p'(\pm\infty):=\mathfrak p'_{\phi,\psi}(\pm\infty)$. By convexity,  for every $\mu\in\mathfs M(\Sigma^+)$ and $t>0$,
$$
p'(\infty)\geq p'_+(t)\geq \frac{p(t)-p(0)}{t}\geq \frac{(P_\mu(\phi)+t\int\psi d\mu)-p(0)}{t}\xrightarrow[t\to\infty]{}\int\psi d\mu.
$$
Thus $p'(\infty)\geq \sup\{\int\psi d\mu:\mu\in\mathfs M(\Sigma^+)\}$.

Next, we fix $\eps>0$ arbitrarily small and  $t>0$ so large that
$
p'(+\infty)\leq p'_-(t)+\eps.
$
This is possible by (3).
For every $0<\delta<\eps$,  we choose a $\sigma$-invariant $m:=m_{t,\delta}$ such that $p(t+\delta)\leq P_m(\phi)+(t+\delta)\int\psi dm+\delta^2$. By the variational principle, $p(t)\geq P_m(\phi)+t\int \psi dm$, so
\begin{align*}
p'(\infty)&\leq p'_-(t)+\eps\leq \frac{p(t+\delta)-p(t)}{\delta}+\eps\\
&\leq\frac{[P_m(\phi)+(t+\delta)\int\psi dm+\delta^2]-[P_m(\phi)+t\int\psi dm]}{\delta}+\eps\\
&=\int\psi dm+\delta+\eps\leq\sup\{\int \psi d\mu\}+2\eps.
\end{align*}
The $\sup$ runs over all $\sigma$-invariant probabilities.
Passing to the limit $\eps\to 0$, we obtain $p'(+\infty)\leq \sup\{\int\psi d\mu:\mu\in\mathfs M(\Sigma^+)\}$.
This shows (4) for $p'(+\infty)$.
The claim for $p'(-\infty)$ follows immediately using the identity $\mathfrak p_{\phi,-\psi}(t)=\mathfrak p_{\phi,\psi}(-t)$.

To see (5), assume the contrary: $p'(-\infty)=p'(\infty)=c$.
Then (4) tells us that  $\int\psi d\mu=c$ for all $\mu\in\mathfs M(\Sigma^+)$.
Applying this to invariant measures carried by a periodic orbit we find that
$
\sigma^n(\un{x})=\un{x}\Rightarrow \sum_{k=0}^{n-1}\psi(\sigma^k \un{x})=cn.
$
By the Livshits theorem, this implies that $\psi$ is cohomologous to $c$ via a continuous transfer function.
(The proof of the Livshits Theorem for subshifts of finite type given in \cite[Theorem~1.28]{Bowen-LNM}  works verbatim in the countable alphabet case.)
\end{proof}
\noindent
{\em Remark.\/} The assumption that the Gurevich entropy $h$ is finite is used in two places: In (1) we use it to show that $P_G(\phi+t\psi)<\infty$ for all $t$, and in (4) we use it implicitly to make sure that $P_m(\phi)$ and  $P_\mu(\phi)$ are well-defined. If we relax the assumption that $h<\infty$ to the assumption that $P_G(\phi)<\infty$, then the theorem remains true, except that the suprema in (4) should be taken over $\mathfs M_\phi(\Sigma^+)$, instead of $\mathfs M(\Sigma^+)$.

\begin{theorem}\label{t.TDF-for-SPR}
Suppose $\Sigma^+$ is a topologically mixing countable Markov shift with finite Gurevich entropy. Let $\phi$ be an SPR $\theta$-weakly H\"older continuous potential such that $\sup\phi<\infty$.
Then  there exist $M:=M_\theta(\phi)>1$ and $\eps:=\eps_\theta(\phi)>0$ such that for every $\beta>|\log\theta|$ and $\psi$ such that  $\|\psi\|_\beta\leq 1$, the following holds:
\begin{enumerate}[(1)]
\item ${\mathfrak p}_{\phi,\psi}(t)$ is  real-analytic on $(-\eps,\eps)$.
\item If $|t|\leq \eps$ then there exists a unique equilibrium measure $m_t:=\mu_{\phi+t\psi}$ for $\phi+t\psi$, i.e.\
    $
    \p_{\phi,\psi}(t)=P_{m_t}(\phi+t\psi)
    $, and
    $
    \p_{\phi,\psi}(t)>P_{\mu}(\phi+t\psi)$  for $\mu\neq m_t$.
\item If $|t|<\eps$, then ${\mathfrak p}_{\phi,\psi}'(t)=\E_{m_t}(\psi):=\int\psi dm_t.
$
\item If $|t|<\eps$, then  $\displaystyle{\mathfrak p}_{\phi,\psi}''(t)=\sigma_{m_t}^2(\psi)$
where
$\sigma_{m_t}^2(\psi):=\lim\limits_{n\to\infty}\frac{1}{n}\E_{m_t}[(\psi_n
    -\E_{m_t}(\psi_n))^2]$ and $\psi_n:=\sum_{k=0}^{n-1}\psi\circ\sigma^k$.
\item If $|t|<\eps$, then $|\p_{\phi,\psi}'(t)|, |\p_{\phi,\psi}''(t)|, |\p_{\phi,\psi}'''(t)|\leq M$.

\item If $\vf\in\mathcal H_\beta$ such that $\|\vf\|_\beta\leq 1$, then $(t,s)\mapsto P_G(\phi+t\psi+s\vf)$ has  continuous partial derivatives of all orders on  $(-\eps,\eps)^2$.
\end{enumerate}
\end{theorem}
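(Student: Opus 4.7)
The plan is to reduce everything to standard analytic perturbation theory for the isolated leading eigenvalue of $L_{\phi+z\psi}$, applied on the Banach space $(\mathcal L,\|\cdot\|_{\mathcal L})$ of Cyr--Sarig described in \S\ref{s.SGP}. Since $\phi$ is SPR and $\theta$-weakly H\"older, that theorem furnishes a spectral gap: $L_\phi=\lambda_0 P_0+N_0$ with $\lambda_0=e^{P_G(\phi)}$ a simple isolated eigenvalue, $P_0$ a rank-one projection onto $\Span\{h_\phi\}$, and $\rho(N_0)<\lambda_0$. Write the perturbed operator via $L_{\phi+z\psi}f=L_\phi(e^{z\psi}f)$. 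For $\beta>|\log\theta|$, the function $e^{z\psi}-1$ lies in $\mathcal H_\beta$ with norm $O(|z|)$ whenever $\|\psi\|_\beta\le 1$, so by the multiplier bound (g) of \S\ref{s.SGP},
\[
\|L_{\phi+z\psi}-L_\phi\|_{\mathcal L}\le \|L_\phi\|_{\mathcal L}\cdot\|e^{z\psi}-1\|_\beta\le C(\phi)\,|z|,
\]
uniformly in $\psi$ with $\|\psi\|_\beta\le 1$. Combined with property (f), the map $z\mapsto L_{\phi+z\psi}$ is analytic as a family of bounded operators in a complex neighborhood of $0$ whose size depends only on $\phi$.

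Apply Kato's analytic perturbation theory (\cite[Chapter VII]{Kato-Book}) to this family: there is $\eps=\eps_\theta(\phi)>0$ such that for $|z|<\eps$ and every $\psi$ with $\|\psi\|_\beta\le 1$, $L_{\phi+z\psi}$ has a simple isolated eigenvalue $\lambda(z)$ close to $\lambda_0$, with rank-one eigenprojection $P(z)$ and eigenfunction $h(z)$, all analytic in $z$. For real $t\in(-\eps,\eps)$, the eigenfunction $h(t)$ is a positive continuous function (by continuity from $h_\phi>0$ and property (c)), hence $\phi+t\psi$ is positively recurrent and $\lambda(t)=\exp P_G(\phi+t\psi)$ by \eqref{def:GRPF}. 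This immediately gives (1), since $\mathfrak p_{\phi,\psi}(t)=\log\lambda(t)$ is real-analytic on $(-\eps,\eps)$. Strong positive recurrence of $\phi+t\psi$ (which persists under small perturbation, by the spectral gap characterization) together with Theorem~\ref{t.eq-measure} yields the existence and uniqueness of the equilibrium measure $m_t=\mu_{\phi+t\psi}=h(t)\nu(t)$ asserted in (2); here $\nu(t)$ is the conformal measure obtained as the analytic perturbation of $\nu_\phi$ via $L_{\phi+t\psi}^*$.

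For (3) and (4), differentiate the identity $L_{\phi+z\psi}h(z)=\lambda(z)h(z)$ and pair with $\nu(z)$. Using $\nu(z)(h(z))=1$, standard computation gives
\[
\lambda'(t)=\nu(t)\bigl(\psi\, L_{\phi+t\psi}h(t)\bigr)=\lambda(t)\int\psi\,dm_t,
\]
so $\mathfrak p'_{\phi,\psi}(t)=\E_{m_t}(\psi)$. Differentiating once more and using the spectral decomposition $L_{\phi+t\psi}^n=\lambda(t)^n P(t)+N(t)^n$ together with the resolvent identity yields
\[
\mathfrak p''_{\phi,\psi}(t)=\int\psi^2\,dm_t-\Bigl(\int\psi\,dm_t\Bigr)^2+2\sum_{n\ge 1}\Bigl(\int\psi\cdot(\psi\circ\sigma^n)\,dm_t-(\E_{m_t}\psi)^2\Bigr),
\]
which is precisely $\sigma_{m_t}^2(\psi)$: the series converges absolutely because $\rho(N(t))<\lambda(t)$ provides exponential decay of correlations for $\mathcal H_\beta$ test functions via property (g). Claim (5) now follows from Cauchy's estimates applied to the analytic function $\lambda(z)$ on a disk of radius $\eps/2$, using the uniform operator bound established in the first paragraph; this yields uniform bounds $M=M_\theta(\phi)$ on $\lambda'(t),\lambda''(t),\lambda'''(t)$, and hence on the derivatives of $\log\lambda(t)$. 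Finally, (6) is proved by the same argument applied to the two-parameter analytic family $(z_1,z_2)\mapsto L_{\phi+z_1\psi+z_2\vf}$: the bound $\|L_{\phi+z_1\psi+z_2\vf}-L_\phi\|_{\mathcal L}\le C(\phi)(|z_1|+|z_2|)$ places the family inside the region of validity of Kato's theory for $(z_1,z_2)\in(-\eps,\eps)^2$, so $(z_1,z_2)\mapsto \lambda(z_1,z_2)$ is jointly holomorphic, whence $(t,s)\mapsto P_G(\phi+t\psi+s\vf)$ is $C^\infty$ (in fact real-analytic) on $(-\eps,\eps)^2$.

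The main obstacle is verifying that $\eps$ and the derivative bound $M$ can be chosen independent of $\psi$ within the unit ball of $\mathcal H_\beta$: this requires the multiplier inequality in (g), which controls the perturbation uniformly, and the observation that the spectral gap size of $L_\phi$ on $\mathcal L$ depends only on $\phi$. The rest is routine analytic perturbation theory, together with the standard identification of the second derivative of pressure with asymptotic variance.
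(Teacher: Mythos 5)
Your proposal follows essentially the same route as the paper: the Cyr--Sarig Banach space with spectral gap, Kato's analytic perturbation theory for the family $L_{\phi+z\psi}$, the multiplier inequality (g) as the source of the crucial uniformity in $\psi$ within the $\mathcal H_\beta$-unit ball, Cauchy's estimates for part (5), and a two-parameter Hartogs-type argument for part (6). The one visible difference is in part (4): you expand $\mathfrak p''_{\phi,\psi}(t)$ as a Green--Kubo correlation series (summable thanks to the spectral gap), whereas the paper differentiates $L^n_t P_t=\lambda_t^n P_t$ twice, applies $P_t$, evaluates at $h_0/n$, and lets $n\to\infty$; both are standard and yield $\sigma_{m_t}^2(\psi)$. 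One small caveat: the parenthetical assertion that $h(t)>0$ follows ``by continuity from $h_\phi>0$ and property (c)'' is not rigorous as stated --- property (c) gives uniform convergence on each fixed cylinder, with no uniformity across the infinitely many partition sets --- but your concurrent appeal to persistence of SPR under perturbation via the spectral-gap characterization is the correct and sufficient argument, so this aside can simply be dropped. Likewise, identifying the Kato eigenvalue $\lambda(t)$ with $\exp P_G(\phi+t\psi)$ needs the small extra argument (as in the paper's Lemma~\ref{l.trivial} plus the pointwise convergence $\lambda_t^{-n}L_t^n 1_{[a]}\to P_t 1_{[a]}\neq 0$) rather than a bare citation of \eqref{def:GRPF}, but this is routine.
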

\noindent
The proof of the theorem is long, and is deferred to \S\ref{s.TDF-for-SPR}.

\medskip
\noindent
{\em Remark 1.\/}
Theorem \ref{t.TDF-for-SPR}  is false without the  SPR assumption, because of ``phase transitions", see \cite{Sarig-CMP-2001}, \cite{Sarig-CMP-2006}. Some version of the theorem is also true for TMS with infinite Gurevich entropy, but in this case the measures $m_t$ should be taken to be the RPF measures of $\phi+t\psi$ and not their  equilibrium measures.

\medskip
\noindent
{\em Remark 2.\/}
 A weaker version of the theorem with $\eps,M$ depending also on  $\psi$  is  known.  For the finite alphabet case, see  \cite[Chapter 4]{Parry-Pollicott-Asterisque}, \cite{Guivarch-Hardy}, \cite{Ruelle-TDF-book}. For infinite alphabets, see  \cite{Cyr-Sarig}.
 We need $\eps,M$ to be independent of $\psi$ to describe the regime when the ``sharp" EKP inequality \eqref{sharp-EKP} holds,  see the end of Section \ref{sec:large_pressure}.

\medskip
We make a few more comments on the quantity $\sigma^2_{m_t}(\psi)$ in part (4) of the theorem. In general,  the {\em variance} of $\psi\in L^2(m)$ is
$
\mathrm{Var}_m(\psi):=\int [(\psi-\smallint\psi dm)^2]dm.
$
If $m$ is $\sigma$-invariant, then $\psi\in L^2(m)\Rightarrow\psi_n:=\sum_{k=0}^{n-1}\psi\circ\sigma^k\in L^2(m)$ for all $n$, and  the {\em asymptotic variance} of $\psi$ is the following limit whenever it exists:
\begin{equation}\label{e.asymp-var}
\sigma^2_{m}(\psi):=\lim_{n\to\infty}\frac{1}{n}\mathrm{Var}_m(\psi_n).
\end{equation}
The asymptotic variance is important because it is the variance of the Gaussian  distributional limit  of  the $m$-distributions of $(\psi_n-n\int\psi dm)/\sqrt{n}$, whenever $m$ is the equilibrium measure of an SPR weakly H\"older continuous potential $\phi$, and $\psi\in\mathcal H_\beta$. See  \cite[Chapter 4]{Parry-Pollicott-Asterisque} and \cite{Cyr-Sarig}.

The following theorem gives some of the properties of the asymptotic variance.
\begin{theorem}\label{t.sigma}
Let $\Sigma^+$ be a topologically mixing countable Markov shift with finite Gurevich entropy. Let $\phi$ be a $\theta$-weakly H\"older continuous SPR potential such that $\sup\phi<\infty$. Let $m$ be the unique equilibrium measure of $\phi$.
\begin{enumerate}[(1)]
\item For every $\psi\in\mathcal H_\beta$, $\sigma_{m}(\psi)=0$ if and only if $\psi=r-r\circ\sigma+c$ where $c\in\R$, and  $r$ is continuous (but perhaps not bounded).
\item If $\psi,\vf\in\mathcal H_\beta$ and $\psi-\vf=r-r\circ\sigma+c$ with $r$ continuous and $c\in\R$, then $\sigma_{m}(\psi)=\sigma_{m}(\vf)$.
\item Let $M:=M_\theta(\phi)$ be as in Theorem~\ref{t.TDF-for-SPR}, then $\sigma_{m}(\psi)\leq M \|\psi\|_\beta$  whenever $e^{-\beta}\leq \theta$ and  $\psi\in\mathcal H_\beta$.
\end{enumerate}
\end{theorem}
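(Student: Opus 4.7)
All three parts will follow from the spectral-gap structure of the normalized transfer operator $\widehat{T}f:=\lambda_0^{-1}h_\phi^{-1}L_\phi(h_\phi f)$ from \S\ref{s.SGP}, combined with the thermodynamic identities in Theorem~\ref{t.TDF-for-SPR}. Because $\sigma_m^2(c\psi)=c^2\sigma_m^2(\psi)$, I may always rescale to $\|\psi\|_\beta\le 1$ before invoking Theorem~\ref{t.TDF-for-SPR}, so the constants $\eps$ and $M$ there remain independent of $\psi$. Throughout, set $\bar\psi:=\psi-\int\psi\,dm$.

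The converse of (1) is quick: if $\psi=r-r\circ\sigma+c$ with $r$ continuous, then on every periodic point $\sigma^n\un{x}=\un{x}$ the telescoping sum $\sum_{k=0}^{n-1}(\psi-c)(\sigma^k\un{x})=r(\un{x})-r(\sigma^n\un{x})$ vanishes, so $Z_n(\phi+t(\psi-c),a)=Z_n(\phi,a)$ for all $n,t$. Dividing by $n$, taking logarithms and passing to the Gurevich limit gives $\mathfrak p_{\phi,\psi}(t)=P_G(\phi)+tc$, which is affine in $t$. Differentiating and applying Theorem~\ref{t.TDF-for-SPR}(4) yields $\sigma_m^2(\psi)=\mathfrak p_{\phi,\psi}''(0)=0$.

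For the forward direction of (1), I plan a Gordin-style martingale--coboundary decomposition. Spectral gap of $\widehat{T}$ on $\mathcal{L}$ makes $\|\widehat{T}^n\bar\psi\|_{\mathcal{L}}$ decay exponentially (after projecting off the eigenspace of $1$, which is automatic since $\int\bar\psi\,dm=0$), so
\begin{equation*}
u:=-\sum_{n=0}^{\infty}\widehat{T}^{n+1}\bar\psi
\end{equation*}
converges in $\mathcal{L}$ and is continuous by property (c). A direct computation gives $u-\widehat{T}u=-\widehat{T}\bar\psi$, so $\bar u:=\bar\psi+u\circ\sigma-u$ satisfies $\widehat{T}\bar u=0$. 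The duality $\int f\cdot(g\circ\sigma^n)\,dm=\int(\widehat{T}^n f)\cdot g\,dm$ then makes $\{\bar u\circ\sigma^k\}_{k\ge 0}$ an orthogonal sequence in $L^2(m)$, and expanding $\bar\psi_n=\sum_{k=0}^{n-1}\bar u\circ\sigma^k+u-u\circ\sigma^n$ with one Cauchy--Schwarz bound on the cross term (using $u\in L^2(m)$) gives $\mathrm{Var}_m(\psi_n)=n\int\bar u^2\,dm+O(\sqrt{n})$. Dividing by $n$ yields $\sigma_m^2(\psi)=\int\bar u^2\,dm$. Hence $\sigma_m(\psi)=0$ forces $\bar u=0$ $m$-a.e.; continuity of $\bar u$ and full support of $m$ then give $\bar u\equiv 0$, so $\psi=c+u-u\circ\sigma$ with $r:=u$ continuous.

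Parts (2) and (3) are routine consequences. Minkowski's inequality applied to $\sqrt{\mathrm{Var}_m(\cdot)/n}$ yields the triangle inequality $\sigma_m(\psi_1+\psi_2)\le\sigma_m(\psi_1)+\sigma_m(\psi_2)$; applying it with the pairs $(\vf,\psi-\vf)$ and $(\psi,\vf-\psi)$ and using $\sigma_m(\psi-\vf)=0$ from (1) gives (2). For (3), combining Theorem~\ref{t.TDF-for-SPR}(5) with (4) yields $\sigma_m^2(\psi)=\mathfrak p''_{\phi,\psi}(0)\le M$ whenever $\|\psi\|_\beta\le 1$, so $\sigma_m(\psi)\le\sqrt{M}\le M$ (using $M>1$), and homogeneity extends the bound to arbitrary $\psi\in\mathcal{H}_\beta$. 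The principal technical obstacle is the forward direction of (1): one must organize the Gordin decomposition so that the series for $u$ converges in a space simultaneously delivering continuity of $u$ and enough integrability ($u\in L^2(m)$) that the boundary terms $u-u\circ\sigma^n$ are negligible on the $\sqrt{n}$-scale, which is precisely where the fine structure of the Cyr--Sarig Banach space $\mathcal{L}$ enters.
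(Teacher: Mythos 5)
Your approach is essentially the same as the paper's: parts (2) and (3) are routine consequences of Theorem~\ref{t.TDF-for-SPR} (your Minkowski argument for (2) is a minor variant of the paper's observation that $\mathfrak{p}_{\phi,\psi}(t)=\mathfrak{p}_{\phi,\vf}(t)+ct$ because Gurevich pressure only sees periodic orbits, which is slightly cleaner since it bypasses the triangle inequality for $\sigma_m$), and the ``only if'' direction of (1) is exactly the Gordin martingale--coboundary argument from Cyr--Sarig's Appendix~B, which the paper simply cites rather than reproducing. The one place your write-up papers over a genuine technicality is the claim that $\widehat{T}=\lambda_0^{-1}M_{h_\phi}^{-1}L_\phi M_{h_\phi}$ acts with spectral gap on $\mathcal{L}$ so that $\|\widehat{T}^n\bar\psi\|_{\mathcal L}$ decays exponentially: the paper's properties (a)--(g) give spectral gap for $L_\phi$ on $\mathcal{L}$, but $h_\phi$ is only bounded away from $0$ and $\infty$ on each partition set, not globally, so neither $h_\phi$ nor $1/h_\phi$ is in $\mathcal{H}_\beta$, and property (g) does not make $M_{h_\phi}$ an isomorphism of $\mathcal{L}$. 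Consequently $\widehat{T}^n\bar\psi\to 0$ in $\mathcal{L}$ is not automatic; what you actually get is $h_\phi\widehat{T}^n\bar\psi=\lambda_0^{-n}N^n(h_\phi\bar\psi)\to 0$ in $\mathcal{L}$, and deducing from this that $u:=-\sum\widehat{T}^{n+1}\bar\psi$ lies in $L^2(m)$ (so that the boundary term $u-u\circ\sigma^n$ is $O(1)$ in $L^2$) needs additional information about how $\nu_\phi$ weights the $\|\cdot\|_{\mathcal L}$-norm. You acknowledge this (``the fine structure of the Cyr--Sarig Banach space $\mathcal{L}$ enters'') but don't close it; you should either cite \cite[pp.~664--665]{Cyr-Sarig} as the paper does, or reorganize the Gordin decomposition to stay with $L_\phi$ acting on $\mathcal{L}$ and track $h_\phi$ and $\nu_\phi$ explicitly.
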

\noindent
The theorem is well-known for subshifts of finite type  \cite[Chapter 4]{Parry-Pollicott-Asterisque}, \cite{Guivarch-Hardy}, and follows from results in \cite{Cyr-Sarig} in the infinite alphabet case. See Section~\ref{s.TDF-for-SPR}.

\section{The  Proofs of Theorems~\ref{t.TDF-for-SPR} and~\ref{t.sigma}}\label{s.TDF-for-SPR}

The material in this section is not used in other parts of the paper and can be skipped at first reading.

As in \cite{Ruelle-TDF-book}, \cite{Parry-Pollicott-Asterisque}, \cite{Guivarch-Hardy}, the proof uses the {\em transfer operator method}: First we  represent  $\p_{\phi,\psi}(t)$ as the logarithm of the  leading eigenvalue $\lambda(t)$ of Ruelle's
 operator  $L_{\phi+t\psi}$ (see \eqref{eq:Ruelle-Operator}).
  Then we will analyze the dependence of $\lambda(t)$ on $t$, using perturbation theory. The perturbative analysis hinges on  the following fact from \S\ref{s.SGP}: If $\phi$ is SPR, then  $L_\phi$ acts with spectral gap on some ``nice" Banach space.

\subsection*{Standing assumptions for this section:}
Throughout this section, we suppose that $\Sigma^+$ is a topologically mixing TMS with positive finite Gurevich entropy, $\phi$ is an SPR $\theta$-weakly H\"older continuous potential such that $\sup\phi<\infty$, and $\psi\in\mathcal H_\beta$ where $e^{-\beta}\leq \theta$ and $\|\psi\|_\beta\leq 1$.
Since $\beta\mapsto\|\psi\|_\beta$ is monotonically increasing,  if  Theorem \ref{t.TDF-for-SPR} holds for the $\beta$ such that  $e^{-\beta}=\theta$, then it holds with the same $\eps,M$ for all $\beta$ such that $e^{-\beta}\leq \theta$. Henceforth we assume
$
e^{-\beta}=\theta.
$
 The next lemma does not require $\phi$ to be SPR.

\begin{lemma}\label{l.trivial}
For every state $a$, $\un{x}\in \Sigma^+$, $t\in\R$ and $n\geq 1$,  $(L_{\phi+t\psi}^n 1_{[a]})(\un{x})<\infty$ and
$
\p_{\phi,\psi}(t)=P_G(\phi+t\psi)=\lim\limits_{n\to\infty}\frac{1}{n}\log (L_{\phi+t\psi}^n 1_{[a]})(\un{x}).
$
\end{lemma}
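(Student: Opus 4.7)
The plan is to compare $L_{\phi+t\psi}^n 1_{[a]}(\un{x})$, which is a sum over admissible paths $a=y_0\to y_1\to\cdots\to y_{n-1}\to x_0$, with the partition function $Z_n(\phi+t\psi,a)$, which is a sum over admissible loops $a\to z_1\to\cdots\to z_{n-1}\to a$, converting between the two classes of words via topological mixing. Under the standing assumptions of the section, Theorem~\ref{t.pressure-convex-prop}(1) gives $P_G(\phi+t\psi)<\infty$, and it is standard (see \cite{Sarig-ETDS-99}) that then $Z_n(\phi+t\psi,a)<\infty$ for every $n$ and $\tfrac{1}{n}\log Z_n(\phi+t\psi,a)\to P_G(\phi+t\psi)$.

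For the upper bound together with finiteness, I would fix an admissible path $(x_0,u_1,\ldots,u_{N-1},a)$ of some length $N\geq 2$, which exists by topological mixing. Every preimage $\un{y}=(a,y_1,\ldots,y_{n-1},x_0,x_1,\ldots)$ of $\un{x}$ under $\sigma^n$ extends to a periodic point $\un{z}=(a,y_1,\ldots,y_{n-1},x_0,u_1,\ldots,u_{N-1},a,y_1,\ldots)$ of period $n+N$. For each $0\leq k\leq n-1$, $\sigma^k\un{y}$ and $\sigma^k\un{z}$ coincide in their first $n-k+1\geq 2$ coordinates, so weak Hölder continuity of $\phi$ (with $\var_m(\phi)\leq A\theta^m$ for $m\geq 2$) together with $\psi\in\mathcal H_\beta$ (using $e^{-\beta}=\theta$ and $\|\psi\|_\beta\leq 1$) yields
\[
\bigl|(\phi+t\psi)_n(\un{y})-(\phi+t\psi)_n(\un{z})\bigr|\leq (A+|t|)\sum_{k=0}^{n-1}\theta^{n-k+1}\leq \frac{(A+|t|)\theta^2}{1-\theta}=:K_1
\]
uniformly in $\un{y}$ and $n$. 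The remaining piece $(\phi+t\psi)_N(\sigma^n\un{z})$ is also uniformly bounded by some $K_2$, because each of its $N$ terms is evaluated on a sequence whose first two coordinates are determined by the chosen path, and $\phi,\psi$ have bounded oscillation on every $2$-cylinder. Summing over preimages gives $L_{\phi+t\psi}^n 1_{[a]}(\un{x})\leq e^{K_1+K_2}Z_{n+N}(\phi+t\psi,a)$, which proves finiteness and $\limsup_n \tfrac{1}{n}\log L_{\phi+t\psi}^n 1_{[a]}(\un{x})\leq P_G(\phi+t\psi)$.

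For the lower bound I would reverse the construction: fix $M\geq 2$ and an admissible path $(a,v_1,\ldots,v_{M-1},x_0)$, and to each loop $a\to z_1\to\cdots\to z_{n-1}\to a$ associate the preimage $\un{y}=(a,z_1,\ldots,z_{n-1},a,v_1,\ldots,v_{M-1},x_0,x_1,\ldots)$ of $\un{x}$ under $\sigma^{n+M}$. An entirely analogous Hölder estimate gives $L_{\phi+t\psi}^{n+M}1_{[a]}(\un{x})\geq e^{-K'}Z_n(\phi+t\psi,a)$, so $\liminf_n \tfrac{1}{n}\log L_{\phi+t\psi}^n 1_{[a]}(\un{x})\geq P_G(\phi+t\psi)$ (the shifted subsequence $n+M$ is cofinite in $\N$). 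Combined with the previous paragraph this gives the asserted limit.

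The main obstacle is making the Hölder comparison uniform in $n$. The decisive point is that after shifting by $k$ the extended preimage and its periodic counterpart agree in $n-k+1\geq 2$ coordinates rather than just one, so the error series $\sum_{k=0}^{n-1}\var_{n-k+1}(\phi)$ is geometric with ratio $\theta$ and bounded independently of $n$. Without this gain of one extra agreed coordinate one would need to control $\var_1(\phi)$, which is not assumed finite for a weakly Hölder potential; the whole argument depends on this one coordinate of slack.
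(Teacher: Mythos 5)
Your proof is correct and takes essentially the same approach as the paper: both use topological mixing to splice a fixed connecting path between $x_0$ and $a$, converting between $\sigma^n$-preimages of $\un{x}$ in $[a]$ and periodic orbits at $a$, and both control the Birkhoff sums via the summable-variations oscillation bound (including the explicit use of the one extra agreed coordinate so that only $\sum_{m\geq 2}\var_m$ is needed). The paper organizes this slightly differently — it first proves $e^{-B}Z_n(\vf,a)\leq L_\vf^n 1_{[a]}\leq e^{B}Z_n(\vf,a)$ on $[a]$ and then transfers the estimate to points of $[b]$ via intermediate comparisons with $L_\vf^{n+p}1_{[b]}$ and $Z_{n+p}(\vf,b)$, and it spells out the finiteness of $Z_n(\vf,a)$ via a quasi-super-multiplicativity estimate which you instead cite — but these are bookkeeping differences, not a different route.
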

\begin{proof}
Fix $t$, and let $\vf:=\phi+t\psi$.
Set $B:=\sum_{n\geq 2}\var_n(\vf)$.  Then  for every cylinder $[a,\xi_1,\ldots,\xi_{n-1},b]$ of length $n+1$ with $n\geq 1$, and for every $\un{x},\un{y}\in [a,\xi_1,\ldots,\xi_{n-1},b]$,
$$
|\vf_n(\un{x})-\vf_n(\un{y})|\leq B.
$$

Let $\vf_n^{\pm}[a,\xi_1,\ldots,\xi_{n-1},a]$ denote the supremum ($+$) or infimum ($-$) of $\vf_n$ on $[a,\xi_1,\ldots,\xi_{n-1},a]$. These differ from each other by at most $B$. It follows that
$$
e^{-B} \sum_{[a,\xi_1,\ldots,\xi_{n-1},a]\neq \emptyset}\!\!\!\!e^{\vf_n^+[a,\xi_1,\ldots,\xi_{n-1},a]}
\leq Z_n(\vf,a)\leq  e^B \sum_{[a,\xi_1,\ldots,\xi_{n-1},a]\neq \emptyset}\!\!\!\! e^{\vf_n^-[a,\xi_1,\ldots,\xi_{n-1},a]}.
$$
Similarly, one shows that if $[a,\un{\xi},a]$ and $[a,\un{\eta},a]$ are non-empty cylinders of lengths $m+1$ and $n+1$, then
$
|\vf_{m+n}^+[a,\un{\xi},a,\un{\eta},a]-\vf_{m}^-[a,\un{\xi},a]-\vf_{n}^-[a,\un{\eta},a]|\leq 2B.
$

These estimates can be used to show that
$
Z_n(\vf,a)^k\leq \exp(2kB) Z_{kn}(\vf,a).
$
By the standing assumptions,
$
\lim \frac{1}{\ell}\log Z_{\ell}(\vf,a)= P_G(\vf)\leq P_G(0)+\sup\vf=h+\sup\vf<\infty$, and therefore $Z_{kn}(\vf,a)<\infty$ for every $n \geq 1$, for all $k$ large enough. So
\begin{equation}\label{Z-n-finite}
Z_n(\vf,a)<\infty\text{ for all $n$.}
\end{equation}
If $\un{x}\in [a]$,
\[
\displaystyle(L_\vf^n 1_{[a]})(\un{x})=\sum_{\sigma^n(\un{y})=\un{x}}e^{\vf_n(\un{y})}1_{[a]}(\un{y})=
\sum_{[a,\xi_1,\ldots,\xi_{n-1},a]\neq \emptyset}
e^{\vf_n(a,\un{\xi},a,x_1,x_2,\ldots)}.
\]
The exponent is sandwiched between $\vf_n^\pm[a,\un{\xi},a]$, so by the previous paragraph,
\begin{equation}\label{e.L-Z}
e^{-B}Z_n(\vf,a)\leq (L_\vf^n 1_{[a]})(\un{x})\leq e^B Z_n(\vf,a)
\end{equation} for all $n$. It follows that  $L_\vf^n 1_{[a]}$ is finite on $[a]$ for all $n$, and $\frac{1}{n}\log (L_\vf^n 1_{[a]})(\un{x})\xrightarrow[n\to\infty]{}P_G(\vf)$ for all $\un{x}\in [a]$. This proves the lemma in the special case when $\un{x}\in [a]$.

Suppose $\un{x}\in [b]$ where $b\neq a$.
By topological mixing, there is a finite admissible path $b\un{\xi}a =(b,\xi_1,\ldots,\xi_{p-1},a)$.  If $\un{y}$ is a $\sigma^n$-pre-image of $\un{x}$ in $[a]$, then  $(b,\un{\xi},\un{y})$ is a $\sigma^{n+p}$-preimage  of $\un{x}$ in $[b]$.
Therefore
$$
e^{\vf_p^-[b\un{\xi} a]}(L_\vf^n 1_{[a]})(\un{x})=
e^{\vf_p^-[b\un{\xi} a]}\sum_{\sigma^n(\un{y})=\un{x}}e^{\vf_n(\un{y})}1_{[a]}(\un{y})\leq e^B (L_\vf^{n+p} 1_{[b]})(\un{x}).
$$
Since $\un{x}\in [b]$,
$
(L_\vf^{n+p} 1_{[b]})(\un{x})\leq \const Z_{n+p}(\vf,b)\text{ on }[b],$
by \eqref{e.L-Z}. It follows that
$$
L_\vf^n 1_{[a]}\leq \const Z_{n+p}(\vf,b)<\infty\text{ on }[b],
$$ and   $\limsup \frac{1}{n}\log L_\phi^n 1_{[a]}\leq P_G(\vf)$ on $[b]$.

Topological mixing also gives us a finite admissible path $(a,\eta_1,\ldots,\eta_{q-1},b)$. Let $\un{z}:=(a,\un{\eta},\un{x})$. If $\sigma^n(\un{y})=\un{z}$, then $\sigma^{n+q}(\un{y})=\un{x}$. So
\begin{align*}
&e^{-\vf_q(\un{z})}
(L_{\vf}^{n+q}1_{[a]})(\un{x})\geq (L_\vf^n 1_{[a]})(\un{z})\geq e^{-B}Z_n(\vf,a),
\end{align*}
and $\liminf \frac{1}{n}\log L_\phi^n 1_{[a]}\geq P_G(\vf)$ on $[b]$.
\end{proof}

The following lemma generalizes \cite[Theorem 3]{Sarig-CMP-2001} by making $\eps$ independent of $\psi$ (the importance of this is explained at the end of \S\ref{sec:large_pressure}). The weak H\"older continuity and the SPR property of $\phi$ are essential.

\begin{lemma}\label{l.perturb-thry}
There exists $\eps:=\eps_\theta(\phi)>0$ such that for every $\psi\in\mathcal H_\beta$ such that $\|\psi\|_\beta\leq 1$,
 $\p_{\phi,\psi}(t)$ is real-analytic on $(-\eps,\eps)$.
\end{lemma}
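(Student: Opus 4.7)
The strategy is to realize $\mathfrak{p}_{\phi,\psi}(t) = \log\lambda(t)$, where $\lambda(t)$ is the leading eigenvalue of Ruelle's operator $L_{\phi + t\psi}$ acting on the Banach space $\mathcal{L}$ from \S\ref{s.SGP}, and then to extract real-analyticity from Kato's analytic perturbation theory. Since $\phi$ is SPR, the Cyr--Sarig theorem guarantees that $L_\phi$ has spectral gap on $\mathcal{L}$, so $\lambda_0 := e^{P_G(\phi)}$ is a simple isolated eigenvalue separated from the rest of $\sigma(L_\phi)$ by some fixed gap $\delta_0 := \delta_0(\phi) > 0$. Kato's theory (\cite{Kato-Book}, Ch.~VII) then says that any bounded operator $L$ with $\|L - L_\phi\|_\mathcal{L} < \delta$ (for some $\delta := \delta(\phi,\delta_0) > 0$) has a unique simple eigenvalue near $\lambda_0$, depending analytically on $L$. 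The content of the lemma is therefore that one may choose $\eps$ so that $L_{\phi + t\psi}$ stays within the Kato neighborhood for all $|t| < \eps$ \emph{uniformly} in $\psi$ with $\|\psi\|_\beta \le 1$.

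The key observation that produces this uniformity is algebraic. Writing $L_{\phi + t\psi} f = L_\phi(e^{t\psi} f) = L_\phi \circ M_{e^{t\psi}} f$ and expanding the exponential gives the operator-valued series
\[
L_{\phi + z\psi} \;=\; \sum_{k=0}^{\infty} \frac{z^k}{k!}\, L_\phi \circ M_{\psi^k} \qquad (z \in \C).
\]
Now the Hölder norm is submultiplicative, $\|fg\|_\beta \leq \|f\|_\beta \|g\|_\beta$, so $\|\psi^k\|_\beta \leq \|\psi\|_\beta^k \leq 1$. Property (g) of \S\ref{s.SGP} then yields $\|M_{\psi^k}\|_\mathcal{L} \leq \|\psi^k\|_\beta \leq 1$, and hence $\|L_\phi \circ M_{\psi^k}\|_\mathcal{L} \leq \|L_\phi\|_\mathcal{L}$. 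The series therefore converges absolutely in $B(\mathcal{L})$ for every $z \in \C$, defining an entire $B(\mathcal{L})$-valued function, and
\[
\|L_{\phi + t\psi} - L_\phi\|_\mathcal{L} \;\leq\; \|L_\phi\|_\mathcal{L}\,(e^{|t|} - 1)
\]
is a bound depending only on $\phi$ (via $\|L_\phi\|_\mathcal{L}$), not on $\psi$. Choosing $\eps := \eps_\theta(\phi) > 0$ so that $\|L_\phi\|_\mathcal{L}(e^\eps - 1) < \delta$, Kato's theorem supplies, for every admissible $\psi$, an analytic map $t \mapsto \lambda(t)$ on $(-\eps, \eps)$ with $\lambda(0) = \lambda_0$ and $\lambda(t)$ a simple isolated eigenvalue of $L_{\phi+t\psi}$ with spectral gap.

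It remains to identify $\lambda(t)$ with $e^{\mathfrak{p}_{\phi,\psi}(t)}$; this is the step I expect to carry the subtlety, because a priori $\lambda(t)$ is only the eigenvalue continuing from $\lambda_0$, and one must rule out the possibility that some other spectral value overtakes it. I would proceed as follows. Shrinking $\eps$ if necessary, the perturbed spectral decomposition $L_{\phi+t\psi} = \lambda(t) P(t) + N(t)$ with $\mathrm{spr}(N(t)) < |\lambda(t)|$ persists for $|t|<\eps$, and $P(t) \to P(0)$ in operator norm. Since $P(0)1_{[a]}$ is the strictly positive continuous eigenfunction $h_\phi$ (up to a multiplicative constant given by $\nu_\phi[a]$) and convergence in $\mathcal{L}$ dominates uniform convergence on cylinders (property (c)), $P(t)1_{[a]}$ remains positive on $[a]$ for $|t|$ small. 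Evaluating
\[
L_{\phi+t\psi}^n 1_{[a]}(\un x) \;=\; \lambda(t)^n (P(t)1_{[a]})(\un x) + (N(t)^n 1_{[a]})(\un x)
\]
at a point $\un x \in [a]$ and extracting the exponential growth rate gives $\tfrac{1}{n}\log L_{\phi+t\psi}^n 1_{[a]}(\un x) \to \log\lambda(t)$. By Lemma~\ref{l.trivial}, this limit equals $\mathfrak{p}_{\phi,\psi}(t)$, so $\mathfrak{p}_{\phi,\psi}(t) = \log\lambda(t)$ is real-analytic on $(-\eps, \eps)$. The main delicate point, then, is the positivity of $P(t)1_{[a]}$, which is what pins down that the analytically continuing eigenvalue is indeed the leading one (not merely one of potentially several with the same modulus).
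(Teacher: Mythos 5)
Your proposal is correct and follows essentially the same route as the paper: represent $\p_{\phi,\psi}(t)$ as $\log$ of the leading eigenvalue of $L_{\phi+t\psi}$ on the Cyr--Sarig space $\mathcal L$, obtain a $\psi$-uniform operator bound $\|L_{\phi+t\psi}-L_\phi\|_{\mathcal L}\leq\|L_\phi\|(e^{|t|}-1)$ from property (g) so that the Kato perturbation neighborhood can be entered uniformly in $\psi$ with $\|\psi\|_\beta\leq 1$, and then identify the perturbed eigenvalue with $e^{\p_{\phi,\psi}(t)}$ by comparing the growth of $L_{\phi+t\psi}^n 1_{[a]}(\un x)$ against Lemma~\ref{l.trivial}. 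The only glossed-over points are the reality of $\lambda(t)$ for real $t$ (needed so that the growth rate equals $\log\lambda(t)$ and not merely $\log|\lambda(t)|$; the paper handles this with a short reality argument) and that the non-vanishing of $P(t)1_{[a]}$ can be obtained from the uniform Lipschitz continuity of $z\mapsto P_z$, both of which are minor and entirely in the spirit of the paper's proof.
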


\begin{proof}
By our assumptions $\phi$ is $\theta$-weakly H\"older continuous, and strongly positively recurrent. As explained  in \S\ref{s.SGP}, this implies that $L_\phi$ acts with spectral gap on  a Banach space $(\mathcal L,\|\cdot\|_{\mathcal L})$ over $\C$, with properties (a)--(g) as listed there.
The space $\mathcal L$ depends on $\phi$ and on  $\theta$.\footnote{The dependence on $\theta$ is genuine: For some $\phi$, e.g. $\phi\equiv 0$, there is no ``canonical" $\theta$.}

Property (e) says that  the spectrum of  $L_\phi:\mathcal L\to \mathcal L$ consists of a simple eigenvalue $\lambda:=\exp P_G(\phi)$, and a compact set  $K_0\subset \{z\in\C:|z|<\lambda\}$. It is well-known that
this spectral picture persists for small perturbations of $L_\phi$.

Specifically,
let $\gamma'$ be a smooth parametrization of a circle in $\C$, with center zero, radius $R'<\lambda$, and such that $\gamma'$ contains $K_0$ in its interior.
Let $\gamma$ be a smoothly parameterized closed circle with center $\lambda$, and radius $R$ so small that $\gamma$ is completely outside $\gamma'$. By the theory of analytic perturbations of linear operators on Banach spaces, there exists $\delta_0:=\delta_0(\phi,\theta)>0$ as follows. If $L$ is a bounded linear operator such that $\|L-L_\phi\|<\delta_0$, then
\begin{enumerate}[(A)]
\item   the spectrum of $L$ consists of a simple eigenvalue $\lambda(L)$ inside $\gamma$,  and a compact subset $K_L$ inside $\gamma'$.
\item
    $P=P(L):=\frac{1}{2\pi i}\oint_\gamma (\xi I-L)^{-1}d\xi$
    is a well-defined operator such that $P^2=P$, $PL=LP=\lambda(L)P$, $\dim\mathrm{ Image}(P)=1$, and
    $$
    \mathrm{spectrum}(LP)=\{\lambda(L)\}\ , \ \mathrm{spectrum}(L(I-P))=K_L.
    $$
\end{enumerate}
See \cite[Theorem~IV-3.16]{Kato-Book}.

Define the operators
$\displaystyle (L_{z}f)(\un{x}):=\sum_{\sigma(\un{y})=\un{x}}e^{\phi(\un{y})+z\psi(\un{y})}f(\un{y})$ for $z\in\C$ and
$
M_\psi f= \psi f
$. By property (g), $M_\psi$ is a bounded linear operator on $\mathcal L$, and  $\|M_\psi\|\leq \|\psi\|_\beta\leq 1$. Thus
\begin{align}\label{Taylor-L}
&L_z=L_\phi M_{\exp z\psi}=\sum_{n=0}^\infty \frac{z^n}{n!}L_\phi M_{\psi}^n,
\end{align}
and this series converges absolutely  on $\mathcal L$ in the operator norm, because $\|L_\phi M_\psi^n\|\leq \|L_\phi\|\|\psi\|_\beta^n\leq \|L_\phi\|$. It follows that $L_z$ is a bounded linear operator on $\mathcal L$, and
$$
\|L_z-L_\phi\|\leq |z|\sum_{n=1}^\infty \frac{|z|^{n-1}}{n!}\|L_\phi\|\leq 2\|L_\phi\|\cdot |z|, \text{ for all }|z|\leq 1.
$$
Let $\eps^{(1)}:=\eps^{(1)}(\theta,\phi):=\min\{1,\frac{\delta_0}{2\|L_\phi\|}\}$. If $|z|<\eps^{(1)}$, then $\|L_z-L_\phi\|<\delta_0$, whence
$$
P_z:=P(L_z)
$$
is well-defined. Notice that $\eps^{(1)}$ is independent of $\psi$.

Next we claim that  there exist $\eps^{(2)}:=\eps^{(2)}(\theta,\phi)$ and $K:=K(\theta,\phi)$ such that for all $|z|<\eps^{(2)}$ and $\xi\in\gamma$, $\xi I-L_z$ is invertible, and
$
\|(\xi I-L_z)^{-1}\|\leq K.
$

To see this, recall that by choice of $\gamma$, the spectrum of $L_\phi$ does not intersect $\gamma$, and therefore $\xi I-L_\phi$ has a bounded inverse for all $\xi\in\gamma$. The resolvent set of $L_\phi$ is open,  and $\xi\mapsto (\xi I-L_\phi)^{-1}$ is analytic on it, therefore the norm of this inverse is locally bounded on $\gamma$, whence  by compactness, less than some global constant $K'=K'(\theta,\phi)$ everywhere on $\gamma$. The formal calculation
$$
(\xi I-L_z)^{-1}=(\xi I-L_\phi-(L_z-L_\phi))^{-1}=(\xi I-L_\phi)^{-1}\sum_{n=0}^\infty [(L_z-L_\phi)(\xi I-L_\phi)^{-1}]^n
$$
 shows that if $K'\|L_z-L_\phi\|\leq \frac{1}{2}$, then $\xi I-L_z$ is invertible, and the norm of the inverse is bounded by $2K'$.

 Let
$
\eps^{(2)}:=\eps^{(2)}(\theta,\phi):= \min\{\eps^{(1)},\frac{1}{4\|L_\phi\|K'}\}
$, $K:=K(\theta,\phi):=2 K'$. These constants are independent of $\psi$, and if $|z|<\eps^{(2)}$, then
$
\|(\xi I-L_z)^{-1}\|\leq K
$.

\medskip
We now investigate the properties of
$
P_z:=P(L_z)
$.

\medskip
$P_0$ is the eigenprojection of $\lambda_0:=\lambda$. By the spectral gap property for $\phi$ and  \cite[Lemma 8.1]{Cyr-Sarig},
\begin{equation}
	\label{eq:GRPF}
	P_0 f=h_0\int f d\nu_0
\end{equation}
where $h_0$ is a positive function, uniformly bounded away from zero and infinity on partition sets, and $\nu_0$ is a positive measure which is finite and positive on cylinders. In fact, by \eqref{eq:GRPF} $h_0 d\nu_0$ is the RPF measure, whence the equilibrium measure, of $\phi$.

\medskip
\noindent
{\sc Claim:} {\em $z\mapsto P_z$ is analytic on $\{z\in\C:|z|<\eps^{(2)}\}$.}

\medskip
\noindent
{\em Proof of the claim:\/} If $|z|,|w|<\eps^{(2)}$, then $P_z,P_w$ are well-defined, and by (B), $
\frac{P_z-P_w}{z-w}=\frac{1}{2\pi i}\oint_\gamma \frac{(\xi I-L_z)^{-1}-(\xi I-L_w)^{-1}}{z-w} d\xi
$.
The integrand satisfies the identity
$$
\frac{(\xi I-L_z)^{-1}-(\xi I-L_w)^{-1}}{z-w}=(\xi I-L_z)^{-1}\left(\frac{L_z-L_w}{z-w}\right)(\xi I-L_w)^{-1}.
$$
To see this, start with the left-hand side, pull $(\xi I-L_z)^{-1}$ to the left, and pull $(\xi I-L_w)^{-1}$ to the right.

It is not difficult to verify, using \eqref{Taylor-L}, that for every $|z|<\eps^{(2)}$,
$$
(\xi I-L_z)^{-1}\left(\frac{L_z-L_w}{z-w}\right)(\xi I-L_w)^{-1}\xrightarrow[w\to z]{}(\xi I-L_z)^{-1} L_z' (\xi I-L_z)^{-1},
$$
where
$L_z':=\sum_{n=1}^\infty \frac{z^{n-1}}{(n-1)!}L_\phi M_\psi^n$.

If  $|z|,|w|<\eps^{(2)}$ and $\xi\in\gamma$, then $\|(\xi I-L_z)^{-1}\|, \|(\xi I-L_w)^{-1}\|<K$, and
\begin{align*}
\left\|\frac{L_z-L_w}{z-w}\right\|&\leq \sum_{n=1}^\infty\frac{|z^n-w^n|}{n!|z-w|}\|L_\phi\|\|\psi\|_\beta^n\ \ \text{, by \eqref{Taylor-L},}\\
&\leq
\|L_\phi\|\sum_{n=1}^\infty \frac{|z|^{n-1}+|z|^{n-2}|w|+\cdots +|w|^{n-1}}{n!}< 3\|L_\phi\|.
\end{align*}
Therefore $\|(\xi I-L_z)^{-1}\left(\frac{L_z-L_w}{z-w}\right)(\xi I-L_w)^{-1}\|\leq 3K^2\|L_\phi\|$.

Now fix some arbitrary bounded linear functional $\vf$ on the Banach space of bounded linear operators on $\mathcal L$. By the previous discussion and the bounded convergence theorem,
$$
\lim_{w\to z}\vf\left(\frac{P_z-P_w}{z-w}\right)=\frac{1}{2\pi i} \oint_\gamma \vf\left[(\xi I-L_z)^{-1} L_z' (\xi I-L_z)^{-1} \right] d\xi.
$$
Thus $P_z$ is differentiable in the weak sense at  $z$.
By a well-known consequence of the Banach-Steinhaus theorem, this implies differentiability in the strong sense, and indeed holomorphy on $\{z\in\C:|z|<\eps^{(2)}\}$. The claim is proved.

\medskip
We note for future reference a by-product of the previous proof: If $|z|,|w|<\eps^{(2)}$, then
$$
\left\|\frac{P_z-P_w}{z-w}\right\|\leq \left\|\oint_\gamma
\frac{(\xi I-L_z)^{-1}-(\xi I-L_w)^{-1}}{z-w}d\xi
\right\|\leq \frac{3K^2\|L_\phi\|}{2\pi}\cdot\text{length}(\gamma).
$$
So $\|P_z-P_w\|\leq C|z-w|$ with $C:=C(\theta,\phi)$ independent of $\psi$.

\medskip
Our next task is to prove the analyticity of $\lambda_z:=\lambda(L_z)$ on some neighborhood of zero.
Pick a bounded linear functional $\vf$ on $\mathcal L$ such that $\vf(h_0)\neq 0$. Since
$
\|P_z-P_w\|\leq C|z-w|
$,
there exists  $0<\eps^{(3)}(\theta,\phi)<\eps^{(2)}$ independent of $\psi$ such that
$
\vf(P_z h_0)\neq 0 \text{ for all }|z|<\eps^{(3)}.
$
From (B) and linearity of $\vf$, we get that
$$
\displaystyle\lambda(L_z)=\frac{\vf(L_z P_z h_0)}{\vf(P_z h_0)},
$$
a ratio of two holomorphic non-vanishing functions on  $\{z:|z|<\eps^{(3)}\}$. So $\lambda_z$ is holomorphic there on $\{z:|z|<\eps^{(3)}\}$.

\medskip
We are now ready to prove the  lemma.
By \eqref{eq:GRPF}, $P_0 1_{[a]}=\nu_0[a] h_0\neq 0$, whence $\|P_0 1_{[a]}\|\neq 0$. Using $\|P_z-P_w\|\leq C|z-w|$, we find that for some $0<\eps^{(4)}(\theta,\phi)<\eps_\beta^{(3)}$ independent of $\psi$, $\|P_z 1_{[a]}\|\neq 0$ for all $|z|<\eps^{(4)}$.

Fix $|z|<\eps^{(4)}$ and choose some $\un{x}\in\Sigma^+$ such that $(P_z 1_{[a]})(\un{x})\neq 0$. Let $N_z:=L_z(I-P_z)$. Since $P_z^2=P_z$ and $P_z L_z=\lambda(L_z)P_z$,  we have  $P_z N_z=N_z P_z=0$. By (A) and (B),  the spectrum of $N_z$ is inside $\gamma'$ and $\lambda_z$ is outside $\gamma'$, therefore $\lambda_z^{-n}N_z^n\to 0$ in norm. So
$$
\lambda_z^{-n}L_z^n=\lambda_z^{-n}(\lambda_z P_z+N_z)^n=\lambda_z^{-n}(\lambda_z^n P_z+N_z^n)\to P_z\text{ in $\mathcal L$}.
$$
By property (c) of  $\mathcal L$,  $\lambda_z^{-n}(L_z^n 1_{[a]})(\un{x})\to (P_z 1_{[a]})(\un{x})\neq 0$ for all $\un{x}$.

We now specialize to the case of {\em real} $t$, $|t|<\eps^{(4)}$. For such $t$, $(L_t^n 1_{[a]})(\un{x})=(L_{\phi+t\psi}^n 1_{[a]})(\un{x})$ is real-valued. Necessarily, $\lambda_t$ and $(P_t 1_{[a]})(\un{x})$ are also real-valued.\footnote{If $z^n r_n\to w\neq 0$ and $r_n$ are real, then $z,w$ must also be real, otherwise $\frac{z^n r_n}{|z^n r_n|}\not\to \frac{w}{|w|}$. } Passing to natural logarithms, we find using Lemma~\ref{l.trivial} that
$$
\log\lambda_t=\lim_{n\to\infty}\frac{1}{n}\log (L_{\phi+t\psi}^n 1_{[a]})(\un{x})=\p_{\phi,\psi}(t)\text{ whenever }|t|<\eps^{(4)}.
$$
By (A), (B) and the choice of  $\eps^{(4)}$,  $z\mapsto \lambda_z$ maps $\{z:|z|<\eps^{(4)}\}$ holomorphically into the  interior of $\gamma$. Zero is outside $\gamma$. If  $\Log(z)$ is  a branch of the complex logarithm which  is holomorphic on the interior of $\gamma$, then
 $\Log\lambda_z$ is holomorphic on  $\{z\in\C:|z|<\eps^{(4)}\}$, and agrees with   $\p_{\phi,\psi}(t)$ on  $(-\eps^{(4)},\eps^{(4)})$.

 It follows that $\p_{\phi,\psi}(t)$ is real-analytic on $(-\eps_\theta(\phi),\eps_\theta(\phi))$, where $\eps_\theta(\phi):=\eps^{(4)}$.
\end{proof}

\begin{lemma}\label{l.SGP-for-t-psi}
There exists $\eps_\theta(\phi)>0$ such that for every $\psi\in\mathcal H_\beta$ for which $\|\psi\|_\beta\leq 1$,
 $\phi+t\psi$ has the spectral gap property for each $|t|<\eps_\theta(\phi)$.
\end{lemma}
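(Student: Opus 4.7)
My plan is to show that the same Banach space $(\mathcal L,\|\cdot\|_{\mathcal L})$ which witnesses the spectral gap of $\phi$ (guaranteed by the Cyr--Sarig theorem cited in \S\ref{s.SGP}) also witnesses the spectral gap of $\phi+t\psi$, for all $|t|$ smaller than some $\eps_\theta(\phi)>0$ independent of $\psi$. I would take $\eps_\theta(\phi)$ to be at most the constant produced by Lemma~\ref{l.perturb-thry}, possibly shrinking it a little further below.

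Properties (a), (b), (c), and (g) of the spectral gap definition are intrinsic to the space $\mathcal L$ and so are inherited for free. For (d), the factorization $L_{\phi+t\psi}=L_\phi\circ M_{\exp(t\psi)}$ does the job: since $\psi\in\mathcal H_\beta$ is bounded and $|t|\leq 1$, the function $\exp(t\psi)$ still lies in $\mathcal H_\beta$, so property (g) makes $M_{\exp(t\psi)}$ bounded on $\mathcal L$, and composing with the bounded operator $L_\phi$ gives a bounded operator on $\mathcal L$.

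The core is property (e). Here I would reuse the analysis already carried out in the proof of Lemma~\ref{l.perturb-thry}. Analytic perturbation theory applied to $L_z:=L_{\phi+z\psi}$ yields, for $|z|<\eps^{(4)}$, a spectral decomposition consisting of a simple eigenvalue $\lambda_z$ inside the contour $\gamma$ about $\lambda_0=e^{P_G(\phi)}$ and a compact set $K_z$ inside the contour $\gamma'$ of radius $R'<\lambda_0$; the projector $P_z$ is one-dimensional and satisfies $P_zL_z=L_zP_z=\lambda_zP_z$. For real $t$, set $N_t:=L_t-\lambda_tP_t=L_t(I-P_t)$; its spectrum is $K_t\cup\{0\}\subset\{|w|<R'\}$. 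By continuity of $t\mapsto\lambda_t$ (a consequence of the holomorphy shown in Lemma~\ref{l.perturb-thry}) I can shrink $\eps_\theta(\phi)$ so that $R'<|\lambda_t|$ uniformly. Lemma~\ref{l.trivial}, combined with the convergence $\lambda_t^{-n}L_t^n\to P_t$ in $\mathcal L$ (itself inherited from the spectral gap picture via property (c)), identifies $\lambda_t$ with $\exp P_G(\phi+t\psi)$, yielding (e).

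For property (f), given $\vf\in\mathcal H_\beta$ with $\|\vf\|_\beta\leq 1$, I would expand
\[
L_{\phi+t\psi+z\vf}=L_{\phi+t\psi}\circ M_{\exp(z\vf)}=\sum_{n\geq 0}\frac{z^n}{n!}L_{\phi+t\psi}M_\vf^n,
\]
and invoke property (g) to get $\|M_\vf\|\leq\|\vf\|_\beta\leq 1$, so the series converges absolutely in operator norm on all of $\C$, giving boundedness and entireness in $z$. The main obstacle is ensuring that $\eps_\theta(\phi)$ does not depend on $\psi$, but this is exactly what the careful bookkeeping of the constants $\eps^{(1)},\ldots,\eps^{(4)}$ in the proof of Lemma~\ref{l.perturb-thry} was designed to deliver; consequently, the present lemma is essentially a corollary of that machinery, with the only new work being to repackage its conclusions into the spectral gap axioms (a)--(g).
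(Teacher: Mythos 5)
Your proposal is correct and takes essentially the same approach as the paper: reuse the Banach space $\mathcal L$ from the SPR/spectral gap theory of $\phi$, observe that (a)--(d), (g) transfer for free or via property (g), obtain (e) from the perturbation-theoretic setup already built in Lemma~\ref{l.perturb-thry} (including the identification $\lambda_t=\exp P_G(\phi+t\psi)$), and verify (f) by expanding $L_{\phi+t\psi+z\vf}$ as a norm-convergent power series using property (g). The only cosmetic differences are that your extra ``shrinking so that $R'<|\lambda_t|$'' step is already automatic from the choice of the contours $\gamma,\gamma'$ in the previous proof, and that you argue (d) via $\exp(t\psi)\in\mathcal H_\beta$ rather than via the Taylor series \eqref{Taylor-L}; both are valid.
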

\begin{proof}
We continue with the notation of the proof of the previous lemma. Take $\eps:=\eps^{(4)}(\theta,\phi)$ and $t$ real such that $|t|<\eps$. We claim that if $\|\psi\|_\beta\leq 1$, then $\phi+t\psi$ satisfies the spectral gap property with the Banach space $\mathcal L$ from the previous proof.

Property (a) for $\phi+t\psi$ follows from property (a) for $\phi$, because $\psi$ is bounded.
Properties (b), (c) and (g) are obvious, because they do not involve $t$. Property (d) is because thanks to (g),    the series \eqref{Taylor-L} converges in norm.

Property (e) is because
 for all  $|t|<\eps^{(1)}$,
  $\|L_t-L_\phi\|<\delta_0$,
 and therefore there is a projection $P_t$ onto a one-dimensional space such that $P_t L_t=L_t P_t$, $L_t P_t=\lambda_t P_t$,
 and so that the spectrum of $N_t:=L_t(I-P_t)$ is contained in a disc with center at the origin and radius strictly less than $|\lambda_t|$.
 In addition, as we saw at the end of the previous proof, if  $|t|<\eps^{(4)}$, then  $\lambda_t=\exp \p_{\phi,\psi}(t)=\exp P_G(\phi+t\psi)$.

 Property (f) is because for every $\psi'\in \mathcal H_\beta$,
 we can apply \eqref{Taylor-L} to the potential $\phi+t\psi$:
 $
L_{\phi+t\psi+z\psi'}=L_{\phi+t\psi}M_{\exp (z\psi')}=\sum_{n=0}^\infty \frac{z^n}{n!} L_{t} M_{\psi'}^n.
$
The series converges absolutely in operator norm by properties (f) and (g) for $\phi$.
In particular, it is bounded and analytic in $z$ on $\C$.
\end{proof}

\begin{lemma}\label{l.Cauchy-Bounds}
There exist $\eps_\theta(\phi),M_\theta(\phi)>0$  so that for every $\psi\in\mathcal H_\beta$ such that $\|\psi\|_\beta\leq 1$ and for every $|t|<\eps_\theta(\phi)$,  $|\p_{\phi,\psi}'(t)|, |\p_{\phi,\psi}''(t)|, |\p_{\phi,\psi}'''(t)|\leq M_\theta(\phi)$.
\end{lemma}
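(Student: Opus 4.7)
The plan is a Cauchy-estimates argument applied to the logarithm of the leading eigenvalue $\lambda_z$ of $L_{\phi+z\psi}$, exploiting the $\psi$-independent analyticity and uniform bounds established in the proof of Lemma~\ref{l.perturb-thry}. Set $\eps_\theta(\phi) := \tfrac{1}{2}\eps^{(4)}$, where $\eps^{(4)} = \eps^{(4)}(\theta,\phi)$ is the radius produced there. On the disc $\{z\in\C:|z|<\eps^{(4)}\}$, the map $z\mapsto \lambda_z$ is holomorphic, lies in the interior of the fixed circle $\gamma$ around $\lambda_0 = \exp P_G(\phi)$ (of radius $R = R(\theta,\phi)$ with $R<\lambda_0$), and on the real axis satisfies $\Log \lambda_t = \p_{\phi,\psi}(t)$. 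Both $\gamma$ and $R$ were chosen in a way that depends only on the spectral data of $L_\phi$ and on $\theta$, and in particular not on $\psi$.

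Next, I would establish a uniform bound $|\Log \lambda_z|\leq M_0 = M_0(\theta,\phi)$ on $\{|z|<\eps^{(4)}\}$. The upper bound $|\lambda_z|\leq \|L_z\|\leq e^{|z|}\|L_\phi\|\leq e\|L_\phi\|$ is immediate from the series representation \eqref{Taylor-L}, property (g), and $\|\psi\|_\beta\leq 1$. The lower bound $|\lambda_z|\geq \lambda_0 - R>0$ follows from the fact that $\lambda_z$ lies inside $\gamma$. Picking a branch of $\Log$ holomorphic on a simply connected neighborhood of the disc enclosed by $\gamma$ which avoids $0$, we obtain
\[
|\Log \lambda_z|\leq M_0 := \log(e\|L_\phi\|) + |\!\arg \text{ on interior of }\gamma| + |\log(\lambda_0-R)|,
\]
a quantity depending only on $\theta,\phi$.

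Finally, Cauchy's estimates on the disc $\{|z|<\eps^{(4)}\}$ centered at a real $t$ with $|t|<\eps_\theta(\phi) = \eps^{(4)}/2$ give
\[
\bigl|\p_{\phi,\psi}^{(k)}(t)\bigr| = \bigl|(\Log \lambda_z)^{(k)}|_{z=t}\bigr| \leq \frac{k!\,M_0}{(\eps^{(4)}-|t|)^k}\leq \frac{k!\,M_0}{(\eps^{(4)}/2)^k}
\]
for $k=1,2,3$. Setting $M_\theta(\phi) := 6 M_0\,(\eps^{(4)}/2)^{-3}$ yields the desired simultaneous bound on the three derivatives.

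The only real obstacle is bookkeeping: one must verify that every constant carried over from the proof of Lemma~\ref{l.perturb-thry} ($\eps^{(1)}, \ldots, \eps^{(4)}$, the radii $R,R'$ of $\gamma,\gamma'$, and the resolvent bound $K$) is indeed independent of $\psi$ given the normalization $\|\psi\|_\beta\leq 1$. This uniformity was built in throughout that proof (it was the reason for the careful construction there), so the bounds on $\lambda_z$ inherited in the present lemma are uniform, and the Cauchy estimates then yield the uniform $\psi$-independent constants $\eps_\theta(\phi), M_\theta(\phi)$.
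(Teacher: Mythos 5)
Your proof is correct and follows the same route as the paper's: extend $\p_{\phi,\psi}$ to the holomorphic function $\Log\lambda_z$ on $\{|z|<\eps^{(4)}\}$ with a $\psi$-independent uniform bound (coming from $\lambda_z$ being trapped between the fixed circles $\gamma'$ and $\gamma$), and then apply Cauchy's estimates on the half-radius disc. The paper leaves the uniform bound on $|\Log\lambda_z|$ implicit whereas you spell it out (the $e\|L_\phi\|$ upper bound is not strictly needed since $|\lambda_z|\le\lambda_0+R$ already suffices), but this is just a presentational difference.
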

\begin{proof}
Let $\eps^{(4)}$ and $\delta_0$ be as in  the proof of Lemma~\ref{l.perturb-thry}. Then  $\p_{\phi,\psi}(t)$ extends to a holomorphic function $f(z)=\Log\lambda_z$ on a neighborhood of   $\{z:|z|<\eps^{(4)}\}$, where  $\lambda_z:=\lambda(P_z)$ is an eigenvalue of  operator $L_z$ such that $\|L_z-L_\phi\|<\delta_0$, and $\Log(z)$ is a suitable branch of the complex logarithm.

By the choice of $\delta_0$, $\lambda_z$ is in the interior of  some fixed circle $\gamma$ and outside some fixed circle $\gamma'$ surrounding zero, with $\gamma,\gamma'$ independent of $\psi$.
So $|f(z)|$ is uniformly bounded on $\{z:|z|<\eps^{(4)}\}$ by some constant which is independent of $\psi$.

Take $\eps:=\frac{1}{2}\eps^{(4)}$.
The lemma now follows from Cauchy's integral formula for the derivatives of $f(z)$ (``Cauchy's bounds").
 \end{proof}

\begin{lemma}\label{l.Hartogs}
There exists $\eps_\theta(\phi)>0$ such that for all $\psi,\vf\in\mathcal H_\beta$ for which $\|\psi\|_\beta,\|\vf\|_\beta\leq 1$, the function
$
p(s,t):=P_G(\phi+t\psi+s\vf)
$
has continuous partial derivatives of all orders on $(-\eps_\theta(\phi),\eps_\theta(\phi))^2$.
\end{lemma}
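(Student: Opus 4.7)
The plan is to extend the perturbation-theoretic arguments of Lemma~\ref{l.perturb-thry} from one complex parameter to two simultaneously. I would define
\[
L_{z,w}:=L_{\phi+z\psi+w\vf}\quad\text{for } (z,w)\in\C^2.
\]
Using property (g) of the Banach space $\mathcal L$ coming from the SPR spectral gap of $\phi$, the multiplication operators $M_\psi,M_\vf$ are bounded on $\mathcal L$ with $\|M_\psi\|,\|M_\vf\|\leq 1$, and the double Taylor expansion
\[
L_{z,w}=L_\phi M_{\exp(z\psi+w\vf)}=\sum_{k,\ell\geq 0}\frac{z^k w^\ell}{k!\,\ell!}L_\phi M_\psi^k M_\vf^\ell
\]
converges absolutely in operator norm on all of $\C^2$. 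Thus $(z,w)\mapsto L_{z,w}$ is jointly entire as a Banach-space-valued map, with $\|L_{z,w}-L_\phi\|\leq C(\phi,\theta)(|z|+|w|)$ and $C(\phi,\theta)$ independent of the choice of $\psi,\vf$ subject to $\|\psi\|_\beta,\|\vf\|_\beta\leq 1$.

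Next I would choose $\eps_\theta(\phi)>0$ small enough that $\|L_{z,w}-L_\phi\|<\delta_0$ whenever $|z|,|w|<\eps_\theta(\phi)$, where $\delta_0=\delta_0(\phi,\theta)$ is the perturbation threshold from the proof of Lemma~\ref{l.perturb-thry}. On this bidisc the same contours $\gamma,\gamma'$ used there continue to isolate a simple eigenvalue $\lambda(z,w)$ of $L_{z,w}$, and the spectral projection
\[
P(z,w):=\frac{1}{2\pi i}\oint_\gamma (\xi I-L_{z,w})^{-1}\,d\xi
\]
is well-defined. Joint holomorphy of $P(z,w)$ would then follow because, for each fixed $\xi\in\gamma$, the resolvent $(\xi I-L_{z,w})^{-1}$ depends jointly holomorphically on $(z,w)$ (expand it as a convergent Neumann series around any base point in the bidisc, exploiting joint entireness of $L_{z,w}$), with norm locally bounded uniformly in $\xi\in\gamma$, so the contour integral preserves holomorphy. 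Choosing a bounded linear functional $\Phi$ on $\mathcal L$ with $\Phi(h_0)\neq 0$ and shrinking $\eps_\theta(\phi)$ if necessary so that $\Phi(P(z,w)h_0)$ does not vanish, one represents
\[
\lambda(z,w)=\frac{\Phi(L_{z,w}P(z,w)h_0)}{\Phi(P(z,w)h_0)}
\]
as a ratio of jointly holomorphic non-vanishing functions on the bidisc.

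The final step would be to identify $\log\lambda(t,s)=P_G(\phi+t\psi+s\vf)=p(s,t)$ on the real slice $(-\eps_\theta(\phi),\eps_\theta(\phi))^2$, using verbatim the argument from the last paragraph of the proof of Lemma~\ref{l.perturb-thry}: Lemma~\ref{l.trivial} realizes $P_G(\phi+t\psi+s\vf)$ as the limit of $\tfrac{1}{n}\log(L_{t,s}^n 1_{[a]})(\un x)$, which via $\lambda_{t,s}^{-n}L_{t,s}^n\to P(t,s)$ equals $\log\lambda(t,s)$. Since the restriction of a jointly holomorphic function to a real slice is real-analytic, $p(s,t)$ is real-analytic on the real bidisc and in particular admits continuous partial derivatives of all orders there. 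The only matter requiring care—and the main (minor) obstacle—is checking that all constants inherited from Lemma~\ref{l.perturb-thry} (the threshold $\delta_0$, the resolvent bound $K'$, the contours $\gamma,\gamma'$, and the functional $\Phi$) can be taken independent of $\psi$ and $\vf$; this is already built into the normalization $\|\psi\|_\beta,\|\vf\|_\beta\leq 1$ adopted throughout \S\ref{s.TDF-for-SPR}, so the proof is essentially a two-variable rerun of the one-variable argument already established.
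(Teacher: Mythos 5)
Your proof is correct, and it takes a genuinely different (and arguably more elementary) route to joint holomorphy than the paper does. The paper's proof of this lemma earns its tag \texttt{l.Hartogs}: it shows that $(z,w)\mapsto P_{z,w}$ is holomorphic \emph{separately} in each variable (by literally repeating the one-variable difference-quotient estimate from Lemma~\ref{l.perturb-thry} with the other variable frozen), and then invokes Hartogs' theorem to upgrade separate holomorphy to joint holomorphy. You instead argue joint holomorphy directly: since $(z,w)\mapsto L_{z,w}$ is jointly entire as a Banach-space-valued map (via the double power series with $\|L_\phi M_\psi^k M_\vf^\ell\|\le\|L_\phi\|$ from property (g)), the Neumann series
\[
(\xi I-L_{z,w})^{-1}=\sum_{n\ge 0}(\xi I-L_{z_0,w_0})^{-1}\bigl[(L_{z,w}-L_{z_0,w_0})(\xi I-L_{z_0,w_0})^{-1}\bigr]^n
\]
converges in operator norm uniformly on a bidisc about $(z_0,w_0)$ and uniformly in $\xi\in\gamma$, exhibiting the resolvent --- and hence, after integrating over $\gamma$, the spectral projection $P(z,w)$ and the eigenvalue ratio $\lambda(z,w)$ --- as jointly holomorphic. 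This avoids Hartogs entirely, at the modest cost of checking the joint power-series machinery explicitly; the paper's route, by contrast, maximally reuses the one-variable computation already done. Both establish real-analyticity of $p(s,t)$ on the real bidisc, which of course gives continuous partial derivatives of all orders, and you correctly note that the key uniformity constants ($\delta_0$, $K'$, the contours, the functional) are independent of $\psi,\vf$ under the normalization $\|\psi\|_\beta,\|\vf\|_\beta\le 1$.
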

\begin{proof}
The proof is similar to the proof of  Lemma~\ref{l.perturb-thry}, so we only sketch it. Let $\mathcal L$ be the Banach space in that proof, and define  the operator $$L_{z,w} f:=L_{\phi+z\psi+w\vf}f\ \ \ \ (z,w\in\C).
$$
Then
$
L_{z,w}=L_\phi M_{\exp z\psi} M_{\exp w\vf}=\sum_{n,m=0}^\infty
\frac{z^n w^m}{n! m!}L_\phi M_{\psi}^n M_{\vf}^m,
$
and the series converges in norm for all $z,w\in\C$,  because
$\|L_{\phi} M_{\psi}^n M_{\vf}^m\|\leq \|L_{\phi}\|\|\psi\|_\beta^n\|\vf\|_\beta^m\leq \|L_\phi\|$ by property (g) of $\mathcal L$.
So $L_{z,w}$ are well-defined bounded linear operators on $\mathcal L$.

The series representation for $L_{z,w}$ implies that for all $|z|,|w|\leq 1$,$|z_0|,|w_0|\leq 1$,
\begin{align*}
&\|L_{z,w}-L_{z_0,w_0}\|\leq \|L_\phi\|{\sum_{n,m=0}^{\infty}}\frac{|z^n w^m-z^n_0 w^m_0|}{n! m!}\\
&\leq
\|L_\phi\|\underset{(n,m)\neq (0,0)}{\sum_{n,m=0}^{\infty}}\frac{|z|^n|w^m-w^m_0|+|w_0|^m|z^n-z_0^n|} {n! m!}\leq e^2\|L_\phi\|(|z-z_0|+|w-w_0|).
\end{align*}
In particular, if  $\delta_0$ is as in the proof of Lemma~\ref{l.perturb-thry}, then there exists $\kappa^{(1)}>0$ independent of $\psi,\vf$ such that  $\|L_{z,w}-L_\phi\|<\delta_0$ for all $|z|,|w|\leq \kappa^{(1)}$.

By the definition of $\delta_0$, for such $z,w$, the spectrum of $L_{z,w}$ does not intersect $\gamma$, therefore
$(\xi I-L_{z,w})^{-1}$ is well-defined and bounded for all $\xi\in\gamma$. Arguing as in the proof of Lemma~\ref{l.perturb-thry}, we find $0<\kappa^{(2)}<\kappa^{(1)}$ and $K>0$ independent of $\psi,\vf$ such that for all $|z|,|w|\leq \kappa^{(2)}$ and for all $\xi$ on $\gamma$,
$$
\|(\xi I-L_{z,w})^{-1}\|<K.
$$
As in the proof of that lemma, this can be used to show that $(z,w)\mapsto P_{z,w}$ is analytic separately in each of its variables on $\{(z,w)\in\C^2: |z|,|w|<\kappa_\beta^{(2)}\}$.

It follows that for every bounded linear functional $F$ on the Banach space of bounded linear operators on $\mathcal L$,
$
(z,w)\mapsto F(P_{z,w})
$
is holomorphic in $z$ and in $w$, whence
by Hartogs' theorem, in both variables on $\{(z,w):|z|,|w|<\kappa^{(2)}\}$.
In particular, $F(P_{z,w})$  has continuous partial derivatives of all orders there.

Since this holds for all bounded linear functionals $F$, $P_{z,w}$ has continuous partial derivatives of all orders on $\{(z,w):|z|,|w|<\kappa^{(2)}\}$.
We now continue exactly as in the proof of Lemma~\ref{l.perturb-thry} to construct $0<\kappa^{(4)}<\kappa^{(3)}<\kappa^{(2)}$  independent of $\psi,\vf$ and a branch of the complex logarithm $\Log(z)$ such that
\begin{enumerate}[(1)]
\item  $\Log\lambda(P_{z,w})$ has  partial derivatives of all orders on $\{(z,w):|z|,|w|<\kappa^{(3)}\}$
\item $\Log\lambda(P_{t,s})=P_G(\phi+t\psi+s\vf)
$
for all $t,s$ real such that $|t|,|s|<\kappa^{(4)}$.
\end{enumerate}
It follows that  $(t,s)\mapsto P_G(\phi+t\psi+s\vf)$ has continuous partial derivatives of all orders on $\{(t,s)\in\R^2: |t|,|s|<\kappa_\beta^{(4)}\}$.
\end{proof}

\begin{proof}[{\bf Proof of Theorem~\ref{t.TDF-for-SPR}}]
Let $\eps:=\eps(\theta,\phi)$ be the minimum of the epsilons in the previous lemmas, and let $M:=M_\theta(\phi)$ be as in Lemma \ref{l.Cauchy-Bounds}.

\medskip
\noindent
{\bf Part (1):} This is Lemma~\ref{l.perturb-thry}.

\medskip
\noindent
{\bf Part (2):}  By Lemma~\ref{l.SGP-for-t-psi}, $\phi+t\psi$ has the spectral gap property.  As explained in sections \ref{s.SGP} and \ref{s.eq}, this implies that $\phi$ is strongly positively recurrent, whence positively recurrent. Let $m_t$ denote the RPF measure of $\phi+t\psi$ (see \S~\ref{s.eq}).

$\Sigma^+$ is topologically mixing with  finite Gurevich entropy $h$. So $h_{m_t}(\sigma)\leq h<\infty$, and by Theorem \ref{t.eq-measure},  $m_t$ is the unique equilibrium measure of $\phi+t\psi$.

\medskip
\noindent
{\bf Part (3):} The proof of this and the following part is essentially in \cite{Guivarch-Hardy} and \cite[Chapter 4]{Parry-Pollicott-Asterisque}, but we give it for completeness.

Let $\lambda_z, L_z,P_z$ be as in the proof of Lemma \ref{l.perturb-thry}.
By part (2), if $t$ is real and  $|t|<\eps$, then
 $\phi+t\psi$ is positively recurrent, whence by the generalized Ruelle's Perron-Frobenius theorem, there is a positive continuous function $h_t$ and a measure $\nu_t$ finite and positive on cylinders such that
$$
L_t h_t=\lambda_t, L_t^\ast \nu_t=\lambda_t\nu_t, \int h_t d\nu_t=1.
$$
By \cite[Lem 8.1]{Cyr-Sarig}, $h_t\in\mathcal L$, and  $P_t f=h_t\int f d\nu_t$ for all $f\in\mathcal L$.

We saw in the proof of Lemma \ref{l.perturb-thry} that $t\mapsto \lambda_t, P_t, L_t$ are differentiable, and  $\lambda_t=\exp\p_{\phi,\psi}(t)$.
By \eqref{Taylor-L}, $L_t'f:=\frac{d}{dt}L_t f=L_t(\psi f)=L_tM_\psi f$.
Differentiating both sides of the identity $L_t P_t=\lambda_t P_t$ gives
\[
L_tM_\psi P_t+L_tP_t' = {\mathfrak p}_{\phi,\psi}'(t)\lambda_tP_t +\lambda_tP_t'.
\]
We  multiply by $P_t$ and cancel the equal  terms $P_t L_t P_t'=\lambda_t P_t P_t'$, with the result
\[
\lambda_tP_tM_\psi P_t= {\mathfrak p}_{\phi,\psi}'(t)\lambda_tP_t.
\]
We now apply the operators on the two sides of the equation to $h_t$ (which belongs to $\mathcal L$), and obtain
$
\lambda_t P_t(\psi h_t)=\p_{\phi,\psi}'(t)\lambda_t h_t
$. Substituting $t=0$ and noting that $\lambda_0=\exp P_G(\phi)\neq 0$, we obtain
$
\p_{\phi,\psi}'(0)=\int \psi h_0 d\nu_0.
$
Since $h_0\nu_0$ is the equilibrium measure of $\phi$, we are done.

\medskip
\noindent
{\bf Part (4):}
It is enough to consider the special case when $P_G(\phi)=0$ and $\int\psi d m_0=0$, since the general case can be reduced to this one by subtracting constants from $\phi$ and $\psi$. In this case $\lambda_0=1$ and $\lambda_0'=\frac{d}{dt}\big|_{t=0}\lambda_t=\int\psi dm_0=0$.

Fix $n$, differentiate $L^n_t P_t=\lambda_t^n P_t$ twice, then apply  $P_t$  from the left, and substitute $t=0$, dropping all the terms which contain $\lambda_0'$.  Collecting terms we obtain
\[
n{\mathfrak p}_\psi''(0)P_0=P_0M_{\psi_n}^2P_0+2P_0M_{\psi_n}P'_0.
\]
Applying this to $h_0/n$,
writing $H'_0=P'_0h_0$ and using $P_0f=h_0\nu_0(f)$,
we get
\begin{equation}\label{volt}
{\mathfrak p}_\psi''(0) =\frac{1}{n}m_0\left(\left(\psi_n\right)^2\right)+
2\nu_0\left(\frac{\psi_n}{n}H'_0\right).
\end{equation}
To complete the proof, it remains to show that $\nu_0\left(\frac{\psi_n}{n}H'_0\right)\to 0$.

{\em The integrand $\frac{\psi_n}{n}H_0'$ tends to zero $\nu_0$-a.e.:} First,  $\psi$ is bounded and $m_0$ is ergodic, so  $\frac{\psi_n}{n}\to \int \psi dm_0=0$ $m_0$-almost everywhere. Second,  $h_0>0$, so $\nu_0=\frac{1}{h_0}m_0\ll m_0$, and $\frac{\psi_n}{n}\to 0$ $\nu_0$-almost everywhere.

{\em The integrand $\frac{\psi_n}{n}H_0'$ is dominated by an $L^1(\nu_0)$--function:}  $h_0\in\mathcal L$, so $H'_0\in \mathcal L$, whence by property (b) of $\mathcal L$, $|H'_0|\in\mathcal L$. So $P_0 |H_0'|\in\mathcal L$. We saw above that $P_0 f=h_0 \nu_0(f)$ for all $f\in\mathcal L$. In particular, $\nu_0(|H_0'|)<\infty$. So  $|\frac{\psi_n}{n}H_0'|\leq \|\psi\|_\infty |H_0'|$ and $H_0'\in L^1(\nu_0)$. This is  the dominating $L^1$ function.

 By the dominated convergence theorem, $\nu_0\left(\frac{\psi_n}{n}H'_0\right)\to0$.

\medskip
\noindent
{\bf Parts (5) and (6):} This is the content of Lemmas \ref{l.Cauchy-Bounds} and \ref{l.Hartogs}.
\end{proof}

\begin{proof}[{\bf Proof of Theorem~\ref{t.sigma}}]\ \\
\noindent
{\bf Part (2):}
Let $\psi_n:=\psi+\psi\circ\sigma+\cdots+\psi\circ\sigma^{n-1}$.
Recall the definitions
\[
\p_{\phi,\psi}(t)=P_G(\phi+t\psi)=\lim_{n\to\infty}\frac{1}{n}\log Z_n(\phi+t\psi,a),
\]
\[\text{ where }Z_n(\phi+t\psi, a)=\sum_{\sigma^n(\un{x})=\un{x}} e^{\phi_n(\un{x})+t\psi_n(\un{x})}1_{[a]}(\un{x}).
\]
If $\psi=\vf+r-r\circ\sigma+c$, then $\psi_n(\un{x})=\vf_n(\un{x})+cn$ for all $x$ such that $\sigma^n(\un{x})=\un{x}$. Since the Gurevich pressure is defined in terms of periodic orbits,
$
\p_{\phi,\psi}(t)=\p_{\phi,\vf}(t)+ct
$. So $\p_{\phi,\psi}''=\p_{\phi,\vf}''$, and
by Theorem~\ref{t.TDF-for-SPR}, $\sigma^2_{m}(\psi)=\p_{\phi,\psi}''(0)=\p_{\phi,\vf}''(0)=\sigma^2_{m}(\vf)$.

\medskip
\noindent
{\bf Part (1):} Suppose $\psi=r-r\circ\sigma+c$ with $r$ continuous and $c$ a constant. Then $\sigma_m(\psi)=\sigma_m(0)=0$, by part 2.

Now suppose $\sigma_m(\psi)=0$. Without loss of generality, $\int\psi dm=0$ (otherwise work with $\psi-\int\psi dm$).
 By Lemma~\ref{l.SGP-for-t-psi}, $\phi$ has the spectral gap property, and we are in the situation discussed in \cite[Appendix B]{Cyr-Sarig}.
 The expression $\sigma^2$ defined there agrees with $\sigma^2_{m}(\psi)$ by \cite[Equation (8.4)]{Cyr-Sarig} and part (4) of Theorem~\ref{t.TDF-for-SPR}.\footnote{Note that  the $\lambda_t$ in \cite{Cyr-Sarig} is what we call in this paper $\lambda_{it}$.} Using the argument in  \cite[pp.\ 664-665]{Cyr-Sarig}, we find that $\psi$ must be a coboundary with a continuous transfer function.

\medskip
\noindent
{\bf Part (3):} Let $M$ be the constant from Theorem \ref{t.TDF-for-SPR}(5).
If $\psi\equiv 0$, then $\sigma_m(\psi)=0$ and there is nothing to prove. Suppose $\psi\not\equiv 0$. It is easy to check that $\sigma_{m}(t\psi)=|t|\sigma_{m}(\psi)$.
So
$
{\sigma_{m}(\psi)}={\|\psi\|_\beta}\sigma_{m}(\vf)$, where $\vf:=\frac{\psi}{\|\psi\|_\beta}$. Since $\|\vf\|_\beta=1$,
$
\sigma_m^2(\vf)=\p_{\phi,\vf}''(0)\leq M.
$
So $\sigma_m(\psi)\leq \|\psi\|_\beta\sigma_m(\vf)\leq M^{1/2}\|\psi\|_\beta\leq M\|\psi\|_\beta
$, where the last inequality is because $M>1$.
\end{proof}

\section{The restricted pressure function}

Suppose $\sigma:\Sigma^+\to\Sigma^+$ is a topologically mixing countable Markov shift with finite Gurevich entropy $h$, and $\phi$ is a weakly H\"older continuous function such that $\sup\phi<\infty$. Fix $\psi\in\mathcal H_\beta$ which is not cohomologous to a constant via a continuous transfer function. By Theorem~\ref{t.pressure-convex-prop} $\p_{\phi,\psi}'(-\infty)<\p_{\phi,\psi}'(+\infty)$, and we can make the following definition:

\begin{definition}
The {\em restricted pressure function of $\phi$, constrained on $\psi$,}
is  \newline
$\q_{\phi,\psi}:(\p_{\phi,\psi}'(-\infty),\p'_{\phi, \psi}(+\infty))\to\R$, given by
\[
\q_{\phi,\psi}(a):=\sup\{h_\mu(\sigma)+\int\phi d\mu: \mu\in\mathfs M(\Sigma^+), \int \psi d\mu=a\}.
\]
\end{definition}
\noindent

\begin{lemma}\label{l.h-is-concave}
Under the assumptions above,
$\q_{\phi,\psi}$ is a well-defined finite valued concave function, and $\q_{\phi,\psi}$  is bounded from above.
\end{lemma}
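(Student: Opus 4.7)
The lemma makes three separate assertions about $\q_{\phi,\psi}$: it is bounded above, finite valued, and concave. I would attack them in that order, since the first is immediate, the second requires a convex-combination argument, and only the third needs real work.

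For the upper bound, every $\mu\in\mathfs M(\Sigma^+)$ satisfies $h_\mu(\sigma)\leq h<\infty$ by Gurevich's variational principle and $\int\phi\,d\mu\leq\sup\phi<\infty$, so the supremum defining $\q_{\phi,\psi}(a)$ is at most $h+\sup\phi$ for every $a$; this gives both finiteness from above and boundedness. For concavity, given $a_1,a_2\in(\p_{\phi,\psi}'(-\infty),\p_{\phi,\psi}'(+\infty))$ and $s\in(0,1)$, I would pick $\eps$-near-optimizers $\mu_i$ with $\int\psi\,d\mu_i=a_i$ and form $\mu:=s\mu_1+(1-s)\mu_2$. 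Affinity of the metric entropy map on $\mathfs M(\Sigma^+)$ and linearity of the two integrals then give $\int\psi\,d\mu=sa_1+(1-s)a_2$ and $h_\mu(\sigma)+\int\phi\,d\mu=s(h_{\mu_1}(\sigma)+\int\phi\,d\mu_1)+(1-s)(h_{\mu_2}(\sigma)+\int\phi\,d\mu_2)$, so $\mu$ is admissible for $\q_{\phi,\psi}(sa_1+(1-s)a_2)$ with pressure at least a convex combination of near-optimal pressures; sending $\eps\to 0$ gives concavity.

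The main obstacle is showing $\q_{\phi,\psi}(a)>-\infty$ throughout the open interval. Theorem~\ref{t.pressure-convex-prop}(4) supplies invariant measures with $\int\psi\,d\mu$ arbitrarily close to $\p_{\phi,\psi}'(\pm\infty)$, but a priori any such measure could have $\int\phi\,d\mu=-\infty$, contributing nothing to the supremum. I would bypass this by working inside the convex subset
\[
\mathfs M^\ast(\Sigma^+):=\{\mu\in\mathfs M(\Sigma^+):\textstyle\int\phi\,d\mu>-\infty\},
\]
which is convex because $\mu\mapsto\int\phi\,d\mu$ is affine wherever finite. For each $t\in\R$ and $\eps>0$, the variational principle \eqref{eq:measure_pressure} applied to $\phi+t\psi$ produces a measure $\mu_{t,\eps}\in\mathfs M(\Sigma^+)$ with
\[
h_{\mu_{t,\eps}}(\sigma)+\int\phi\,d\mu_{t,\eps}+t\int\psi\,d\mu_{t,\eps}>\p_{\phi,\psi}(t)-\eps.
\]
Since $h_{\mu_{t,\eps}}\leq h$, $|\int\psi\,d\mu_{t,\eps}|\leq\|\psi\|_\infty$, and $\p_{\phi,\psi}(t)>-\infty$, this forces $\int\phi\,d\mu_{t,\eps}>-\infty$, i.e.\ $\mu_{t,\eps}\in\mathfs M^\ast$. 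Combining with the trivial variational bound $h_{\mu_{t,\eps}}(\sigma)+\int\phi\,d\mu_{t,\eps}\leq\p_{\phi,\psi}(0)$ gives, for $t>0$, the estimate $\int\psi\,d\mu_{t,\eps}>(\p_{\phi,\psi}(t)-\p_{\phi,\psi}(0)-\eps)/t$, whose right-hand side tends to $\p_{\phi,\psi}'(+\infty)$ as $t\to+\infty$ by convexity; an analogous inequality covers $t\to-\infty$. Thus the image $\{\int\psi\,d\mu:\mu\in\mathfs M^\ast\}$, being the image of a convex set under a linear functional, is an interval whose closure contains both $\p_{\phi,\psi}'(\pm\infty)$ and hence contains the full open interval. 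For every $a$ in that interval any corresponding $\mu\in\mathfs M^\ast$ gives a finite value $h_\mu(\sigma)+\int\phi\,d\mu$ that is admissible in the definition of $\q_{\phi,\psi}(a)$, finishing the proof.
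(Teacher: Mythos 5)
Your proof is correct, and for the one nontrivial step---showing $\q_{\phi,\psi}(a)>-\infty$ on the open interval---you take a genuinely different route from the paper. The paper argues that the constraint set $\{\mu:\int\psi\,d\mu=a\}$ is nonempty by taking a convex combination of two measures with $\int\psi\,d\mu_1<a<\int\psi\,d\mu_2$, and then gets finiteness by replacing $\mu_1,\mu_2$ with ergodic measures supported on periodic orbits (using ergodic decomposition and weak-star approximation of ergodic measures by periodic-orbit measures), for which $\int\phi\,d\mu_i$ is automatically finite. You instead work inside the convex set $\mathfs M^\ast=\{\mu:\int\phi\,d\mu>-\infty\}$, produce $\eps$-near-optimizers $\mu_{t,\eps}$ of the tilted pressure $\p_{\phi,\psi}(t)$---which must lie in $\mathfs M^\ast$ because the entropy and $\psi$-terms are bounded---and show via the convexity estimate $\int\psi\,d\mu_{t,\eps}>(\p_{\phi,\psi}(t)-\p_{\phi,\psi}(0)-\eps)/t\to\p_{\phi,\psi}'(+\infty)$ (and its analogue at $-\infty$) that the interval $\{\int\psi\,d\mu:\mu\in\mathfs M^\ast\}$ has closure containing $[\p_{\phi,\psi}'(-\infty),\p_{\phi,\psi}'(+\infty)]$, hence contains the open interval. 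Your argument is more self-contained: it uses only the variational principle and elementary convexity, and sidesteps the weak-star periodic-orbit approximation (which, while standard, needs some care in the non-compact countable-alphabet setting). The paper's version is shorter. One minor presentation point: your concavity step invokes $\eps$-near-optimizers for $\q_{\phi,\psi}(a_i)$, which presupposes that those suprema are finite and the constraint sets nonempty; in a final write-up the finiteness paragraph should precede the concavity paragraph, as you in effect acknowledge by calling it ``the main obstacle.''
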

\begin{proof}
Since $h<\infty$ and  $\sup\phi<\infty$, $\mathfs M_\phi(\Sigma^+)=\mathfs M(\Sigma^+)$ and $h_\mu(\sigma)+\int\phi d\mu$ is well-defined for every $\mu\in\mathfs M(\Sigma^+)$.
By Theorem~\ref{t.pressure-convex-prop}, $a\in ({\mathfrak p}'_{\phi,\psi}(-\infty),{\mathfrak p}'_{\phi,\psi}(+\infty))$ if and only if  $\inf\{\int\psi d\mu\}<a<\sup\{\int\psi d\mu\}$,
therefore there exist two invariant measures $\mu_1,\mu_2$ such that $\int \psi d\mu_1<a<\int \psi d\mu_2$. Then $\int \psi d\mu=a$ for some convex combination $\mu$ of $\mu_1,\mu_2$, and the set
$\{\mu:\int\psi d\mu=a\}$  is non-empty. So the supremum in the definition of $\q_{\phi,\psi}(a)$ is over a non-empty set, and $\q_{\phi,\psi}(a)$ is well-defined.

Let's choose the measures  $\mu_1,\mu_2$ more carefully. A standard ergodic decomposition argument shows that we may choose $\mu_1,\mu_2$ to be ergodic. On countable Markov shifts,  $\mu_i$ generic orbits are limits of periodic orbits, therefore we may choose  $\mu_1,\mu_2$ to be   ergodic measures sitting on periodic orbits.  For such measures  $\int\phi d\mu_i>-\infty$, whence $\int \phi d\mu>-\infty$ for every convex combination of $\mu_1,\mu_2$. Looking at the argument in the previous section, we find that
$
\q_{\phi,\psi}(a)>-\infty.
$

This function is uniformly bounded above by $h+\sup\phi<\infty.
$
So $\q_{\phi,\psi}$ is finite on its domain, and bounded from above.

Concavity is because for all $a_1,a_2$ in the domain, for all $0\leq t\leq 1$, and for all $\eps>0$, if $\mu_1,\mu_2$ are invariant measures such that
$$
P_{\mu_i}(\phi)\geq \q_{\phi,\psi}(a_i)-\eps,\text{ and }\int \psi d\mu_i=a_i,
$$
then $\mu:=t\mu_1+(1-t)\mu_2$ is an invariant measure such that $\int\psi d\mu=ta_1+(1-t)a_2$ and hence by the affine properties of the Kolmogorov-Sinai entropy,
$
P_{\mu}(\phi)=tP_{\mu_1}(\phi)+(1-t) P_{\mu_2}(\phi).
$
So $\mathfrak{q}_{\phi,\psi}(ta_1+(1-t)a_2)\geq P_{\mu}(\phi)\geq t \mathfrak{q}_{\phi,\psi}(a_1)+(1-t)\mathfrak{q}_{\phi,\psi}(a_2)-\eps$. Since $\eps$ was arbitrary, the concavity of $\mathfrak{q}_{\phi,\psi}$ follows.
\end{proof}

The restricted pressure function  is understood very well for subshifts of finite type \cite[\S 3]{Babillot-Ledrappier-Lalley}, and for countable Markov shifts with the BIP property \cite{Morris-Zero-Temp}. In these cases, and if $\psi$ is not cohomologous to a constant, then  this function is smooth and strictly concave.

This is not true in general in the infinite alphabet case, because of the phenomenon of ``phase transitions" \cite{Sarig-CMP-2001},\cite{Sarig-CMP-2006}.
However, as the following theorem shows, in the SPR case  there is an explicit subinterval of the domain  where $\q_{\phi,\psi}$ is smooth and uniformly concave.
Crucially to the applications we have in mind, this interval  can be chosen in a way which  depends on $\psi$ only through $\E_{m}(\psi)$ and  $\sigma_{m}^2(\psi)$, where $m$ is the equilibrium measure of $\phi$.

\begin{lemma}\label{lemma-h}
Let $\Sigma^+$ be a topologically mixing countable Markov shift with finite Gurevich entropy. Let $\phi$ be a $\theta$-weakly H\"older continuous SPR potential such that $\sup\phi<\infty$.
Let $m$ denote the unique equilibrium measure of $\phi$, and suppose $e^{-\beta}\leq \theta$.
Then $\exists\delta_\theta(\phi),H_\theta(\phi)>0$ as follows.
For all $\psi\in\mathcal H_\beta$ such that $\|\psi\|_\beta\leq 1$, and $\sigma_{m}(\psi)\neq 0$:
\begin{enumerate}[(1)]
\item $
I_\psi:=\{t\in\R: |t-{\textstyle \int}\psi dm|<\delta_\theta(\phi)\sigma_{m}^4(\psi)\}
$
 is contained in the domain of $\mathfrak{q}_{\phi,\psi}$.
\item $\mathfrak{q}_{\phi,\psi}$ is uniformly bounded and  differentiable infinitely many times on $I_\psi$.

\item  If $a\in I_\psi$, then
    $
    -2\sigma_{m}^{-2}(\psi)\leq \mathfrak{q}_{\phi,\psi}''(a)\leq -\frac{1}{2}\sigma_{m}^{-2}(\psi),\ \ |\mathfrak{q}_{\phi,\psi}'''(a)|\leq H_\theta(\phi)\sigma_{m}^{-6}(\psi).
    $
In particular, $\mathfrak{q}_{\phi,\psi}$ is strictly concave on $I_\psi$.

\item If  $a_0:=\int\psi dm$, then
$
\mathfrak{q}_{\phi,\psi}(a_0)=P_G(\phi),\ \ \mathfrak{q}_{\phi,\psi}'(a_0)=0,\ \ \mathfrak{q}_{\phi,\psi}''(a_0)=-\frac{1}{\sigma_{m}^2(\psi)}.
$

\end{enumerate}
\end{lemma}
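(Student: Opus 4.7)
The plan is to exploit Legendre duality between $\p_{\phi,\psi}$ and $\q_{\phi,\psi}$. Unwinding the variational principle,
\[
\p_{\phi,\psi}(t) = \sup_\mu \Bigl\{h_\mu(\sigma)+{\textstyle\int}\phi\,d\mu + t{\textstyle\int}\psi\,d\mu\Bigr\} = \sup_a\bigl\{\q_{\phi,\psi}(a)+ta\bigr\},
\]
so $\p_{\phi,\psi}$ is the concave conjugate of $\q_{\phi,\psi}$. Since $\q_{\phi,\psi}$ is concave by Lemma~\ref{l.h-is-concave}, I would aim to recover it locally via the inverse Legendre transform $\q_{\phi,\psi}(a) = \p_{\phi,\psi}(t(a)) - t(a)\,a$, where $t(a)$ solves $\p'_{\phi,\psi}(t(a))=a$. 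The bulk of the work is to show that this equation is solvable quantitatively on $I_\psi$, and that the resulting explicit formula really equals $\q_{\phi,\psi}$ there.

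First I would fix $\eps:=\eps_\theta(\phi)$ and $M:=M_\theta(\phi)$ as in Theorem~\ref{t.TDF-for-SPR}, so that $\p_{\phi,\psi}$ is real-analytic on $(-\eps,\eps)$ with $|\p^{(k)}_{\phi,\psi}|\le M$ for $k=1,2,3$, $\p'_{\phi,\psi}(0)=\int\psi\,dm=:a_0$, and $\p''_{\phi,\psi}(0)=\sigma_m^2(\psi)$. Setting $\eta:=\min\{\eps,\sigma_m^2(\psi)/(2M)\}$ and using the third-derivative bound, the mean value theorem yields $\p''_{\phi,\psi}(t)\in[\tfrac12\sigma_m^2(\psi),\tfrac32\sigma_m^2(\psi)]$ for $|t|\le\eta$. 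Hence $\p'_{\phi,\psi}$ is a real-analytic diffeomorphism of $(-\eta,\eta)$ onto an open interval centered at $a_0$ of radius at least $\tfrac12\sigma_m^2(\psi)\cdot\eta \ge \sigma_m^4(\psi)/(4M)$. With the choice $\delta_\theta(\phi):=1/(4M)$ this image contains $I_\psi$, giving $I_\psi\subset(\p'_{\phi,\psi}(-\infty),\p'_{\phi,\psi}(+\infty))$ and a real-analytic inverse $t(a):=(\p'_{\phi,\psi})^{-1}(a)$ on $I_\psi$. (Since $\sigma_m^2(\psi)\le M$ by Theorem~\ref{t.sigma}(3), at the price of enlarging $M$ once I may assume $\eta\le\eps$ uniformly.)

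The main identification to establish is $\q_{\phi,\psi}(a) = \p_{\phi,\psi}(t(a)) - t(a)\,a$ for $a\in I_\psi$. The upper bound is immediate: for any invariant $\mu$ with $\int\psi\,d\mu=a$ and any real $t$,
\[
h_\mu(\sigma)+{\textstyle\int}\phi\,d\mu = P_\mu(\phi+t\psi) - ta \le \p_{\phi,\psi}(t)-ta,
\]
so the choice $t=t(a)$ and the supremum over $\mu$ yield $\q_{\phi,\psi}(a)\le \p_{\phi,\psi}(t(a))-t(a)\,a$. For the matching lower bound I would invoke Theorem~\ref{t.TDF-for-SPR}(2)--(3): the unique equilibrium measure $m_{t(a)}$ of $\phi+t(a)\psi$ exists and satisfies $\int\psi\,dm_{t(a)}=\p'_{\phi,\psi}(t(a))=a$, so $m_{t(a)}$ is admissible in the supremum defining $\q_{\phi,\psi}(a)$, which gives $\q_{\phi,\psi}(a)\ge h_{m_{t(a)}}(\sigma)+\int\phi\,dm_{t(a)} = \p_{\phi,\psi}(t(a))-t(a)\,a$.

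Once this identification is in hand, (2)--(4) follow mechanically. Real-analyticity of $t(\cdot)$ and of $\p_{\phi,\psi}$ gives $\q_{\phi,\psi}\in C^\infty(I_\psi)$, and implicit differentiation of $\p'_{\phi,\psi}(t(a))=a$ combined with the formula yields $\q'_{\phi,\psi}(a)=-t(a)$, $\q''_{\phi,\psi}(a)=-1/\p''_{\phi,\psi}(t(a))$, and $\q'''_{\phi,\psi}(a)=\p'''_{\phi,\psi}(t(a))/\p''_{\phi,\psi}(t(a))^3$; the bounds on $\p''_{\phi,\psi}$ and $\p'''_{\phi,\psi}$ then give the estimates in (3) with $H_\theta(\phi):=8M$, and evaluating at $a=a_0$ (where $t(a_0)=0$) gives (4). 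Uniform boundedness of $\q_{\phi,\psi}$ on $I_\psi$ follows from the formula together with $|t(a)|\le 1/2$, $|a|\le 1+M/4$, and $|\p_{\phi,\psi}(t(a))|\le|P_G(\phi)|+M/2$, all depending on $\phi,\theta$ alone. The step I expect to require the most care is verifying that all constants come out independent of $\psi$; this works precisely because the constants $\eps_\theta(\phi),M_\theta(\phi)$ of Theorem~\ref{t.TDF-for-SPR} are uniform over $\|\psi\|_\beta\le 1$, which is the main technical improvement of that theorem over the classical per-$\psi$ version.
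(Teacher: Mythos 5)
Your proposal is correct and follows essentially the same route as the paper: establish the pointwise Legendre identity $\q_{\phi,\psi}(a)=\p_{\phi,\psi}(t(a))-t(a)\,a$ on a $\psi$-uniform neighborhood of $a_0$ by solving $\p'_{\phi,\psi}(t)=a$ via the second- and third-derivative bounds of Theorem~\ref{t.TDF-for-SPR}, then read off (2)--(4) from $\q'=-t$, $\q''=-1/\p''$, $\q'''=\p'''/(\p'')^3$. The paper's proof organizes the quantitative solvability slightly differently (an explicit Taylor estimate with the interval $|t|\le\eps\sigma^2/M^2$ and $\delta_\theta(\phi)=\eps/(2M^2)$ rather than your $\eta=\min\{\eps,\sigma^2/(2M)\}$ and an enlarged $M$), but the substance is identical.
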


\begin{proof}
Fix $0<\beta\leq |\log\theta|$ and let $\eps:=\eps_\theta(\phi)$, $M:=M_\theta(\phi)$ as in  Theorems~\ref{t.TDF-for-SPR} and \ref{t.sigma}. Without loss of generality, $0<\eps<0.01$ and $M>100$.
Next, fix $\psi\in\mathcal H_\beta$ with norm $\|\psi\|_\beta\leq 1$ and such that
$\sigma_{m}(\psi)\neq 0$.
Let
$$
\sigma:=\sigma_{m}(\psi),\  a_0:=\int\psi dm,\   p(t):={\mathfrak p}_{\phi,\psi}(t),\
 q(a):=\mathfrak{q}_{\phi,\psi}(a).
$$

\medskip
\noindent
{\sc Claim.} {\em If $|a-a_0|<\frac{\eps \sigma^4}{2M^2}$, then  $\exists! t\in\R$ such that $p'(t)=a$. In addition, $\phi+t\psi$ has a unique equilibrium measure $m_{t}$, $q(a)=P_{m_t}(\phi)$, and

\begin{equation}\label{t-estimate}
|t|\leq \frac{\eps\sigma^2}{M^2}\ , \ (1-\eps)\sigma^2\leq p''(t)\leq (1+\eps)\sigma^2.
\end{equation}
}

\noindent
{\em Proof of the claim.\/}
By Theorems~\ref{t.TDF-for-SPR} and \ref{t.sigma}, $p'$ is real-analytic on $(-\eps,\eps)$,  $p'(0)=a_0$, $p''(0)=\sigma^2$, and $|p'''|\leq M$ on $(-\eps,\eps)$, and $\frac{\eps\sigma^2}{M^2}\leq \eps$.
By Taylor's approximation for $p'$, with the Lagrange form of the remainder,
\begin{align*}
p'(\tfrac{\eps\sigma^2}{M^2})&\geq p'(0)+p''(0)\tfrac{\eps\sigma^2}{M^2}-\tfrac{1}{2!}M \left(\tfrac{\eps\sigma^2}{M^2}\right)^2>a_0+\frac{\eps \sigma^4}{2M^2}\geq a.
\end{align*}
Similarly, $
p'(-\tfrac{\eps\sigma^2}{M^2})\leq p'(0)-p''(0)\tfrac{\eps\sigma^2}{M^2}+\frac{1}{2!}M \left(\tfrac{\eps\sigma^2}{M^2}\right)^2<a_0-\frac{\eps \sigma^4}{2M^2}\leq a.$
By the intermediate value theorem, $\exists t\in (-\tfrac{\eps\sigma^2}{M^2},\tfrac{\eps\sigma^2}{M^2})$ such that $p'(t)=a$.

To see that this $t$ is unique, we recall that $p$ is convex, therefore $p'$ is monotonically increasing in the broad sense, therefore by the previous inequalities, all solutions to $p'(t)=a$ must belong to $(-\tfrac{\eps\sigma^2}{M^2},\tfrac{\eps\sigma^2}{M^2})$. Inside  this interval, there can be at most one solution, because if there were $t_1\neq t_2$ such that $p'(t_i)=a$, then  $p''$ would have vanished somewhere in $(-\tfrac{\eps\sigma^2}{M^2},\tfrac{\eps\sigma^2}{M^2})$, whereas
$$
|p''(t)-\sigma^2|=|p''(t)-p''(0)|\leq M|t|\leq \eps\sigma^2\ \ \ (\because |p'''|\leq M, |t|\leq \tfrac{\eps\sigma^2}{M^2}),
$$
so $p''(t)>0$ on this  interval. Indeed, $(1-\eps)\sigma^2\leq p''(t)\leq (1+\eps)\sigma^2$ there.

Let $t$ be the unique solution to $p'(t)=a$.
We saw that $|t|\leq \frac{\eps\sigma^2}{M^2}$.
By Theorem~\ref{t.sigma}, $\sigma\leq M$,
so $|t|\leq \eps$. By Theorem~\ref{t.TDF-for-SPR} and by the choice of $\eps$, $\phi+t\psi$ has a unique equilibrium measure $m_t$.
Also, by the choice of $t$,
$
a=p'(t)=\int\psi dm_t
$.
 Thus
$$P_{m_t}(\phi)=P_{m_t}(\phi+t\psi)-\int t \psi dm_t=p(t)-\int t \psi dm_t=p(t)-tp'(t)=p(t)-ta.$$
For all other $\mu\in\mathfs M(\Sigma^+)$ such that $\int\psi d\mu=a$, we have
$$
P_{\mu}(\phi)=P_\mu(\phi+t\psi)-\int t \psi d\mu\leq p(t)-\int t \psi d\mu=p(t)-ta=P_{m_t}(\phi).
$$
So $P_{m_t}(\phi)=\sup\{P_\mu(\phi):\mu\in\mathfs M(\Sigma^+), \int \psi d\mu=a\}=q(a)$,  proving the claim.

\medskip
Let $\delta:=\delta_\theta(\phi):=\frac{\eps}{2M^2}$, and  $I_\psi:=(a_0-\delta\sigma^4, a_0+\delta \sigma^4)$. The claim shows that every $a\in I_\psi$ equals $p'(t)$ for some $|t|\leq \frac{\eps\sigma^2}{2M^2}$  and $q(a)=P_{m_t}(\phi)=p(t)-ta$ is uniformly bounded. It follows that $I_\psi\subset (p'(-\infty),p'(+\infty))$, the domain of $q$, and  $|q|$ is uniformly bounded on $I_\psi$. This proves part (1) of the theorem.

\medskip
When we proved the claim, we mentioned in passing the following fact:
\begin{equation}\label{Legendre-Transform}
q(a)=p(t)-tp'(t)\text{ for the unique $|t|\leq \eps$ such that $p'(t)=a$.}
\end{equation}
\eqref{Legendre-Transform} says that  $q:I_\psi\to\R$  is minus the Legendre transform of the restriction of $p$ to  $(p')^{-1}(I_\psi)$.
Indeed, we have the identity  $q=(p-id\cdot p')\circ(p')^{-1}$.

By Theorem \ref{t.TDF-for-SPR},  $p'$ is $C^\infty$ on $(-\eps,\eps)$, and by Theorem \ref{t.sigma} $p''\neq 0$ (because $p''(t)=0$ $\Rightarrow$ $\sigma_{m_t}(\psi)=0$ $\Rightarrow$ $\psi$ is cohomologous to a constant $\Rightarrow$ $\sigma_m(\psi)=0$ in contradiction to our assumptions).
It follows that  $(p')^{-1}:I_\psi\to (-\eps,\eps)$ is $C^\infty$,  whence $q\equiv (p-id\cdot p')\circ (p')^{-1}$ is $C^\infty$ on $I_\psi$. Since we have already seen that $q$ is uniformly bounded on $I_\psi$, this proves part (2).

By \eqref{Legendre-Transform}, $q(p'(t))=p(t)-tp'(t)$. Repeated differentiation with respect to $t$ gives
\begin{equation}\label{eq.Legendre-Identities}
q'(p'(t))=-t,\ q''(p'(t))=-1/p''(t),\ q'''(p'(t))=p'''(t)/p''(t)^3.
\end{equation}
So
$
q''(a)=q''(p'(t))=-\frac{1}{p''(t)}\in (-2\sigma^{-2}, -\tfrac{1}{2}\sigma^{-2})
$ by \eqref{t-estimate},
 and $|q'''(a)|=\left|\frac{p'''(t)}{p''(t)^3}\right|\leq 8M\sigma^{-6}$.   Part (3) follows  with
 $
 H_\beta(\phi):=8M.
 $
  Part (4) also follows from \eqref{eq.Legendre-Identities}, because
   $p(0)=P_G(\phi)$, $p'(0)=a_0$ and $p''(0)=\sigma^2$.
\end{proof}

\begin{corollary}\label{c.h-double-ineq}
Under the assumptions of Lemma~\ref{lemma-h},
there are $\delta_\theta(\phi),H_\theta(\phi)>0$ such that if  $0<\delta\leq \delta_\theta(\phi)$,
then the following holds  for every $\psi\in\mathcal H_\beta$ such that
$\|\psi\|_\beta\leq 1$ and $\sigma_{m}(\psi)\neq 0$. Let $a_0:=\int\psi dm$, $\sigma:=\sigma_m(\psi)$.
\begin{enumerate}[(1)]
\item  If  $|a-a_0|\leq\frac{\delta}{H_\theta(\phi)}\sigma^4$, then
$$
e^{-\delta}\frac{1}{2\sigma^2}\left(a-a_0\right)^2
\leq \mathfrak{q}_{\phi,\psi}(a_0)-\mathfrak{q}_{\phi,\psi}(a)\leq
e^{\delta}\frac{1}{2\sigma^2}\left(a-a_0\right)^2.
$$
\item If $|a-a_0|>\frac{\delta_\theta(\phi)}{H_\theta(\phi)}\sigma^4$, then
$
\mathfrak{q}_{\phi,\psi}(a_0)-\mathfrak{q}_{\phi,\psi}(a)\geq \frac{\delta_\theta(\phi)\sigma^2}{8H_\theta(\phi)}|a-a_0| .
$
\end{enumerate}
\end{corollary}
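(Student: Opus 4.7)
The plan is to derive both parts of the corollary directly from the local Taylor data at $a_0$ given by Lemma~\ref{lemma-h}, combined with the global concavity of $\q_{\phi,\psi}$ from Lemma~\ref{l.h-is-concave}. I will take $\delta_\theta(\phi),H_\theta(\phi)$ to be the constants from Lemma~\ref{lemma-h}, shrinking $\delta_\theta(\phi)$ and enlarging $H_\theta(\phi)$ by absolute factors where needed. Throughout, abbreviate $\sigma:=\sigma_m(\psi)$.

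For part (1), I would Taylor expand $\q_{\phi,\psi}$ around $a_0$ with Lagrange remainder. By Lemma~\ref{lemma-h}(4) the linear term vanishes and the quadratic coefficient is $-1/(2\sigma^2)$, so
\[
\q_{\phi,\psi}(a_0)-\q_{\phi,\psi}(a)=\frac{(a-a_0)^2}{2\sigma^2}\,(1+\eta),
\]
where $\eta$ encodes the cubic remainder. The third-derivative bound $|\q_{\phi,\psi}'''|\le H_\theta(\phi)\sigma^{-6}$ from Lemma~\ref{lemma-h}(3), together with the hypothesis $|a-a_0|\le \delta\sigma^4/H_\theta(\phi)$, forces $|\eta|\le \delta/3$. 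After shrinking $\delta_\theta(\phi)\le 1$, this gives $|\log(1+\eta)|\le \delta$ for all $\delta\in(0,\delta_\theta(\phi)]$, which is exactly the double inequality with the factors $e^{\pm\delta}$.

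For part (2), I would argue by concavity. Suppose $a>a_0+\delta_\theta(\phi)\sigma^4/H_\theta(\phi)$ (the other side is symmetric) and set $b:=a_0+\delta_\theta(\phi)\sigma^4/H_\theta(\phi)$, which lies in $I_\psi$. Applying part (1) at $b$ with $\delta=\delta_\theta(\phi)$ gives
\[
\q_{\phi,\psi}(a_0)-\q_{\phi,\psi}(b)\ge \frac{e^{-\delta_\theta(\phi)}\,\delta_\theta(\phi)^2\,\sigma^6}{2\,H_\theta(\phi)^2}.
\]
The nonnegative function $a\mapsto \q_{\phi,\psi}(a_0)-\q_{\phi,\psi}(a)$ is convex on the whole domain of $\q_{\phi,\psi}$ (Lemma~\ref{l.h-is-concave}) and vanishes at $a_0$ together with its derivative (Lemma~\ref{lemma-h}(4)); hence its secant slopes through $a_0$ are nondecreasing on $(a_0,\infty)$, yielding
\[
\q_{\phi,\psi}(a_0)-\q_{\phi,\psi}(a)\ge \frac{\q_{\phi,\psi}(a_0)-\q_{\phi,\psi}(b)}{b-a_0}\,(a-a_0).
\]
Inserting the lower bound on the numerator and choosing, say, $\delta_\theta(\phi)\le\log 4$ so that $e^{-\delta_\theta(\phi)}\ge 1/4$ produces the claimed linear lower bound with constant $\delta_\theta(\phi)\sigma^2/(8H_\theta(\phi))$.

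The only substantive subtlety is the concavity step in part (2): $a$ may lie outside $I_\psi$, where Lemma~\ref{lemma-h}'s smoothness fails and $\q_{\phi,\psi}$ could in principle behave wildly due to phase transitions. The saving grace is that $\q_{\phi,\psi}$ is concave on its entire domain by Lemma~\ref{l.h-is-concave} (which uses only the affineness of metric entropy), so the linear bound propagates automatically from the single boundary evaluation at $b$ to all more distant $a$ in the domain, with no further analytic input required.
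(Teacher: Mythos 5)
Your part (1) follows the paper's argument essentially verbatim: Taylor expansion with Lagrange remainder around $a_0$, using Lemma~\ref{lemma-h}(4) for the constant, linear, and quadratic terms and Lemma~\ref{lemma-h}(3) for the cubic remainder bound $|\mathfrak{q}'''|\le H_\theta(\phi)\sigma^{-6}$, then absorbing the relative error $\le\delta/3$ into the factors $e^{\pm\delta}$.

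Your part (2) is correct and takes a slightly different (but equally valid) route from the paper's. The paper picks the \emph{midpoint} $a_1:=a_0+\delta\sigma^4/(2H)$, applies the mean-value theorem to $\mathfrak{q}'$ on $[a_0,a_1]$ using the bound $\mathfrak{q}''\le -\tfrac{1}{2}\sigma^{-2}$ from Lemma~\ref{lemma-h}(3) to get $\mathfrak{q}'(a_1)\le -\delta\sigma^2/(4H)$, then uses the concavity estimate $\frac{\mathfrak{q}(a)-\mathfrak{q}(a_1)}{a-a_1}\le (D^+\mathfrak{q})(a_1)$ to propagate the linear decay to all larger $a$, and finally trades $a-a_1\ge\tfrac12(a-a_0)$ and $\mathfrak{q}(a_1)\le\mathfrak{q}(a_0)$ for the clean constant. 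You instead evaluate part (1) at the \emph{endpoint} $b:=a_0+\delta_\theta(\phi)\sigma^4/H$ of the window and use the monotonicity of secant slopes through $a_0$ for the convex function $a\mapsto\mathfrak{q}(a_0)-\mathfrak{q}(a)$. Both arguments rest on the same two pillars—the quantitative concavity of $\mathfrak{q}$ on $I_\psi$ from Lemma~\ref{lemma-h}, and the \emph{global} concavity of $\mathfrak{q}$ from Lemma~\ref{l.h-is-concave} to escape $I_\psi$—and you correctly flag this last point as the real content of part (2). Your version reuses part (1) directly instead of re-invoking the second-derivative bound, which is marginally cleaner; the constants come out the same up to the usual freedom to shrink $\delta_\theta(\phi)$, and your constraint $\delta_\theta(\phi)\le\log 4$ is weaker than the paper's $\delta_\theta(\phi)<\tfrac13$, so there is no conflict.
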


\begin{proof}
Let $\sigma:=\sigma_{m}(\psi), a_0:=\int\psi dm$,
$q:={\q}_{\phi,\psi}$, $p:={\mathfrak p}_{\phi,\psi}$.
Let $H:=H_\theta(\phi)$ and  $\delta_\theta(\phi)$ be as in the previous lemma.
Without loss of generality $\delta_\theta(\phi)<\frac{1}{3}$ and $H>1$ (actually, the proof of Lemma~\ref{lemma-h} gives $\delta_\theta(\phi)=\frac{\eps}{2M^2}<10^{-6}$ and $H=8M>800$). Let's write $A=B\pm C$ if $A\in[B-|C|,B+|C|]$.

  Suppose $|a-a_0|\leq \frac{\delta\sigma^4}{H}$ where $0<\delta\leq \delta_\theta(\phi)$. Then $a\in I_\psi$ and we can use the properties listed in Lemma~\ref{lemma-h}.
Taylor's expansion gives
\begin{align*}
&q(a)=q(a_0)+q'(a_0)(a-a_0)+\frac{1}{2}q''(a_0)(a-a_0)^2+\frac{1}{6}q'''(\eta)(a-a_0)^3
\end{align*}
 for some $\eta$ such that $|\eta-a_0|\leq \frac{\delta\sigma^4}{H}$.
 By Lemma~\ref{lemma-h},
\begin{align*}
&q(a)=q(a_0)-\frac{1}{2\sigma^2}(a-a_0)^2\pm\frac{1}{6}\frac{H}{\sigma^6}|a-a_0|^3\\
&=q(a_0)-\frac{1}{2\sigma^2}(a-a_0)^2\biggl(1\pm\frac{1}{3}\frac{H}{\sigma^4}|a-a_0|\biggr)
=q(a_0)-\frac{1}{2\sigma^2}(a-a_0)^2(1\pm \tfrac{\delta}{3})\\
&=q(a_0)-e^{\pm\delta}\frac{1}{2\sigma^2}(a-a_0)^2,
\end{align*}
where the last bound is because $0<\delta<\delta_\beta(\phi)<\tfrac{1}{3}$,
so  $(1-\tfrac{\delta}{3}, 1+\tfrac{\delta}{3})\subset (e^{-\delta},e^{\delta})$.
Rearranging terms, we obtain  the first part of the corollary.

\medskip
The second part of the corollary is more delicate,  because it deals with parts of the domain where we do not know that $q(\cdot)$ is differentiable.

Suppose $a-a_0 > \frac{\delta\sigma^4}{H}$ with $\delta=\delta_\theta(\phi)$, and  let $a_1:=a_0+\frac{\delta\sigma^4}{2H}$,
then $a-a_1\geq \frac{1}{2}(a-a_0)$.
 Since $\delta:=\delta_\theta(\phi)$, $q(\cdot)$ is $C^\infty$ on a neighborhood of $[a_0,a_1]$, and
 $
q''\leq -\frac{1}{2\sigma^2}\text{ on }[a_0,a_1].
 $
So by the mean value theorem for $q'$,
\begin{align*}
&q'(a_1)=q'(a_0)+q''(\xi)(a_1-a_0)\text{ for some }\xi\in [a_0,a_1]\\
&\leq -\frac{1}{2\sigma^2}(a_1-a_0)=-\frac{\delta\sigma^2}{4H}.
\end{align*}
Although we cannot assume that $q$ is differentiable on $[a_0,a]$, we do know that it is concave there. This is sufficient to deduce that
$\frac{q(a)-q(a_1)}{a-a_1}\leq (D^+ q)(a_1)=q'(a_1)\leq -\frac{\delta\sigma^2}{4H}$.
Rearranging terms, and recalling that $(a-a_1)\geq \frac{1}{2}(a-a_0)$, we find that
\begin{align*}
q(a)&\leq q(a_1)-\frac{\delta\sigma^2}{4H}(a-a_1)\leq q(a_0)-\frac{\delta\sigma^2}{8H}(a-a_0),
\end{align*}
where the last inequality uses the inequality $q(a_1)\leq q(a_0)$, a consequence of part (1). Rearranging terms, we obtain part (2) in the case when $a>a_0+\frac{\delta\sigma^4}{H}$.
The case $a<a_0-\frac{\delta\sigma^4}{H}$ is obtained from the symmetry $\psi\leftrightarrow-\psi$.
\end{proof}

\section{The EKP Inequality for measures with large pressure}\label{sec:large_pressure}
Suppose $\Sigma^+$ is a {topologically transitive} countable Markov shift with finite Gurevich entropy.
Let $\phi$ be a $\theta$-weakly H\"older continuous SPR potential such that $\sup\phi<\infty$.
Let $m$ be the unique equilibrium measure of $\phi$. Suppose $e^{-\beta}\leq \theta$, and recall that $(\mathcal H_\beta,\|\cdot\|_\beta)$ denotes the space of $\beta$-H\"older continuous functions of $\Sigma^+$, see \eqref{eq:H-beta}.

\begin{theorem}\label{t.optimal-EKP}
There exist $\eps_\theta^\ast(\phi),C_\theta^\ast(\phi)>0$ such that for every $0\not\equiv\psi\in\mathcal H_\beta$, $0<\eps<\eps_\theta^\ast(\phi)$,
and  $\mu\in\mathfs M(\Sigma^+)$, if
$
P_\mu(\phi)\geq P_G(\phi)-C_\theta^\ast(\phi)\eps^2\frac{\sigma_{m}^6(\psi)}{\|\psi\|_\beta^6}
$
then
$$
\left|\int\psi d\mu-\int\psi dm\right|\leq \sqrt{2}e^{\eps}\sigma_m(\psi)\sqrt{P_G(\phi)-P_\mu(\phi)}.
$$
The bound is sharp in the following sense:
For any $\psi\in\mathcal H_\beta$ such that $\sigma_{m}(\psi)>0$ there exists a sequence of ergodic measures $\nu_n\in\mathfs M(\Sigma^+)$ such that $P_{\nu_n}(\phi)\to P_G(\phi)$, and
\[
\displaystyle\frac{\left|\int\psi d\nu_n-\int\psi dm\right|}{\sqrt{P_G(\phi)-P_{\nu_n}(\phi)}}\xrightarrow[n\to\infty]{}\sqrt{2}\sigma_{m}(\psi).
\]
\end{theorem}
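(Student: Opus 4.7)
The plan is to combine the trivial inequality $P_\mu(\phi) \leq \q_{\phi,\psi}\bigl(\int\psi\,d\mu\bigr)$, immediate from the definition of the restricted pressure function, with the quantitative concavity estimates of Corollary~\ref{c.h-double-ineq}. Some reductions first: via the spectral decomposition of \S\ref{s.spectral-decomposition} I may assume $\Sigma^+$ is topologically mixing, since the passage from the transitive system to $(\Sigma_0^+,\sigma^p,\phi_p|_{\Sigma_0^+})$ multiplies $P_\mu(\phi)$, $\int\psi\,d\mu$, and $\sigma_m^2(\psi)$ all by $p$, leaving the desired inequality scale-invariant. I may also assume $\sigma_m(\psi) > 0$, since otherwise Theorem~\ref{t.sigma}(1) forces $\psi$ to be a continuous coboundary plus a constant, so that $\int\psi\,d\mu - \int\psi\,dm = 0$ for every $\sigma$-invariant $\mu$. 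Finally, replacing $\psi$ by $\psi/\|\psi\|_\beta$ I may normalize $\|\psi\|_\beta = 1$; the general form of the bound follows by rescaling.

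Writing $a_0 := \int\psi\,dm$, $a := \int\psi\,d\mu$ and $\sigma := \sigma_m(\psi)$, the fact that $m$ is the equilibrium measure of $\phi$ gives $\q_{\phi,\psi}(a_0) = P_G(\phi)$, so
$$
P_G(\phi) - P_\mu(\phi) \;\geq\; \q_{\phi,\psi}(a_0) - \q_{\phi,\psi}(a).
$$
I would then choose $\eps_\theta^\ast(\phi) \leq \delta_\theta(\phi)/2$ and $C_\theta^\ast(\phi) > 0$ small (in terms of $\delta_\theta(\phi)$ and $H_\theta(\phi)$) so that the hypothesis $P_G(\phi)-P_\mu(\phi) \leq C_\theta^\ast(\phi)\eps^2\sigma^6$ forces $|a-a_0| \leq 2\eps\sigma^4/H_\theta(\phi)$. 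Indeed, if $|a-a_0|$ exceeded $\delta_\theta(\phi)\sigma^4/H_\theta(\phi)$, Corollary~\ref{c.h-double-ineq}(2) would yield $P_G(\phi)-P_\mu(\phi) \geq \delta_\theta(\phi)^2\sigma^6/(8H_\theta(\phi)^2)$; and if $|a-a_0|$ lay in the transitional range $\bigl(2\eps\sigma^4/H_\theta(\phi),\, \delta_\theta(\phi)\sigma^4/H_\theta(\phi)\bigr]$, Corollary~\ref{c.h-double-ineq}(1) applied with $\delta = \delta_\theta(\phi)$ would yield $P_G(\phi)-P_\mu(\phi) \geq 2e^{-\delta_\theta(\phi)}\eps^2\sigma^6/H_\theta(\phi)^2$. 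For $\eps_\theta^\ast(\phi)$ and $C_\theta^\ast(\phi)$ sufficiently small both outcomes contradict the hypothesis. In the remaining Taylor region, Corollary~\ref{c.h-double-ineq}(1) applied with $\delta := 2\eps$ gives
$$
P_G(\phi) - P_\mu(\phi) \;\geq\; e^{-2\eps}\frac{(a-a_0)^2}{2\sigma^2},
$$
and rearranging yields $|a-a_0| \leq \sqrt{2}\,e^\eps\sigma\sqrt{P_G(\phi)-P_\mu(\phi)}$, as desired.

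For sharpness, I would take $\nu_n := \mu_{\phi+t_n\psi}$ for a sequence $t_n \to 0$ of nonzero reals with $|t_n| < \eps_\theta(\phi)$; these are the unique equilibrium measures given by Theorem~\ref{t.TDF-for-SPR}(2), and are ergodic by uniqueness. Setting $p(t) := \p_{\phi,\psi}(t)$, Theorem~\ref{t.TDF-for-SPR}(3)--(4) together with Taylor's theorem at $t=0$ give
$$
\int\psi\,d\nu_n - \int\psi\,dm \;=\; p'(t_n) - p'(0) \;=\; \sigma^2 t_n + O(t_n^2),
$$
while the identity $P_{\nu_n}(\phi) = P_{\nu_n}(\phi+t_n\psi) - t_n\int\psi\,d\nu_n = p(t_n) - t_n p'(t_n)$ yields
$$
P_G(\phi) - P_{\nu_n}(\phi) \;=\; \tfrac{1}{2}\sigma^2 t_n^2 + O(t_n^3).
$$
The ratio in the statement therefore tends to $\sqrt{2}\,\sigma_m(\psi)$ as $n \to \infty$.

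The main technical obstacle is the simultaneous choice of $\eps_\theta^\ast(\phi)$ and $C_\theta^\ast(\phi)$ so that all three subregimes of Corollary~\ref{c.h-double-ineq} interact correctly; everything else---including the spectral decomposition bookkeeping and the Taylor computation---is essentially routine.
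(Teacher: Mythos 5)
Your proof is correct and follows essentially the same route as the paper's: reduce to the mixing case, normalize $\|\psi\|_\beta=1$, combine the trivial bound $P_\mu(\phi)\le\mathfrak q_{\phi,\psi}\bigl(\int\psi\,d\mu\bigr)$ with the case analysis supplied by Corollary~\ref{c.h-double-ineq}, and realize sharpness with the equilibrium measures $\mu_{\phi+t_n\psi}$ (the paper parametrizes by $a_n\to a_0$ and reapplies the corollary rather than Taylor-expanding $\mathfrak p_{\phi,\psi}$, but these are the same computation). The one small deviation is in the $\sigma_m(\psi)=0$ case: the paper notes that the hypothesis then forces $P_\mu(\phi)=P_G(\phi)$, hence $\mu=m$ by uniqueness of the equilibrium measure, which sidesteps the point your argument implicitly relies on --- that a bounded coboundary of a continuous but possibly unbounded transfer function integrates to zero against every invariant probability (true, via Poincar\'e recurrence, but the uniqueness route is cleaner).
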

\noindent
\medskip
{\em Remark.\/} Recall from \S\ref{s.setup} that in the special case $\phi\equiv 0$, $m$ is the measure of maximal entropy,  $P_G(\phi)$ is the entropy of $m$, and the condition that $\phi$ is SPR is the same as the condition that $\Sigma^+$ is SPR.

\begin{proof}
Suppose first that $\sigma:\Sigma^+\to\Sigma^+$ is topologically {\em mixing}.

If $\sigma_{m}(\psi)=0$, then $P_\mu(\phi)\geq P_G(\phi)-C_\theta^\ast(\phi)\eps^2\frac{\sigma_{m}^6(\psi)}{\|\psi\|_\beta^6}$ implies that $P_\mu(\phi)=P_G(\phi)$, whence by the uniqueness of the equilibrium measure $\mu=m$ and the inequality is trivial. So it is enough to consider the case $\sigma_m(\psi)\neq 0$.
It is easy to verify that $\sigma_{m}(t\psi)=|t|\sigma_{m}(\psi)$.
This allows us to work with normalized functions $\psi/\|\psi\|_\beta$.
Henceforth we assume that $\|\psi\|_\beta=1$ and $\sigma:=\sigma_{m}(\psi)\neq 0$.
Let $a_0:=\int \psi dm$. Let $H_\theta(\phi)$ and $\delta_\theta(\phi)$ denote the constants from Corollary~\ref{c.h-double-ineq}.
Let
$
H:=H_\theta(\phi)$, $C^\ast:=\frac{1}{9H^2}.
$
Fix some $0<\delta<\delta_\theta(\phi)$, and
suppose $\mu\in\mathfs M(\Sigma^+)$ satisfies  $P_\mu(\phi)\geq P_G(\phi)-C^\ast\delta^2\sigma^6$.
Let $a:=\int\psi d\mu$.
By the definition of the restricted pressure,
$$\mathfrak{q}_{\phi,\psi}(a)\geq P_\mu(\phi).$$
We claim that $|a-a_0|\leq \frac{\delta_\theta(\phi)\sigma^4}{H}$. Otherwise, by the assumption on $\mu$,
\begin{align*}
&C^\ast\delta^2_\theta(\phi)\sigma^6 \geq P_G(\phi)-P_\mu(\phi)\geq P_G(\phi)-\mathfrak{q}_{\phi,\psi}(a),\text{ because }\mathfrak{q}_{\phi,\psi}(a)\geq P_\mu(\phi)\\
&=\mathfrak{q}_{\phi,\psi}(a_0)-\mathfrak{q}_{\phi,\psi}(a),\text{ because $\mathfrak{q}_{\phi,\psi}(a_0)=P_G(\phi)$ by Lemma \ref{lemma-h}} \\
&\geq \frac{\delta_\theta(\phi)\sigma^2}{8H}|a-a_0|\text{ by the 2nd part of  Corollary~\ref{c.h-double-ineq}}\\
&\geq \frac{\delta_\theta(\phi)\sigma^2}{8H}\frac{\delta_\theta(\phi)\sigma^4}{H}=\frac{\delta^2_\theta(\phi)\sigma^6}{8 H^2}\text{, by the assumption $|a-a_0|\geq \frac{\delta_\theta(\phi)\sigma^4}{H}$.}
\end{align*}
But this  contradicts the definition of $C^\ast$.

So  $|a-a_0|\leq \frac{\delta_\theta(\phi)\sigma^4}{H}$, and the first part of Corollary~\ref{c.h-double-ineq} gives us
$$
(a-a_0)^2\leq 2\sigma^2 e^\delta (\mathfrak{q}_{\phi,\psi}(a_0)-\mathfrak{q}_{\phi,\psi}(a)).
$$
Taking the square root, and recalling that $a=\int\psi d\mu$, $a_0=\int \psi dm$,
$\q_{\phi,\psi}(a_0)=P_G(\phi)$, and $\q_{\phi,\psi}(a)\geq P_\mu(\phi)$, we obtain
\begin{align*}
&\left|\int\psi d\mu-\int\psi dm\right|\leq e^{\delta}\sqrt{2}\sigma\sqrt{P_G(\phi)-\mathfrak{q}_{\phi,\psi}(a)}\leq
e^{\delta}\sqrt{2}\sigma\sqrt{P_G(\phi)-P_\mu(\phi)}.
\end{align*}
This proves
 the first part of the theorem with $\eps^\ast_\theta(\phi):=\delta_\theta(\phi)$.

To see the second part, take $a_n\to a_0$. When we proved Lemma~\ref{lemma-h}, we saw that if $a_n$ is sufficiently close to $a_0$, then $\exists! t_n$ such that $\p_{\phi,\psi}'(t_n)=a_n$, the equilibrium measure $\nu_n:=m_{t_n}$ of $\phi+t_n\psi$ exists, and $\mathfrak{q}_{\phi,\psi}(a_n)=P_{\nu_n}(\phi)$.

 Repeating  the previous argument with $\nu_n$ replacing $\mu$, but now with the full force of Corollary~\ref{c.h-double-ineq}(1), we find that
$
\sqrt{2}\sigma e^{-\delta_n}\leq \frac{\left|\int\psi d\nu_n-\int\psi dm\right|}{\sqrt{P_G(\phi)-P_{\nu_n}(\phi)}}\leq \sqrt{2}\sigma e^{\delta_n},
$
where $\delta_n:=\frac{H}{\sigma^4}|a_n-a_0|\to 0$.

\medskip
This completes the proof in the topologically mixing case. We will now outline the proof in the non-mixing topologically transitive case.
Suppose $\sigma:\Sigma^+\to\Sigma^+$ is  topologically transitive, with period $p$, and let
$$
\Sigma^+=\Sigma_0^+\uplus\cdots\uplus\Sigma_{p-1}^+
$$
be the spectral decomposition from  \S\ref{s.spectral-decomposition}. The assumption that $\phi$ is SPR on $\Sigma^+$ means, by definition, that $\phi_p:=\sum_{k=0}^{p-1}\phi\circ\sigma^k$ is SPR with respect to the topologically mixing $\sigma^p:\Sigma_i^+\to\Sigma_i^+$ for some (and then for all) $i=0,\ldots,p-1$.

$\Sigma_i^+=\sigma^{i-j}(\Sigma_j^+)$, therefore for every  $\sigma$-invariant measure $\mu$, $\mu(\Sigma_i^+)$ are all equal (to $1/p$), and
$
\mu=\frac{1}{p}\sum_{i=0}^{p-1}\mu_i,\text{ where }\mu_i:=\mu(\cdot | \Sigma_i^+)
$ (the conditional measure on $\Sigma_i^+$).
In addition:
\begin{enumerate}[(1)]
\item $h_{\mu_i}(\sigma^p)=p h_\mu(\sigma)$: Firstly, $\sigma^{j+p-i}:(\Sigma_i^+,\mu_i)\to (\Sigma_j^+,\mu_j)$ is a factor map for all $i,j$ so $h_{\mu_i}(\sigma^p)$ are all equal. Secondly, by the affinity of the entropy map, $\frac{1}{p}\sum_{i=0}^{p-1} h_{\mu_i}(\sigma^p)=h_\mu(\sigma^p)=ph_\mu(\sigma)$.
\item  $\int\phi_p d\mu_i=p\int\phi d\mu$. {(More generally, for any integer multiple of $p$.)}
\item By (1) and (2), $P_{\mu_i}(\phi_p)=pP_\mu(\phi)$.
\item By definition, $P_G(\phi_p|_{\Sigma_i^+},\sigma^p)=pP_G(\phi|_{\Sigma^+},\sigma)$.
\item If $m$ is an equilibrium measure of $\phi$, then $m_i$ is an equilibrium measure of $\phi_p$.
\item By definition, $\sigma_{m_i}^2(\phi_p) = \lim_{n\to\infty} \frac1n \Var_{m_i}(\sum_{j=0}^{n-1}\psi_p\circ\sigma^{jp})$.
\item $\sigma_{m_0}(\psi_p)=\sqrt{p}\sigma_m(\psi)$: By (2), $\E_{m_i}(\psi_{np})=np\E_m(\psi)=\E_m(\psi_{np})$, therefore
\begin{align*}
&\sigma^2_m(\psi)=\lim_{n\to\infty}\frac{1}{np}\Var_m(\psi_{np})
=\lim_{n\to\infty}\frac{1}{np^2}
\sum_{i=0}^{p-1}\Var_{m_i}(\psi_{np})\\
&=\lim_{n\to\infty}\frac{1}{np^2}
\sum_{i=0}^{p-1}\Var_{m_0}\bigl(\sum_{j=0}^{n-1}(\psi_{p})\circ\sigma^{jp}\circ\sigma^i\bigr)=
\frac{1}{p^2}\sum_{i=0}^{p-1}\sigma_{m_0}^2(\psi_p\circ\sigma^i)=\frac{1}{p}\sigma_{m_0}^2(\psi_p),
\end{align*}
where the last equality is because $\psi_p\circ \sigma^i$ is $\sigma^p$-cohomologous to $\psi_p$.
\end{enumerate}
It is now an easy exercise to deduce the theorem for $\sigma:\Sigma^+\to\Sigma^+,\psi,m,\mu$ from the theorem for the topologically mixing $\sigma^p:\Sigma_0^+\to
\Sigma_0^+$, with  $\psi_p, m_0,\mu_0$.
\end{proof}

\medskip
The previous result gives the ``optimal" form of the EKP inequality for measures with high entropy. Note that the bound   $\const\cdot\|\psi\|_\beta\sqrt{h-h_{\mu}(\sigma)}$  in the original EKP inequality is replaced by
\[
e^{\eps}\sqrt{2}\sigma_{\mu_0}(\psi)\sqrt{h-h_\mu(\sigma)},\]
where $\sigma_{\mu_0}^2(\psi)$ is the asymptotic variance of $\psi$ with respect to the measure of maximal entropy $\mu_0$.
This is better than \eqref{EKP-ineq}, because $\sigma_{\mu_0}(\psi)\leq M\|\psi\|_\beta$ (Theorem~\ref{t.sigma}), and because of the following fact.

\begin{lemma}\label{lem:small_sigma}
 Suppose $\Sigma^+$ is topologically transitive, with positive Gurevich entropy. For every $\beta>0$, there is a sequence of $\psi^{(n)}\in\mathcal H_\beta$ such that $\sigma_{\mu_0}(\psi^{(n)})\neq 0$, $\int\psi^{(n)} d\mu_0=0$, and
$\displaystyle
\frac{\sigma_{\mu_0}(\psi^{(n)})}{\|\psi^{(n)}\|_\beta}\xrightarrow[n\to\infty]{}0.
$
\end{lemma}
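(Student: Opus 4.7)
The strategy is to set $\psi^{(n)} := \varphi + (u_n - u_n \circ \sigma)$, where $\varphi \in \mathcal{H}_\beta$ is a fixed function with $\int\varphi\,d\mu_0 = 0$ and $\sigma_{\mu_0}(\varphi) > 0$, and $u_n \in \mathcal{H}_\beta$ is a sequence whose coboundary has diverging Hölder norm. By Theorem~\ref{t.sigma}(2), adding a continuous coboundary does not change the asymptotic variance, so $\sigma_{\mu_0}(\psi^{(n)}) = \sigma_{\mu_0}(\varphi)$ is a positive constant; $\sigma$-invariance of $\mu_0$ gives $\int \psi^{(n)}\,d\mu_0 = \int\varphi\,d\mu_0=0$; and growth of $\|\psi^{(n)}\|_\beta$ will then force the ratio to zero.

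For the coboundary factor, I would pick a state $a \in S$ admitting a predecessor $b$ of out-degree at least two (some such $b$ exists in any topologically transitive $\mathfs G$ of positive Gurevich entropy, because a strongly connected graph in which every vertex has out-degree one is a single simple cycle, which has zero entropy). Set $u_n := n\cdot 1_{[a]}$; since $1_{[a]}$ is constant on cylinders of length one, $u_n \in \mathcal{H}_\beta$ with $\|u_n\|_\beta \leq 2n$. Taking points $\un{x} \in [b,a,\ldots]$ and $\un{y} \in [b,c,\ldots]$ for some $c \neq a$, one computes $|(u_n - u_n\circ\sigma)(\un{x}) - (u_n - u_n\circ\sigma)(\un{y})| = n$ while $e^{-\beta\, t(\un{x},\un{y})} = e^{-\beta}$, yielding $\|u_n - u_n\circ\sigma\|_\beta \geq n e^\beta \to \infty$.

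The main step, which I expect to be the principal obstacle, is producing $\varphi \in \mathcal{H}_\beta$ with zero $\mu_0$-mean and positive asymptotic variance. The natural candidate is $\varphi := 1_{[a']} - \mu_0[a']$ for an appropriate state $a'$. By Theorem~\ref{t.sigma}(1), $\sigma_{\mu_0}(\varphi) > 0$ is equivalent to $1_{[a']}$ not being cohomologous to a constant via a continuous transfer function; by the Livshits theorem (as invoked in the proof of Theorem~\ref{t.pressure-convex-prop}) this reduces to exhibiting two periodic orbits with distinct visit frequencies to $[a']$. Positive Gurevich entropy guarantees such a choice: either some state $a'$ admits first-return loops of two distinct lengths $\ell_1 \neq \ell_2$, giving periodic orbits with visit frequencies $1/\ell_1 \neq 1/\ell_2$; or all first-return loops at every state share a common length, in which case $h>0$ forces the multiplicity $f_\ell(b) \geq 2$ at some state $b$, so two distinct length-$\ell$ loops at $b$ must traverse different intermediate vertices, and choosing $a'$ to be a vertex appearing in one such loop but not the other distinguishes the visit frequencies.

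With $a$ and $a'$ chosen as above (independently), the function $\psi^{(n)} = \varphi + u_n - u_n\circ\sigma$ lies in $\mathcal{H}_\beta$, satisfies $\int\psi^{(n)}\,d\mu_0 = 0$, has $\sigma_{\mu_0}(\psi^{(n)}) = \sigma_{\mu_0}(\varphi)>0$ constant in $n$, and $\|\psi^{(n)}\|_\beta \geq n e^\beta - \|\varphi\|_\beta \to \infty$. The ratio $\sigma_{\mu_0}(\psi^{(n)})/\|\psi^{(n)}\|_\beta$ therefore tends to zero, proving the lemma.
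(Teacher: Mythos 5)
Your approach is a valid alternative to the paper's, and it is in fact ``dual'' to it: the paper makes the \emph{numerator} tend to zero while the denominator stays bounded away from zero (taking $\psi^{(n)}:=\tfrac1n\psi+(\psi-\psi\circ\sigma)$ for a single fixed non-coboundary $\psi$, so that $\sigma_{\mu_0}(\psi^{(n)})=\sigma_{\mu_0}(\psi)/n\to 0$ while $\|\psi^{(n)}\|_\beta\to\|\psi-\psi\circ\sigma\|_\beta>0$), whereas you keep the numerator constant and make the \emph{denominator} diverge. Both are legitimate and essentially equally short. Your coboundary estimate for $u_n=n\,1_{[a]}$ and the lower bound $\|u_n-u_n\circ\sigma\|_\beta\geq ne^\beta$ are correct.

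There is, however, a genuine gap in your construction of $\varphi$. In your second case (all first-return loops at every state have the same length), after obtaining $f_\ell(b)\geq 2$ you assert without argument that two distinct length-$\ell$ loops at $b$ must contain a vertex $a'$ appearing in one but not the other. As stated this is not obvious: two distinct loops could a priori visit the same set of intermediate vertices, merely in a different order, in which case the visit frequencies to any single $[a']$ coincide and $1_{[a']}$ does not separate them. The claim is in fact true, but it needs the ``all first-return loops have length $\ell$'' hypothesis: one first checks that each interior vertex appears at most once per first-return loop (else splicing gives a shorter first-return loop), and then that a shared interior vertex must occupy the same index in both loops (else splicing the prefix of one with the suffix of the other gives a first-return loop of length $\neq\ell$); the first index at which the two loops differ then yields the desired $a'$. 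Without this argument your proof does not close. The paper sidesteps this entirely by not insisting that $\varphi$ be a $1$-cylinder indicator: given two disjoint periodic orbits of the same period $p$ (which exist since $h>0$), one can take \emph{any} H\"older $\varphi$ whose Birkhoff $p$-sums differ on the two orbits (e.g.\ a bump function supported on a cylinder around one orbit missing the other), normalize to zero $\mu_0$-mean, and conclude via Livshits and Theorem~\ref{t.sigma}. That is shorter and avoids the case analysis altogether.
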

\begin{proof}
There are two  periodic points $\un{x},\un{y}$ with the same (perhaps non-minimal) period $p$, and with disjoint orbits: $\sigma^m(\un{x})\neq \sigma^n(\un{y})$ for all $m,n$.
Otherwise the Gurevich entropy equals zero.

Construct $\psi\in\mathcal H_\beta$ such that
$
\displaystyle\sum_{j=0}^{p-1}\psi(\sigma^j(\un{x}))\neq \sum_{j=0}^{p-1}\psi(\sigma^j(\un{y}))
$ and $\int\psi d\mu_0=0$.
Then $\psi$ cannot be cohomologous to a constant
(otherwise it would give all periodic points of fixed period the same weight).
By Theorem~\ref{t.sigma}, $\sigma_{\mu_0}(\psi)\neq 0$.
In addition,
since $\psi$ is clearly non-constant,
$\psi\neq\psi\circ\sigma$,
whence $\|\psi-\psi\circ\sigma\|_\beta\neq 0$.

Now take $\psi^{(n)}:=\frac{1}{n}\psi+(\psi-\psi\circ\sigma)$.
On the one hand $\sigma_{\mu_0}(\psi^{(n)})=\sigma_{\mu_0}(\psi/n)=\sigma_{\mu_0}(\psi)/n$, a sequence of positive numbers which converges to zero.
On the other hand $\|\psi^{(n)}\|_\beta\geq \|\psi-\psi\circ\sigma\|_\beta-\|\psi/n\|_\beta\to \|\psi-\psi\circ\sigma\|_\beta\neq 0$.
\end{proof}

We can now explain why we needed the constants $\eps_\theta(\phi), M_\theta(\phi)$  in  Theorem \ref{t.TDF-for-SPR} to be independent of  $\psi$.
We do this in the  case of main interest  $\phi\equiv 0$, when $\mu_0$ is the measure of maximal entropy. In this case Theorem \ref{t.optimal-EKP} gives a nearly optimal EKP inequality in the regime
$$
h_\mu(\sigma)\geq h-\eps^2C^\ast_\theta(0)(\sigma_{\mu_0}(\psi)/\|\psi\|_\beta)^6.
$$
In the absence of the uniformity in $\psi$ in Theorem \ref{t.TDF-for-SPR} the best we could have hoped for was to prove this bound in the regime
$
h_\mu(\sigma)\geq h-\eps^2C^\ast_\theta(\psi),
$
but without further information on the structure of $C^\ast_\theta(\psi)$.

\section{The EKP inequality for arbitrary measures}
Our next result (which reduces in the case of subshifts of finite type and $\phi\equiv 0$ to a result of S.\ Kadyrov \cite{Kadyrov-Effective-Uniqueness}) is an inequality for all $\sigma$-invariant measures, also those with low entropy or pressure.

Suppose $\Sigma^+$ is a topologically mixing countable Markov shift with finite and positive Gurevich entropy. Let $\phi$ be a $\theta$-weakly H\"older continuous SPR potential such that $\sup\phi<\infty$. Let $m$ denote the unique equilibrium measure of $\phi$. Fix $\beta$ such that $e^{-\beta}\leq \theta$, and let $(\mathcal H_\beta,\|\cdot\|_\beta)$ denote the space of $\beta$-H\"older continuous functions on $\Sigma^+$, see \eqref{eq:H-beta}.

\begin{lemma}
There exist constants $K_\beta$, $Q(\phi)>0$ such that for every $\sigma$-invariant probability $\mu$ there exists some function $A\in\mathcal H_\beta$ such that $\int A dm=\int A d\mu=0$, $\|A\|_\beta\leq K_\beta$, and $\sigma_{m}(A)>Q(\phi)$.
\end{lemma}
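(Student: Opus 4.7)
The plan is to find a fixed three-dimensional subspace $V\subset\mathcal H_\beta$, independent of $\mu$, on which the asymptotic variance $\sigma_m^2$ is positive definite; once this is done, for any invariant $\mu$ the linear map $T_\mu\colon V\to\R^2$ given by $A\mapsto(\int A\,dm,\int A\,d\mu)$ has a kernel $V_\mu$ of dimension at least $1$, and rescaling any nonzero $A\in V_\mu$ so that $\|A\|_\beta=1$ gives
\[
\sigma_m(A)\geq Q_0:=\inf\bigl\{\sigma_m(B):B\in V,\ \|B\|_\beta=1\bigr\}>0,
\]
the positivity of $Q_0$ being a consequence of compactness of the unit $\|\cdot\|_\beta$-sphere in the finite-dimensional space $V$ together with the positive definiteness of $\sigma_m$ there. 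The lemma then holds with $K_\beta:=1$ and $Q(\phi):=Q_0$.

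To build $V$, I use the positive Gurevich entropy to select four distinct periodic orbits $O_1,O_2,O_3,O_4\subset\Sigma^+$; such orbits exist in abundance because $\#\{\un x:\sigma^n\un x=\un x,\ x_0=a\}$ grows exponentially in $n$. Their union is a finite set of pairwise distinct points of $\Sigma^+$, so for $N$ sufficiently large the length-$N$ cylinders $C(\un p,N):=[p_0,\dots,p_{N-1}]$ around the points $\un p\in\bigcup_j O_j$ are pairwise disjoint, and in particular no point of $O_i$ lies in any $C(\un p,N)$ for $\un p\in O_j$ when $i\neq j$. Fix such an $N$ and set
\[
f_j:=\sum_{\un p\in O_j}1_{C(\un p,N)}\qquad (j=1,2,3),\qquad V:=\Span\{f_1,f_2,f_3\}.
\]
Each $f_j$ is locally constant on cylinders of length $N$, hence lies in $\mathcal H_\beta$ with norm bounded by a constant depending only on $\beta$, $N$, and $|O_j|$. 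By construction the orbital averages $\alpha_{O_i}(f_j):=|O_i|^{-1}\sum_{\un p\in O_i}f_j(\un p)$ satisfy $\alpha_{O_i}(f_j)=\delta_{ij}$ for all $i\in\{1,2,3,4\}$ and $j\in\{1,2,3\}$.

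The main obstacle is verifying that $\sigma_m$ is actually positive definite on $V$. For this, suppose $g=c_1f_1+c_2f_2+c_3f_3\in V$ satisfies $\sigma_m(g)=0$. Theorem~\ref{t.sigma}(1) produces a continuous $r\colon\Sigma^+\to\R$ and a constant $c\in\R$ with $g=r-r\circ\sigma+c$, so telescoping along any periodic orbit $O$ of period $n$ yields $\sum_{\un p\in O}g(\un p)=nc$, i.e.\ $\alpha_O(g)=c$. Applying this identity to $O_1,O_2,O_3$ gives $c_j=c$ for $j=1,2,3$, and applying it to the ``auxiliary'' fourth orbit $O_4$ gives $\alpha_{O_4}(g)=0=c$. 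Hence $c_1=c_2=c_3=0$, so $g\equiv 0$ and $\sigma_m$ is a norm on $V$; the fourth orbit is essential here, as without it the element $f_1+f_2+f_3$ could in principle be cohomologous to a nonzero constant. This completes the proof.
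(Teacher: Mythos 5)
Your proposal is correct, and it takes a genuinely different route from the paper.  The paper picks four periodic \emph{points} of a common period $p$ and common initial symbol, forms only a two-dimensional space spanned by the mean-zero functions $a=1_{[\un{x}^p]}-m[\un{x}^p]$ and $b=1_{[\un{z}^p]}-m[\un{z}^p]$, and then splits into cases: if $\int a\,d\mu=0$ (resp. $\int b\,d\mu=0$) take $A=a$ (resp. $A=b$), otherwise take the explicit linear combination $A=(a\int b\,d\mu-b\int a\,d\mu)/\sqrt{(\int a\,d\mu)^2+(\int b\,d\mu)^2}$.  It then proves that $(s,t)\mapsto\sigma_m^2(sa+tb)$ is a positive-definite quadratic form, quoting Theorem~\ref{t.TDF-for-SPR}(6) to get the quadratic structure and using the Livshits-theorem argument (applied to the same four periodic points, each of which is contained in exactly one of the cylinders) to get definiteness.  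You instead build a three-dimensional space spanned by indicator sums $f_1,f_2,f_3$ around three periodic orbits, obtain $A$ abstractly as a kernel element of $T_\mu\colon V\to\R^2$ (so no case split and no need for a shared period or initial symbol), and use a fourth orbit to rule out the possibility that some combination is cohomologous to a nonzero constant.  The one place you gloss over is the positivity of $Q_0$: compactness of the unit sphere only yields $Q_0>0$ once you know $\sigma_m$ is \emph{continuous} on $V$, which requires first verifying that $\sigma_m^2$ is a quadratic form on $V$ (the mixed term $\lim_n\frac1n\mathrm{Cov}_m((f_i)_n,(f_j)_n)$ exists by polarization from Theorem~\ref{t.TDF-for-SPR}(4), or directly from Theorem~\ref{t.TDF-for-SPR}(6)).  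The paper handles exactly this point explicitly; you should add a sentence to the same effect, after which the argument is complete and, arguably, tidier than the original.
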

\begin{proof}

As in Lemma~\ref{lem:small_sigma}, we can find $p\geq 1$ and four periodic points $\un{x},\un{y},\un{z},\un{w}\in \Sigma^+$ with period $p$, such that
$$
x_0=y_0=z_0=w_0
$$
and so that the orbits of $\un{x}$, $\un{y}$, $\un{z}$, $\un{w}$ are disjoint.
Let $\un{x}^{\text{p}}:=(x_0,\ldots,x_{p-1},x_0)$,
$\un{y}^{\text{p}}:=(y_0,\ldots,y_{p-1},y_0)$,
$\un{z}^{\text{p}}:=(z_0,\ldots,z_{p-1},z_0)$,
$\un{w}^{\text{p}}:=(w_0,\ldots,w_{p-1},w_0)$.
Define
$$a(\cdot):=1_{[\un{x}^{\text{p}}]}(\cdot)-m([\un{x}^{\text{p}}]),\  b(\cdot):=1_{[\un{z}^{\text{p}}]}(\cdot)-m([\un{z}^{\text{p}}]).$$

Recall the notation $\vf_n=\sum_{k=0}^{n-1}\vf\circ\sigma^k$.
Since $\{\sigma^k\un{x}\}_{k\in\N}$, $\{\sigma^k\un{y}\}_{k\in\N}$, $\{\sigma^k\un{z}\}_{k\in\N}$, $\{\sigma^k\un{w}\}_{k\in\N}$ are disjoint,
the orbit of $\un{y}$ does not enter $[\un{x}^{\text{p}}]$,
and the orbit of $\un{w}$ does not enter $[\un{z}^{\text{p}}]$. In particular, $a_p(\un{x})\neq a_p(\un{y})$ and $b_p(\un{z})\neq b_p(\un{w})$.

This implies that  $a,b$ are not cohomologous to constants
(otherwise  $a_p(\un{x})=a_p(\un{y})=\const p$ for any pair of $p$-periodic orbits).
So $\sigma_{m}^2(a),\sigma_{m}^2(b)\neq 0$.

By construction,  $\int a dm=\int b dm=0$.
 If $\int a d\mu=0$ take $A:=a$.
 If $\int b d\mu=0$, take $A:=b$.
Notice that $A$ is independent of $\mu$, therefore
$\|A\|_\beta$, $\sigma_{m}^2(A)$ are independent of $\mu$,
and the lemma follows.

In the remaining case,
$\int a d\mu\neq 0$ and $\int b d\mu\neq 0$,
and we define
$$
A:=\frac{a\int b d\mu-b\int a d\mu}{\sqrt{(\int a d\mu)^2+(\int b d\mu)^2}}.
$$
Clearly $\int A d\mu=\int A dm=0$, and
\begin{align*}
\|A\|_\beta&\leq \frac{|\int a d\mu|+|\int b d\mu|}{\sqrt{(\int a d\mu)^2+(\int b d\mu)^2}}\max\{\|a\|_\beta,\|b\|_\beta\}\leq 2\max\{\|a\|_\beta,\|b\|_\beta\}=:K_\beta.
\end{align*}
$K_\beta$ is independent of $\mu$,
and only depends on $\beta$ and $\Sigma^+$.
To complete the proof,
it remains to  bound $\sigma_{m}^2(A)$ from below by a constant
which is independent of $\mu$.

We need for this purpose the function $Q(s,t):=\sigma_{m}^2(sa+tb)$.
By Theorem~\ref{t.TDF-for-SPR},
\begin{align*}
&\sigma_{m}^2(sa+tb)=\frac{d^2}{d\tau^2}\biggr|_{\tau=0}
\p_{\phi,s a+t b}(\tau)\equiv \frac{d^2}{d\tau^2}\biggr|_{\tau=0}
P_G(\phi+\tau s a+\tau t b)\\
&\overset{!}{=} s^2\frac{\partial^2 P}{\partial u^2}(0,0)+2st
\frac{\partial^2 P}{\partial u\partial v}(0,0)+t^2 \frac{\partial^2 P}{\partial v^2}(0,0),\text{ where }P(u,v):=P_G(\phi+ua+vb).
\end{align*}
To justify $\overset{!}{=}$, we use Theorem~\ref{t.TDF-for-SPR}~(6).
We see that $Q(s,t)$ is a quadratic form.

By the definition of the asymptotic variance, $Q(s,t)\geq 0$ for all $(s,t)$.
We claim that $Q$  is positive definite.

Suppose $Q(s,t)=0$, then
$\sigma_{m}^2(sa+tb)=0$, whence $sa+tb$ is cohomologous to a constant. In this case, by Livshits theorem, $sa+tb$  gives the same weight to all periodic orbits with the same period. In particular
\begin{align*}
sa_p(\un{x})+tb_p(\un{x})&=sa_p(\un{y})+tb_p(\un{y})\\
sa_p(\un{z})+tb_p(\un{z})&=sa_p(\un{w})+tb_p(\un{w}).
\end{align*}
Recall that the orbits of $\un{x}$, $\un{y}$, $\un{z}$, $\un{w}$ are disjoint,
so the orbits of $\un{y}$, $\un{z}$, $\un{w}$ do not enter
$[\un{x}^{\text{p}}]$
whence $a_p(\un{y})=a_p(\un{z})=a_p(\un{w})=-pm([\un{x}^{\text{p}}])$,
and the orbits of $\un{x}$, $\un{y}$, $\un{w}$
do not enter $[\un{z}^{\text{p}}]$, so
$b_p(\un{x})=b_p(\un{y})=b_p(\un{w})=-pm([\un{z}^{\text{p}}])$.
On the other hand $a_p(\un{x})=n_{\un{x}}-pm([\un{x}^{\text{p}}]), b_p(\un{z})=n_{\un{z}}-pm([\un{z}^{\text{p}}])$
with $n_{\un{x}},n_{\un{z}}$ positive integers.
Substituting this above, we obtain $n_{\un{x}} s=0, n_{\un{z}} t=0$,
whence $s=t=0$.
So $Q(s,t)=0\Rightarrow (s,t)=(0,0)$,
and $Q$ is positive definite.

Since $Q(s,t)$ is a positive definite quadratic form, there exists $Q_0>0$ such that $Q(s,t)\geq Q_0^2(s^2+t^2)$ for all $(s,t)\in\R^2$. In particular,
\begin{align*}
\sigma_{m}^2(A)&=Q\left(\frac{\int b d\mu}{\sqrt{(\int a d\mu)^2+(\int b d\mu)^2}},
\frac{-\int a d\mu}{\sqrt{(\int a d\mu)^2+(\int b d\mu)^2}}
\right)\geq Q_0^2.
\end{align*}
Notice that $Q_0$ depends only on $a,b$ and $\phi$,
and is therefore independent of $\mu$. We let $Q(\phi):=Q_0$.
\end{proof}

\begin{lemma}
Given $\beta>0$   $\exists K_\beta'(\phi)>0$ as follows: For every $\psi\in\mathcal H_\beta$ and $\mu\in\mathfs M(\Sigma^+)$,  $\exists\vf\in\mathcal H_\beta$ such that $\int\vf dm=\int\psi dm$,  $\int\vf d\mu=\int\psi d\mu$, $\|\vf\|_\beta\leq K_\beta'(\phi)\|\psi\|_\beta$ and
\begin{equation}\label{varphi}
\frac{\|\vf\|_\beta}{\sigma_{m}(\vf)}\leq K_\beta'(\phi).
\end{equation}
\end{lemma}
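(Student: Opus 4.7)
The plan is to build $\vf$ by adding to $\psi$ a suitably scaled copy of the function $A$ furnished by the previous lemma. Specifically, let $A\in\mathcal H_\beta$ satisfy $\int A\, dm=\int A\, d\mu=0$, $\|A\|_\beta\leq K_\beta$, and $\sigma_m(A)\geq Q(\phi)>0$, with $K_\beta$ and $Q(\phi)$ independent of $\psi$ and $\mu$. I would then set
\[
\vf:=\psi+C\|\psi\|_\beta\, A,
\]
where $C>0$ is a constant depending only on $\beta$ and $\phi$, chosen below. (The degenerate case $\psi\equiv 0$ is handled by taking $\vf\equiv 0$ and interpreting $0/0$ as $0$ in \eqref{varphi}.)

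Two of the three required bounds are then immediate. The integral conditions $\int\vf\, dm=\int\psi\, dm$ and $\int\vf\, d\mu=\int\psi\, d\mu$ follow from $\int A\, dm=\int A\, d\mu=0$. The H\"older-norm bound follows from the triangle inequality for $\|\cdot\|_\beta$:
\[
\|\vf\|_\beta\leq \|\psi\|_\beta+C\|\psi\|_\beta\|A\|_\beta\leq (1+CK_\beta)\|\psi\|_\beta.
\]

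The substantive step is the lower bound on $\sigma_m(\vf)$. Here I would use that $\sigma_m$ is a seminorm on $\mathcal H_\beta$: by definition $\sigma_m^2(\cdot)=\lim_n \tfrac{1}{n}\Var_m((\cdot)_n)$ is a positive semidefinite quadratic form, finite on $\mathcal H_\beta$ by Theorem~\ref{t.sigma}(3), so by Cauchy--Schwarz it obeys the (reverse) triangle inequality. Therefore
\[
\sigma_m(\vf)\geq C\|\psi\|_\beta\sigma_m(A)-\sigma_m(\psi)\geq \bigl(CQ(\phi)-M\bigr)\|\psi\|_\beta,
\]
where $M:=M_\theta(\phi)$ and the bound $\sigma_m(\psi)\leq M\|\psi\|_\beta$ is Theorem~\ref{t.sigma}(3). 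Choosing $C:=(M+1)/Q(\phi)$ makes $\sigma_m(\vf)\geq \|\psi\|_\beta$, and combining with the norm estimate gives $\|\vf\|_\beta/\sigma_m(\vf)\leq 1+CK_\beta$. Taking $K_\beta'(\phi):=1+CK_\beta$, a constant depending only on $\beta$ and $\phi$ through $K_\beta$, $Q(\phi)$, and $M_\theta(\phi)$, completes the proof.

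The main conceptual work has already been absorbed into the preceding lemma, which produces $A$ with $\sigma_m(A)$ uniformly bounded below while its integrals against both $m$ and the arbitrary measure $\mu$ vanish. Given that lemma, the present statement is essentially a one-line linear-algebraic perturbation argument exploiting the seminorm property of $\sigma_m$, so no serious obstacle remains at this stage.
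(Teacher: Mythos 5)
Your proof is correct and uses the same key ingredient (the auxiliary function $A$ from the preceding lemma) but combines it differently from the paper. The paper splits into two cases: if $\sigma_m(\psi)\geq \tfrac13 Q\|\psi\|_\beta$ it simply takes $\vf:=\psi$, and otherwise it takes $\vf:=\psi+\|\psi\|_\beta A$ (coefficient $1$, not a large constant) and controls the cross-covariance term directly by Cauchy--Schwarz at the finite-$n$ level, using the case-2 hypothesis $\sigma_m(\psi)<\tfrac13 Q\|\psi\|_\beta$ to make the cross term small. You instead always add a fixed large multiple $C\|\psi\|_\beta A$ with $C:=(M+1)/Q(\phi)$ chosen to dominate the a priori bound $\sigma_m(\psi)\leq M\|\psi\|_\beta$ from Theorem~\ref{t.sigma}(3), and then invoke the reverse triangle inequality for $\sigma_m$. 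This avoids the case split and is shorter, at the cost of a larger explicit constant (yours involves $M_\theta(\phi)$, which the paper's case-2 estimate never uses). One small remark on your justification that $\sigma_m$ is a seminorm: rather than asserting that $\sigma_m^2$ is a positive semidefinite quadratic form (which tacitly requires the limiting cross-covariance to exist), it is even cleaner to apply the $L^2$ triangle inequality $\sqrt{\Var_m((\psi+\vf)_n)}\leq\sqrt{\Var_m(\psi_n)}+\sqrt{\Var_m(\vf_n)}$ for each $n$, divide by $\sqrt{n}$, and pass to the limit; this is essentially what the paper does and sidesteps any discussion of the bilinear form. Either way the argument is sound.
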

\begin{proof}
Let $Q:=Q(\phi)$, $K_\beta$ and $A(\cdot)$ be as in the previous lemma.

If  $\sigma_{m}(\psi)\geq \frac{1}{3}Q\|\psi\|_\beta$,
we take $\vf:=\psi$, and note that
$$
\frac{\|\vf\|_\beta}{\sigma_{m}(\vf)}\leq \frac{3}{Q}.
$$

If $\sigma_{m}(\psi)<\frac{1}{3}Q\|\psi\|_\beta$,
we take $\vf:=\psi+\|\psi\|_\beta A$.
Then
$
\int\vf dm=\int\psi dm$, $\int\vf d\mu=\int\psi d\mu$,
and $\|\vf\|_\beta\leq (1+\|A\|_\beta)\|\psi\|_\beta\leq (1+K_\beta)\|\psi\|_\beta
$.
In addition,
\begin{align*}
\sigma_{m}^2(\vf)&=\lim_{n\to\infty}\frac{1}{n}\Var(\vf_n),\text{ where }\vf_n:=\sum_{k=0}^{n-1}\vf\circ\sigma^k\text{, }\Var(B):=\int(B-{\textstyle\int} Bdm)^2 dm\\
&=\lim_{n\to\infty}\frac{1}{n}\Var(\psi_n)
+\lim_{n\to\infty}\frac{1}{n}\Var(\|\psi\|_\beta A_n)+2\lim_{n\to\infty}\frac{1}{n}\mathrm{Cov}(\psi_n, \|\psi\|_\beta A_n)\\
&\hspace{2cm}\text{ where }\mathrm{Cov}(B,C):=\int(B-{\textstyle\int} B dm)(C-{\textstyle\int} C dm)dm\\
&\geq 0+\|\psi\|_\beta^2\sigma_{m}^2(A)-2\|\psi\|_\beta\lim_{n\to\infty}\frac{1}{n}
\sqrt{\Var(\psi_n)\Var(A_n)}\text{\ \ \  (Cauchy-Schwarz)}\\
&= \|\psi\|_\beta^2\sigma_{m}^2(A)-2
\|\psi\|_\beta\sigma_{m}(\psi)\sigma_{m}(A) \ \ \\
&\geq \frac{1}{3}
Q^2\|\psi\|_\beta^2 \ \ \ (\because \sigma_{m}(A)\geq Q,\  \sigma_{m}(\psi)<\tfrac{1}{3}Q\|\psi\|_\beta),
\end{align*}
so $\sigma_{m}(\vf)\geq \frac{1}{\sqrt{3}}Q\|\psi\|_\beta$.

We saw above that
$\|\vf\|_\beta\leq (1+K_\beta)\|\psi\|_\beta$. It follows that
$
\frac{\|\vf\|_\beta}{\sigma_{m}(\vf)}\leq \sqrt{3}\frac{1+K_\beta}{Q}
$.
The lemma follows with $K_\beta'(\phi):=\max\{\frac{3}{Q(\phi)}, \sqrt{3}\frac{1+K_\beta}{Q(\phi)}, K_\beta+1\}$.
\end{proof}

\begin{theorem}
\label{t.suboptimal-EKP}
Suppose $\Sigma^+$ is a topologically transitive countable Markov shift with finite Gurevich entropy.
Let $\phi$ be a $\theta$-weakly H\"older continuous SPR potential such that $\sup\phi<\infty$, and let $m$ be the unique equilibrium measure of $\phi$. If $e^{-\beta}\leq \theta$, then $\exists C_{\theta,\beta}(\phi)>0$ such that for every $\psi\in\mathcal H_\beta$, and every  $\mu\in\mathfs M(\Sigma^+)$,
$$
\left|\int\psi d\mu-\int\psi dm\right|\leq C_{\theta,\beta}(\phi)\|\psi\|_\beta\sqrt{P_G(\phi)-P_\mu(\phi)}.
$$
\end{theorem}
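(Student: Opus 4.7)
The plan is to reduce the problem to Theorem~\ref{t.optimal-EKP} using the ``cohomological'' modification of $\psi$ supplied by the preceding lemma, and to handle the complementary low-pressure regime by a trivial bound. As in the proof of Theorem~\ref{t.optimal-EKP}, the spectral decomposition $\Sigma^+=\Sigma_0^+\uplus\cdots\uplus\Sigma_{p-1}^+$ reduces everything to the topologically mixing case; since the statement is trivial when $\psi\equiv 0$, assume henceforth that $\sigma:\Sigma^+\to\Sigma^+$ is topologically mixing and $\psi\not\equiv 0$.

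Given $\psi\in\mathcal H_\beta$ and $\mu\in\mathfs M(\Sigma^+)$, apply the previous lemma to obtain $\vf\in\mathcal H_\beta$ satisfying
\[
\int\vf dm=\int\psi dm,\quad \int\vf d\mu=\int\psi d\mu,\quad \|\vf\|_\beta\leq K_\beta'(\phi)\|\psi\|_\beta,\quad \frac{\|\vf\|_\beta}{\sigma_m(\vf)}\leq K_\beta'(\phi).
\]
Theorem~\ref{t.sigma}(3) then gives $\sigma_m(\vf)\leq M_\theta(\phi)\|\vf\|_\beta\leq M_\theta(\phi)K_\beta'(\phi)\|\psi\|_\beta$. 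Fix $\eps_0:=\tfrac{1}{2}\eps_\theta^\ast(\phi)$ and set $\delta_0:=C_\theta^\ast(\phi)\,\eps_0^2\, K_\beta'(\phi)^{-6}$.

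\emph{High-pressure case: $P_G(\phi)-P_\mu(\phi)\leq\delta_0$.} The lower bound $\sigma_m^6(\vf)/\|\vf\|_\beta^6\geq K_\beta'(\phi)^{-6}$ gives $P_\mu(\phi)\geq P_G(\phi)-C_\theta^\ast(\phi)\eps_0^2\,\sigma_m^6(\vf)/\|\vf\|_\beta^6$, so Theorem~\ref{t.optimal-EKP} applied to $\vf$ yields
\begin{align*}
\left|\int\psi d\mu-\int\psi dm\right|&=\left|\int\vf d\mu-\int\vf dm\right|\leq \sqrt{2}\,e^{\eps_0}\sigma_m(\vf)\sqrt{P_G(\phi)-P_\mu(\phi)}\\
&\leq \sqrt{2}\,e^{\eps_0}M_\theta(\phi)K_\beta'(\phi)\,\|\psi\|_\beta\sqrt{P_G(\phi)-P_\mu(\phi)}.
\end{align*}
\emph{Low-pressure case: $P_G(\phi)-P_\mu(\phi)>\delta_0$.} Here the trivial estimate $|\int\psi d\mu-\int\psi dm|\leq 2\|\psi\|_\infty\leq 2\|\psi\|_\beta$ combined with $1<\delta_0^{-1/2}\sqrt{P_G(\phi)-P_\mu(\phi)}$ gives the claim with constant $2/\sqrt{\delta_0}$. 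Setting $C_{\theta,\beta}(\phi):=\max\{\sqrt{2}\,e^{\eps_0}M_\theta(\phi)K_\beta'(\phi),\,2/\sqrt{\delta_0}\}$ completes the proof.

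The main obstacle is that Theorem~\ref{t.optimal-EKP} is informative only when the pressure deficit is small relative to $\sigma_m(\psi)^6/\|\psi\|_\beta^6$, and by Lemma~\ref{lem:small_sigma} this ratio can be arbitrarily close to zero for a general $\psi\in\mathcal H_\beta$, so that the ``good'' regime may be empty. The preceding lemma is exactly what overcomes this obstruction: it replaces $\psi$ with a function $\vf\in\mathcal H_\beta$ preserving both integrals $\int\cdot\,dm$ and $\int\cdot\,d\mu$ and for which $\sigma_m(\vf)/\|\vf\|_\beta$ is bounded below by a constant depending only on $\phi$ and $\beta$. With this substitution in hand, the dichotomy into small and large pressure deficit becomes routine.
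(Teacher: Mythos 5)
Your proof is correct, and it takes a route that differs from the paper's in the organization of the estimate, though it relies on the same key lemma (the construction of $\vf$ with the ratio $\|\vf\|_\beta/\sigma_m(\vf)$ bounded above by a constant independent of $\mu,\psi$). The paper does not invoke Theorem~\ref{t.optimal-EKP} here; instead it normalizes $\ov{\vf}:=\vf/\|\vf\|_\beta$ and re-derives everything directly from Corollary~\ref{c.h-double-ineq}, splitting on the size of $|a-a_0|=|\int\ov\vf\,d\mu-\int\ov\vf\,dm|$ rather than on the size of $P_G(\phi)-P_\mu(\phi)$. In its ``far'' regime $|a-a_0|>\delta\sigma^4/H$, the paper gets from the corollary a bound that is linear in $P_G(\phi)-P_\mu(\phi)$, and then uses the fact that $\tfrac12|a-a_0|\le 1$ (since $\|\ov\vf\|_\beta=1$) together with $x\le\sqrt{x}$ on $[0,1]$ to convert it to a square-root bound. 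Your approach is leaner: you invoke Theorem~\ref{t.optimal-EKP} as a black box in the high-pressure regime (having observed, via the lemma, that the applicability threshold $C_\theta^\ast(\phi)\eps_0^2\,\sigma_m^6(\vf)/\|\vf\|_\beta^6$ is bounded below by a constant $\delta_0$ independent of $\mu,\psi$), and in the complementary low-pressure regime the bound $|\int\psi\,d\mu-\int\psi\,dm|\le 2\|\psi\|_\beta$ together with $P_G(\phi)-P_\mu(\phi)>\delta_0$ finishes the argument trivially. Both routes give the same type of constant; your version avoids redoing the pressure-function estimates and makes the dichotomy (``close enough to use the sharp estimate'' vs.\ ``far enough that a trivial bound suffices'') more transparent, at the cost of losing the explicit constant $C'_{\theta,\beta}K=\max\{2M,4K\sqrt{H/\delta}\}\cdot K$ that the paper's direct computation yields.
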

\medskip
\noindent
{\em Remark.\/} In the special case $\phi\equiv 0$, $m$ is the measure of maximal entropy, $P_G(\phi)$ is the entropy of $m$, and the inequality becomes \eqref{EKP-ineq}.

\begin{proof}
{As in the proof of Theorem \ref{t.optimal-EKP}, it is sufficient to prove the theorem in the topologically mixing case.}

Fix a $\sigma$-invariant  measure $\mu$ and some $\psi\in\mathcal H_\beta$.
If $\int\psi d\mu=\int\psi dm=0$ then there is nothing to prove so suppose one of these integrals is non-zero.
Let $K:=K_\beta'(\phi)$ be a constant independent of $\mu,\psi$ as in the previous lemma, and  choose $\vf\in\mathcal H_\beta$ such that $\|\vf\|_\beta\leq K\|\psi\|_\beta$, $\int\psi d\mu=\int\vf d\mu$, $\int\psi dm=\int\vf dm$, and
$$
\frac{\|\vf\|_\beta}{\sigma_{m}(\vf)}\leq K.
$$
Notice that $\|\vf\|_\beta\neq 0$, because at least one of the integrals $\int\vf d\mu,\int\vf dm$ is non-zero.
So we can normalize  $\displaystyle \overline{\vf}:=\frac{\vf}{\|\vf\|_\beta}.$

Let $\delta:=\delta_\theta(\phi)$, $H:=H_\theta(\phi)$ be the constants in Corollary~\ref{c.h-double-ineq},
and let $M:=M_\theta(\phi)$ be the constant from Theorem~\ref{t.sigma}. Without loss of generality, $e^{\delta}<\sqrt{2}$.
 Set $a:=\int\overline{\vf} d\mu$, $a_0=\int\overline{\vf} dm$,
 $\sigma:=\sigma_{m}(\overline{\vf})$.
If $|a-a_0|\leq\frac{\delta\sigma^4}{H}$, then
$$
\left|\int\overline{\vf} d\mu-\int\overline{\vf} dm\right|\leq e^\delta\sqrt{2}\sigma\sqrt{\q_{\phi,\ov{\vf}}(a_0)-\mathfrak{q}_{\phi,\ov{\vf}}(a)}\overset{!}{\leq}2M\sqrt{P_G(\phi)-P_\mu(\phi)},
$$
where $\overset{!}{\leq}$ is because $\q_{\phi,\ov{\vf}}(a_0)=P_G(\phi)$, $\q_{\phi,\ov{\vf}}(a)\geq P_\mu(\phi)$, and
$\sigma_{m}(\overline{\vf})\leq M\|\overline{\vf}\|_\beta=M$.

Similarly, if $|a-a_0|> \frac{\delta\sigma^4}{H}$,
then by the 2nd part of Corollary~\ref{c.h-double-ineq}
\begin{align*}
&\frac{1}{2}\left|\int\overline{\vf} d\mu-\int\overline{\vf} dm\right|\leq \frac{1}{2}\frac{8 H}{\delta\sigma^2}(P_G(\phi)-P_\mu(\phi))= \frac{4H}{\delta \sigma_{m}^2(\overline{\vf})}(P_G(\phi)-P_\mu(\phi))\\
&=\frac{4H\|\vf\|_\beta^2}{\delta \sigma_{m}^2(\vf)}(P_G(\phi)-P_\mu(\phi))
\leq \frac{4HK^2}{\delta}(P_G(\phi)-P_\mu(\phi)).
\end{align*}
Since $\|\overline{\vf}\|_\beta=1$,
$\frac{1}{2}|\int\overline{\vf} d\mu-\int \overline{\vf} dm|\leq 1$,
and therefore
\begin{align*}
&\frac{1}{2}\left|\int\overline{\vf} d\mu-\int \overline{\vf} dm\right|\leq \sqrt{\frac{1}{2}\left|\int\overline{\vf} d\mu-\int \overline{\vf}dm\right|}\leq \sqrt{\frac{4HK^2}{\delta}(P_G(\phi)-P_\mu(\phi))},
\end{align*}
whence $\left|\int\overline{\vf} d\mu-\int \overline{\vf} dm\right|\leq (4K\sqrt{H/\delta})\sqrt{P_G(\phi)-P_\mu(\phi)}$.

Let $C_{\theta,\beta}':=C'_{\theta,\beta}(\phi):=\max\{2M,4K\sqrt{H/\delta}\}$.
This depends only on $\beta,\theta$ and $\phi$, and
the following inequality holds  no matter the value of $|a-a_0|$:
\begin{equation}\label{eq.phi}
\left|\int\overline{\vf}d\mu-\int\overline{\vf} dm\right|\leq C_{\theta,\beta}'\sqrt{P_G(\phi)-P_\mu(\phi)}.
\end{equation}
By the choice of $\overline{\vf}$, the left-hand side of \eqref{eq.phi} equals  $\left|\int\psi d\mu-\int\psi dm\right|/\|\vf\|_\beta$, so
$
\left|\int\psi d\mu-\int\psi dm\right|\leq C_{\theta,\beta}'\|\vf\|_\beta\sqrt{P_G(\phi)-P_\mu(\phi)}.
$
Since $\|\vf\|_\beta\leq K\|\psi\|_\beta$, the theorem follows with $C_{\theta,\beta}(\phi):=C_{\theta,\beta}' K$.
\end{proof}

\section{SPR is a necessary condition for the EKP inequality}
Let $\sigma:\Sigma^+\to\Sigma^+$ be a topologically {transitive} TMS associated to a countable directed graph $\mathfs G$, and suppose $\phi:\Sigma^+\to\R$ is a function with summable variations and finite Gurevich pressure.

In this section  we prove that if $\mu_\phi$ satisfies the EKP inequality,
 then $\phi$ must be strongly positively recurrent. Ruette's work implies this for $\phi\equiv 0$ \cite{Ruette}, and we will extend her argument to general potentials. As in Ruette's work, the engine of the proof is the following result, which is of independent interest.
\begin{theorem}\label{prop:ruette_generalized}
Suppose $\Sigma^+$ is topologically mixing. If $\phi$ {is not SPR}, then  $\mathfs G$ contains a  subgraph $\mathfs G'$ such that $\Sigma^+(\mathfs G')\subsetneq\Sigma^+(\mathfs G)$, $\sigma:\Sigma^+(\mathfs G')\to \Sigma^+(\mathfs G')$ is topologically mixing,  $\phi|_{\Sigma^+(\mathfs G')}$ is not SPR, and $P_G(\phi|_{\Sigma^+(\mathfs G')})=P_G(\phi|_{\Sigma^+})$.
\end{theorem}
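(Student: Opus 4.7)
The goal is to extend the argument of Ruette~\cite{Ruette} (which treats $\phi\equiv 0$) to weakly regular potentials; the extension is essentially formal once loop counts are replaced by cylinder-supremum weights, but the same combinatorial dichotomy remains the core of the proof.

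\emph{Step~1 (reformulation).} By Lemma~\ref{l.p-star} and the trivial inequality $Z_n^*\leq Z_n$, the hypothesis ``$\phi$ is not SPR'' is equivalent to
\[
\limsup_{n\to\infty}\tfrac{1}{n}\log Z_n^*(\phi,a)=P_G(\phi)=:P
\]
for some (and hence every) state $a$. Fix such a state $a$. I reduce the theorem to producing a subgraph $\mathfs G'\subsetneq\mathfs G$ containing $a$ that is strongly connected, of period one, and for which $Z_n^*(\phi|_{\Sigma^+(\mathfs G')},a)$ still has exponential growth rate $P$. Granting this, the chain
\[
P\leq P_G^*(\phi|_{\mathfs G'},a)\leq P_G(\phi|_{\mathfs G'})\leq P_G(\phi)=P
\]
collapses to equality, yielding simultaneously the required pressure preservation and, via Lemma~\ref{l.p-star} applied in $\mathfs G'$, the non-SPR property of $\phi|_{\mathfs G'}$. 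Topological mixing of $\sigma:\Sigma^+(\mathfs G')\to\Sigma^+(\mathfs G')$ follows from strong connectedness together with period one, see \S\ref{s.spectral-decomposition}.

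\emph{Step~2 (loop selection).} Enumerate the first-return loops at $a$ as $\Omega=\{\omega_j\}_{j\geq 1}$ with $|\omega_j|=n_j$, and attach to each $\omega_j$ the cylinder-supremum weight
\[
w_j:=\exp\sup_{\un x\in[\omega_j]}\phi_{n_j}(\un x).
\]
Summable variations of $\phi$ give $e^{-B}\sum_{n_j=n}w_j\leq Z_n^*(\phi,a)\leq e^{B}\sum_{n_j=n}w_j$ with $B:=\sum_{k\geq 2}\var_k(\phi)<\infty$, exactly as in the proof of Lemma~\ref{l.trivial}. Topological mixing of $\mathfs G$ forces $\gcd\{n_j:j\geq 1\}=1$. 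I will choose $\Omega'\subsetneq\Omega$ so that (a)~$\limsup_n\tfrac{1}{n}\log\sum_{\omega_j\in\Omega',\,n_j=n}w_j=P$, (b)~$\gcd\{n_j:\omega_j\in\Omega'\}=1$, and (c)~the subgraph $\mathfs G'$ spanned by the edges occurring in loops of $\Omega'$ omits at least one edge of $\mathfs G$. Then $\mathfs G'$ is strongly connected (every vertex lies on an $\Omega'$-loop through $a$), has period one by~(b), and its first-return loops at $a$ include all of $\Omega'$; this gives $P_G^*(\phi|_{\mathfs G'},a)\geq P$, which combined with the upper bound $P_G^*(\phi|_{\mathfs G'},a)\leq P$ from Step~1 yields equality.

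\emph{Step~3 (combinatorial core and main obstacle).} The entire difficulty lies in reconciling~(a) with~(c). I would follow Ruette's dichotomy~\cite{Ruette}. If some edge $e_0$ of $\mathfs G$ appears in only finitely many first-return loops at $a$, discard from $\Omega$ those finitely many loops; this leaves the limsup~(a) unchanged, achieves~(c), and a further negligible pruning restores~(b). Otherwise every edge of $\mathfs G$ lies in infinitely many first-return loops at $a$, and properness must come from deleting a whole vertex $b_0\neq a$. The heart of the proof, and its \emph{main obstacle}, is to show that the non-SPR hypothesis forces the existence of such a $b_0$ whose removal still leaves a collection of first-return loops at $a$ with weighted growth rate $P$ --- that is, the rate $P$ cannot be simultaneously ``essential'' for every vertex. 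Ruette establishes this for $\phi\equiv 0$ via a combinatorial argument exploiting the fact that when $Z_n^*$ achieves the full exponential rate $P$ the loop family is too rich to be tied to any particular finite set of intermediate vertices. The adaptation to weighted potentials is expected to be routine once all counts are replaced by the $w_j$-weighted sums of Step~2, because summable variations absorb every $\phi$-dependent correction into the uniform factor $e^{\pm B}$.
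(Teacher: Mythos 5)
Your Step~1 reduction is exactly right and matches the paper: by Lemma~\ref{l.p-star}, ``not SPR'' means $P_G^*(\phi,a)=P_G(\phi)$, and once one produces a strongly connected, aperiodic, \emph{proper} subgraph $\mathfs G'$ containing $a$ on which the first-return partition function $Z_n^*$ still grows at rate $P_G(\phi)$, the chain $P_G(\phi)\leq P_G^*(\phi|_{\mathfs G'},a)\leq P_G(\phi|_{\mathfs G'})\leq P_G(\phi)$ collapses and delivers all three conclusions at once. Step~2's observation that summable variations let you work with the supremum weights $w_j$ up to a uniform factor $e^{\pm B}$ is also correct and harmless.

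\textbf{The gap is in Step~3}, which you yourself flag as the ``main obstacle'' and leave unresolved. Your proposed dichotomy --- either some edge lies in finitely many first-return loops at $a$ (then discard those loops), or every edge lies in infinitely many (then ``properness must come from deleting a whole vertex $b_0$'') --- is not the argument, and the second horn is exactly where the work lives; asserting that Ruette handles it and that ``the adaptation to weighted potentials is expected to be routine'' is not a proof. Moreover, even in the first horn your claim that ``a further negligible pruning restores (b)'' (aperiodicity) is not justified: nothing in your setup guarantees that the loop lengths surviving the pruning still have gcd~$1$.

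The paper resolves both issues simultaneously with a single, different combinatorial device. First fix a finite subgraph $\mathfs G_{\mathrm{core}}$ (the union of two loops of coprime lengths); any strongly connected subgraph containing $\mathfs G_{\mathrm{core}}$ is automatically topologically mixing, which disposes of your aperiodicity worry once and for all. Then, using a geodesic/excursion argument, one locates a \emph{branching vertex} $a$ with at least two outgoing edges into $S_{\mathrm{core}}^c$. The key structural observation is that a first-return loop at $a$ visits $a$ only at its endpoints, so it uses exactly one outgoing edge from $a$: partitioning the out-edges of $a$ to $S_{\mathrm{core}}^c$ into two nonempty classes $E_0,E_1$ therefore splits $Z_n^*(\phi,a)$ additively as $Z_n^*(E_0)+Z_n^*(E_1)+Z_n^*(E_{\mathrm{core}})$, and each $Z_n^*(E_i)+Z_n^*(E_{\mathrm{core}})$ equals $Z_n^*$ computed in the strongly connected component $\mathfs G_i$ of $a$ after deleting $E_{1-i}$. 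One of $\mathfs G_0,\mathfs G_1$ then retains the full exponential rate $P_G^*(\phi,a)=P_G(\phi)$; both are proper (because each omits at least one genuine out-edge of $a$) and both contain $\mathfs G_{\mathrm{core}}$. That $\mathfs G_{\mathrm{core}}$-plus-branching-vertex construction is precisely what is missing from your proposal, and without it neither properness nor mixing of the resulting subgraph is secured.
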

\begin{proof}
The case $\phi\equiv 0$ is done in \cite{Ruette}.

If $\phi$ {is not SPR,} then $\mathfs G$ must have infinitely many vertices, otherwise all bounded H\"older continuous potentials are SPR, see \cite{Sarig-CMP-2001}.

There is no loss of generality in assuming that $\mathfs G$ has at most one edge $a\to b$ for every ordered pair $(a,b)\in S\times S$, and that every vertex $a$ has an outgoing edge $a\to b$ in $E$. Otherwise, pass to the graph $\mathfs G^\ast$ with set of vertices $S^\ast:=\{a\in S:[a]\neq \emptyset\}$ and set of  edges $\{(a,b)\in E:[a,b]\neq \emptyset\}$, then $\Sigma^+(\mathfs G^\ast)=\Sigma^+(\mathfs G)$ and $\mathfs G^\ast$ has the required properties.

 It follows that if $\mathfs G'$ is a proper subgraph of $\mathfs G$, then $\Sigma^+(\mathfs G')\subsetneq\Sigma^+(\mathfs G)$.

By our standing assumptions,  $\Sigma^+(\mathfs G)$ is topologically mixing. Therefore
there exist two closed loops, starting and ending at the same vertex, with co-prime lengths.
	The union of these loops defines a finite  subgraph $\mathfs G_{core}$ of $\mathfs G$, and every strongly connected graph which contains $\mathfs G_{core}$  defines a topologically mixing TMS.

\medskip
We will obtain the graph $\mathfs G'$ from $\mathfs G$, by removing some  edge $a\to b$ outside  $\mathfs G_{\text{core}}$. We  use the following   {\em edge removal procedure} from \cite{Ruette}.

	Let $S_{\text{core}}$ denote the set of  vertices of $\mathfs G_{\text{core}}$.
	Let $\mathcal{E}$ denote the collection of finite paths $\gamma$ with the following properties:
\begin{enumerate}[(1)]
\item the first vertex and the last vertex of $\gamma$ belong to $S_{\text{core}}$;
\item all other  vertices are outside $S_{\text{core}}$;
\item  for every vertex $x_i\in S\setminus S_{\text{core}}$ of $\gamma=(x_0,\dots,x_{n-1})$ either $(x_0,\dots,x_i)$ or $(x_i,\dots,x_{n-1})$ is a $\mathfs G$--geodesic.
\end{enumerate}
Since $\sigma$ is topologically transitive,  $\mathfs G$ is strongly connected. Every vertex in $S\setminus S_{\text{core}}$ can be connected by  forward and backward $\mathfs G$--geodesics to $S_{\text{core}}$. Their union is a path in $\mathcal E$. So every vertex in $S_{\text{core}}^c$ belongs to some path in $\mathcal E$, and $\mathcal E$ is infinite.

All paths in $\mathcal E$ begin at $S_{\text{core}}$. Since $S_{\text{core}}$ is finite and $\mathcal E$ is infinite,	there exist two different paths $\gamma_0,\gamma_1 \in \mathcal{E}$ which begin at the same vertex in $S_{\text{core}}$ and end at the same vertex in $S_{\text{core}}$. Let $\gamma$ denote the maximal common prefix of $\gamma_0,\gamma_1$, and let $a$ denote the last vertex in $\gamma$.

By construction, $a$ has at least two different outgoing edges $a\to b_i$ $(i=0,1)$ such that $b_i\not\in S_{\text{core}}$.
	Divide the outgoing edges from $a$ to $S_{core}^c$ into two arbitrary subsets,
	$E_0$ and $E_1$, such that $(a,b_i)\in E_i$.

	Let $\mathfs G_i$ be the graph obtained by removing the edges $E_{1-i}$ from $\mathfs G$ and restricting to the strongly connected component containing $a$.
It is not difficult to see that $\mathfs G_i$ contain $\mathfs G_{\text{core}}$, so  $\sigma:\Sigma^+(\mathfs G_i)\to\Sigma^+(\mathfs G_i)$ is topologically mixing for $i=1,2$. By the assumption on $\mathfs G$, $\Sigma^+(\mathfs G_i)\subsetneq\Sigma^+(\mathfs G)$.

If $x_0=a,x_1,\dots,x_{n-1},x_n=a$ is a first return loop to $a$, then $x_i\neq a$ for $1\leq i\leq n-1$, therefore edges from $E_i$ can only appear as the first edge $(x_0,x_1)$.

We let $E_{\text{core}}=E_{\text{core}}(a)$ be the edges $(a,b) \in \mathfs G$ such that $a,b\in S_{core}$ (in particular,  $E_{\text{core}}=\emptyset$ whenever $a\not\in S_{core}$).
$Z^*_n(\phi, a)$ splits into the sums,
	\begin{equation}\label{e.Z-splitting}
		Z^*_n(\phi, a)=Z_n^{*}(E_0) + Z_n^*(E_1) + Z_n^*( E_{\text{core}})
	\end{equation}
	where
	\[
	Z_n^{*}(E_i):=\sum_{\substack{\sigma^n(\un{x})=\un{x} \\ (x_0,x_1)\in E_i}} e^{\phi_n(\un{x})} 1_{[\tau_a=n]}(\un{x})\ \ \ (\text{$\tau_a$ is defined in Section~\ref{s.eq}})
	\]
	and
	\[
	Z_n^{*}(E_{\text{core}}):=\sum_{\substack{\sigma^n(\un{x})=\un{x} \\ (x_0,x_1)\in E_\text{core}}} e^{\phi_n(\un{x})} 1_{[\tau_a=n]}(\un{x}).
	\]

We claim that
\begin{equation}\label{eq.id}
Z_n^{*}(\phi|_{\Sigma^+(\mathfs G_i)},a)=Z_n^{*}(E_i)+Z_n^{*}(E_\text{core}).\end{equation}
The non-trivial inequality is ($\geq$). The first return loops at $a$ which begin with an edge in $E_i$ or $E_{\text{core}}$ do not contain other edges in $E_{1-i}$, because $a$ cannot appear in the middle of a first return loop at $a$. Therefore all such loops must be contained in the irreducible component of $a$ in $\mathfs G\setminus E_{1-i}$, whence in $\mathfs G_i$. So the loops participating in the sums in the right-hand-side must also appear in the sum on the left-hand-side. As the sums on the right are over disjoint sets,  $(\geq)$ follows.
%
%
%

Recall that $P^*_G(\phi,a)=\limsup \frac{1}{n}\log Z^*_n(\phi,a)$. By  \eqref{e.Z-splitting} and \eqref{eq.id},
\begin{align*}
&P^*_G(\phi,a)\leq \limsup \frac{1}{n}\log \bigl(2\max_i(Z_n^{*}(\phi|_{\Sigma^+(\mathfs G_i)},a)\bigr)\\
&=\limsup \frac{1}{n}\log \bigl(\max_i(Z_n^{*}(\phi|_{\Sigma^+(\mathfs G_i)},a)\bigr)
\leq \max_i P^*_G(\phi|_{\Sigma^+(\mathfs G_i)},a)\leq P^*_G(\phi,a).
\end{align*}
So  $P^*_G(\phi,a)=P^*_G(\phi|_{\mathfs G_i},a)$ for at least one of the two indices $i=0,1$.
Call this index $i_0$.  Then
$
P_G(\phi)\overset{(1)}{=}P_G^\ast(\phi,a)=P_G^\ast(\phi|_{\Sigma^+(\mathfs G_{i_0})},a)\overset{(2)}{\leq}
P_G(\phi|_{\Sigma^+(\mathfs G_{i_0})})\overset{(3)}{\leq} P_G(\phi)
$:
$\overset{(1)}{=}$ follows from Lemma \ref{l.p-star} and the assumption that  $\phi$ is not SPR;  and
$\overset{(2)}{\leq}$ and $\overset{(3)}{\leq}$ are due to the trivial inequalities $Z_n^\ast(\phi|_{\Sigma^+(\mathfs G_{i_0})},a)\leq
Z_n(\phi|_{\Sigma^+(\mathfs G_{i_0})},a)\leq Z_n(\phi,a)$.
So  $P_G(\phi)=P_G(\phi|_{\Sigma^+(\mathfs G_{i_0})})$ as required.

 The previous argument also shows that
$P_G^\ast(\phi|_{\Sigma^+(\mathfs G_{i_0})},a)=
P_G(\phi|_{\Sigma^+(\mathfs G_{i_0})})$. By Lemma \ref{l.p-star}, $\phi|_{\Sigma^+(\mathfs G_{i_0})}$ is not SPR.
\end{proof}

%

\begin{corollary}\label{cor.EKP-SPR}
Suppose $\sigma:\Sigma^+\to\Sigma^+$ is a topologically {transitive} TMS, and $\phi:\Sigma^+\to\R$ is a potential with summable variations, finite Gurevich pressure, and an equilibrium measure $\mu_\phi$.
\begin{enumerate}[(1)]
\item Assume that for every sequence of invariant probability measures $\mu_n$,
if $P_{\mu_n}(\phi)\to P_G(\phi)$, then $\mu_n[\un{a}]\to\mu_\phi[\un{a}]$ for all cylinders $[\un{a}]$.
Then $\phi$ must be  SPR.
\medskip
\item If $\phi$ satisfies the EKP inequality \eqref{EKP-ineq-pressure} for some $\beta>0$ and all $\psi\in\mathcal H_\beta$, then $\phi$ must be  SPR.
\end{enumerate}
\end{corollary}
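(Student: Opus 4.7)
The plan is to deduce (2) from (1) by applying the EKP inequality to cylinder indicators, and to establish (1) by contraposition, using Theorem~\ref{prop:ruette_generalized} to build a sequence of invariant measures whose pressures approach $P_G(\phi)$ but whose cylinder masses do not converge to those of $\mu_\phi$.

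For the reduction (2)$\Rightarrow$(1): the indicator $1_{[\un a]}$ of any cylinder $[\un a]=[a_0,\ldots,a_{n-1}]$ is locally constant at scale $e^{-n+1}$ in the metric \eqref{eq.metric}, so $1_{[\un a]}\in\mathcal H_\beta$ with norm bounded by $1+e^{\beta(n-1)}$. Substituting $\psi=1_{[\un a]}$ into \eqref{EKP-ineq-pressure} gives
\[
|\mu[\un a]-\mu_\phi[\un a]|\leq C_{\phi,\beta}(1+e^{\beta(n-1)})\sqrt{P_G(\phi)-P_\mu(\phi)},
\]
so any sequence $\mu_n$ with $P_{\mu_n}(\phi)\to P_G(\phi)$ satisfies $\mu_n[\un a]\to\mu_\phi[\un a]$ on every cylinder, which is precisely the hypothesis of (1).

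For part (1), suppose $\phi$ is not SPR. By the definition of SPR in \S\ref{s.SPR-def}, $\phi_p|_{\Sigma_0^+}$ is not SPR on the topologically mixing component $(\Sigma_0^+,\sigma^p)$ of the spectral decomposition $\Sigma^+=\Sigma_0^+\uplus\cdots\uplus\Sigma_{p-1}^+$. Theorem~\ref{prop:ruette_generalized} then furnishes a proper mixing subshift $\Sigma'\subsetneq\Sigma_0^+$ with $P_G(\phi_p|_{\Sigma'})=P_G(\phi_p|_{\Sigma_0^+})=pP_G(\phi)$. I will pick $\sigma^p$-invariant probabilities $\mu_n'$ on $\Sigma'$ with $P_{\mu_n'}(\phi_p)\to pP_G(\phi)$ using the variational principle \eqref{eq:measure_pressure}, and symmetrize to $\mu_n:=\tfrac{1}{p}\sum_{i=0}^{p-1}\mu_n'\circ\sigma^{-i}\in\mathfs M(\Sigma^+)$. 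The scaling identities $h_{\mu_n}(\sigma)=\tfrac{1}{p}h_{\mu_n'}(\sigma^p)$ and $\int\phi\,d\mu_n=\tfrac{1}{p}\int\phi_p\,d\mu_n'$ (already used in the proof of Theorem~\ref{t.optimal-EKP}) yield $P_{\mu_n}(\phi)\to P_G(\phi)$.

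To finish, note that the support of $\mu_n$ lies in $\bigcup_{i=0}^{p-1}\sigma^i(\Sigma')$, and meets $\Sigma_0^+$ exactly in $\Sigma'$. Since $\Sigma'\subsetneq\Sigma_0^+$ is obtained by deleting an edge in the graph of $\Sigma_0^+$ (whose states are the $p$-words $[a_0,\ldots,a_{p-1}]$ with $a_0\in S_0$), the corresponding forbidden transition unpacks in the original alphabet to a nonempty cylinder $[\un c]\subset\Sigma_0^+\setminus\Sigma'$ in $\Sigma^+$. Then $\mu_n[\un c]=0$ for every $n$, whereas the conditional of $\mu_\phi$ on $\Sigma_0^+$ is the RPF measure of $\phi_p|_{\Sigma_0^+}$, which charges every nonempty cylinder positively, so $\mu_\phi[\un c]>0$. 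This contradicts the hypothesis of (1), completing the proof. The main bookkeeping obstacle is the passage between $(\Sigma_0^+,\sigma^p,\phi_p)$ and $(\Sigma^+,\sigma,\phi)$ (SPR, pressures, supports, and the identification of the forbidden cylinder); in the mixing case $p=1$ the argument reduces to a single application of Theorem~\ref{prop:ruette_generalized} plus the variational principle.
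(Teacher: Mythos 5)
Your proposal is correct and takes essentially the same approach as the paper: deduce (2) from (1) by plugging cylinder indicators into the EKP inequality, then prove (1) by contradiction via Theorem~\ref{prop:ruette_generalized}, the variational principle on the proper subshift, and full support of the RPF/equilibrium measure. The only difference is cosmetic --- the paper delegates the period-$p$ bookkeeping to the reduction already carried out in the proof of Theorem~\ref{t.optimal-EKP}, whereas you unfold the spectral decomposition explicitly (with the resulting cylinder being a length-$2p$ word over the original alphabet); note also that the header ``reduction (2)$\Rightarrow$(1)'' should read ``(1)$\Rightarrow$(2),'' but the content there is right.
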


\begin{proof} Clearly, (1)$\Rightarrow$(2). Therefore, it is enough to prove (1).

 It is enough to consider the topologically mixing case, because if $\Sigma^+$ has period $p$, and $\Sigma^+=\biguplus_{i=0}^{p-1}\Sigma^+_i$ is the spectral decomposition from \S\ref{s.spectral-decomposition}, then $\sigma$ satisfies
\begin{equation}\label{condition}
P_{\mu_n}(\phi)\to P_G(\phi)\Rightarrow \mu_n[\un{a}]\to\mu_\phi[\un{a}]
\end{equation}
if and only if  the topologically {\em mixing} $\sigma^p:\Sigma_0^+\to\Sigma_0^+$ satisfies \eqref{condition}. See the proof of Theorem \ref{t.optimal-EKP}.

Let $\mathfs G$ denote the graph associated to $\Sigma^+$. Assume without loss of generality that $\mathfs G$ has at most one edge $a\to b$ for every ordered pair of vertices $(a,b)$.

By assumption, $\phi$ has an equilibrium measure, whence by \cite{Buzzi-Sarig}, $\phi$ is positively recurrent. Suppose by way of contradiction that $\phi$ is not SPR. By the previous theorem, there is a proper subgraph $\mathfs G'\subset\mathfs G$ such that $\sigma:\Sigma^+(\mathfs G')\to \Sigma^+(\mathfs G')$ is topologically mixing, and
$$
P_G(\phi|_{\Sigma^+(\mathfs G')})=P_G(\phi).
$$
By the variational principle, there are invariant probability measures $\mu_n$ on $\Sigma^+(\mathfs G')$ such that
$
P_{\mu_n}(\phi|_{\Sigma^+(\mathfs G')})\xrightarrow[n\to\infty]{}P_G(\phi|_{\Sigma^+(\mathfs G')})
$.
 Since $P_{\mu_n}(\phi|_{\Sigma^+(\mathfs G')})=P_{\mu_n}(\phi)$ and $
P_G(\phi|_{\Sigma^+(\mathfs G')})=P_G(\phi)
$,
$P_{\mu_n}(\phi) \xrightarrow[n\to\infty]{}P_G(\phi)$.

By \eqref{condition},
$
|\mu_n[\un{a}]-\mu_\phi[\un{a}]|\xrightarrow[n\to\infty]{}0
$
for all cylinders $[\un{a}]$ in $\Sigma^+$.
But this is false for any cylinder of the form $[a,b]$ where $a\to b$ is an edge which appears in $\mathfs G$ but not in $\mathfs G'$: For this cylinder $\mu_n[a,b]=0$ for all $n$, because $\mu_n$ are supported in $\Sigma^+(\mathfs G')$. But $\mu_\phi[a,b]\neq 0$ because equilibrium measures of potentials with summable variations on topologically mixing TMS always have full support, see \cite{Buzzi-Sarig}.
\end{proof}

The following generalizes a result of Salama \cite[Theorem 2.3]{salama1988topological}, whose proof has been corrected in \cite[Theorem 2.7]{Ruette} and in  \cite{fiebig1996symbolic}.
The case of  Markovian potentials ($\phi$ such that $\var_2(\phi)=0$) was done in \cite[Theorem 3.15]{Gurevich-Savchenko}.
\begin{corollary}\label{cor:salama}
Let $\sigma:\Sigma^+\to\Sigma^+$ be a topologically mixing TMS with finite Gurevich entropy.
Suppose $\phi:\Sigma^+\to\R$ is bounded from above, has summable variations, and has finite Gurevich pressure.
Then $\phi$ is SPR if and only if $P_G(\phi|_{\Sigma^+(\mathfs G')})<P_G(\phi|_{\Sigma^+(\mathfs G)})$ for every subgraph $\mathfs G'$ of $\mathfs G$ for which $\sigma:\Sigma^+(\mathfs G')\to \Sigma^+(\mathfs G')$ is topologically mixing, and $\Sigma^+(\mathfs G')\subsetneq\Sigma^+(\mathfs G)$.
\end{corollary}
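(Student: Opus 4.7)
My plan is to leverage the uniqueness of the equilibrium measure together with its full-support property. The direction $(\Leftarrow)$ I will dispatch immediately as the contrapositive of Theorem~\ref{prop:ruette_generalized}: a non-SPR potential on $\mathfs G$ admits a proper mixing subgraph $\mathfs G'\subsetneq\mathfs G$ with $P_G(\phi|_{\Sigma^+(\mathfs G')})=P_G(\phi)$, so the strict inequality assumed in the corollary would be violated. The substantive direction is $(\Rightarrow)$: assuming $\phi$ is SPR on $\mathfs G$, I must show that every proper topologically mixing subgraph $\mathfs G'$ with $\Sigma^+(\mathfs G')\subsetneq\Sigma^+(\mathfs G)$ satisfies $P_G(\phi|_{\Sigma^+(\mathfs G')})<P_G(\phi)$.

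Fixing such $\mathfs G'$, I will argue by contradiction, supposing $P_G(\phi|_{\Sigma^+(\mathfs G')})=P_G(\phi)=:h$. First I pick any state $v$ belonging to $\mathfs G'$. Since every first-return loop at $v$ in $\mathfs G'$ is also a first-return loop at $v$ in $\mathfs G$, one has $Z_n^*(\phi|_{\Sigma^+(\mathfs G')},v)\leq Z_n^*(\phi|_{\Sigma^+(\mathfs G)},v)$ for every $n$. Combining this with Lemma~\ref{l.p-star} applied to $\phi$ on $\mathfs G$ yields
\[ P_G^*(\phi|_{\Sigma^+(\mathfs G')},v)\;\leq\;P_G^*(\phi|_{\Sigma^+(\mathfs G)},v)\;<\;h\;=\;P_G(\phi|_{\Sigma^+(\mathfs G')}). \]
The reverse direction of Lemma~\ref{l.p-star} then identifies $\phi|_{\Sigma^+(\mathfs G')}$ as SPR on $\mathfs G'$.

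Next I use that both $\phi$ on $\mathfs G$ and $\phi|_{\Sigma^+(\mathfs G')}$ on $\mathfs G'$ are SPR, hence positively recurrent, and that both satisfy the remaining hypotheses of Corollary~\ref{cor.PR-eq-measure} (summable variations, $\sup\phi<\infty$, finite Gurevich entropy and pressure). Therefore each admits a unique equilibrium measure, which I call $m_{\mathfs G}$ and $m_{\mathfs G'}$. Extending $m_{\mathfs G'}$ by zero to a shift-invariant probability on $\Sigma^+(\mathfs G)$ preserves both its metric entropy and $\int\phi\,dm_{\mathfs G'}$, so $P_{m_{\mathfs G'}}(\phi)=h=P_G(\phi)$, making $m_{\mathfs G'}$ also an equilibrium measure of $\phi$ on $\Sigma^+(\mathfs G)$. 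The uniqueness clause of Corollary~\ref{cor.PR-eq-measure} forces $m_{\mathfs G'}=m_{\mathfs G}$. Since $\Sigma^+(\mathfs G')\subsetneq\Sigma^+(\mathfs G)$, some $2$-cylinder $[a,b]$ of $\Sigma^+(\mathfs G)$ is disjoint from $\Sigma^+(\mathfs G')$, so $m_{\mathfs G'}[a,b]=0$, whereas $m_{\mathfs G}[a,b]>0$ by the full-support property of equilibrium measures of summable-variation potentials on topologically mixing TMS (\cite{Buzzi-Sarig}, as already recalled at the end of the proof of Corollary~\ref{cor.EKP-SPR}). This contradiction completes the argument.

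The one non-trivial step I anticipate is the transfer of the SPR property from $\mathfs G$ to the subgraph $\mathfs G'$ under the equal-pressure hypothesis; this is handled by two applications of Lemma~\ref{l.p-star} bracketing the trivial comparison $Z_n^*(\phi|_{\mathfs G'},v)\leq Z_n^*(\phi|_{\mathfs G},v)$. Everything else is a clean consequence of the uniqueness of equilibrium measures in the positively recurrent regime (Corollary~\ref{cor.PR-eq-measure}) combined with the full-support property from \cite{Buzzi-Sarig}, and requires no genuinely new input.
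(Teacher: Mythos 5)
Your proof is correct and takes essentially the same route as the paper: both directions use the contrapositive of Theorem~\ref{prop:ruette_generalized} for $(\Leftarrow)$, and for $(\Rightarrow)$ both bracket $Z_n^*(\phi|_{\Sigma^+(\mathfs G')},v)\le Z_n^*(\phi,v)$ with Lemma~\ref{l.p-star} to transfer SPR down to $\mathfs G'$, then derive a contradiction by pushing the equilibrium measure of $\phi|_{\Sigma^+(\mathfs G')}$ forward to $\Sigma^+(\mathfs G)$ and invoking uniqueness together with the full-support property of equilibrium (RPF) measures. The only cosmetic difference is that you introduce $m_{\mathfs G}$ explicitly and identify it with the extended $m_{\mathfs G'}$, whereas the paper concludes directly that the pushed-forward measure must be fully supported; the mathematical content is identical.
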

\noindent
{\em Remark.\/} The assumption that the Gurevich entropy is finite  is only needed for the \textit{if} direction.
\begin{proof}
Theorem~\ref{prop:ruette_generalized} implies the $\Leftarrow$ implication by contraposition:
If $\phi$ were not SPR, then  Theorem~\ref{prop:ruette_generalized} would have provided a proper subgraph $\mathfs G'$ such that $\sigma:\Sigma^+(\mathfs G')\to \Sigma^+(\mathfs G')$ is topologically mixing, and so that $P_G(\phi|_{\Sigma^+(\mathfs G')})=P_G(\phi)$.

For the other direction, assume by  contradiction that $\phi$ is SPR, but that there is a proper subgraph $\mathfs G'$ such that $\sigma:\Sigma^+(\mathfs G')\to \Sigma^+(\mathfs G')$ is topologically mixing and
$P_G(\phi|_{\Sigma^+(\mathfs G')})=P_G(\phi)$.

Fix some vertex $a$ of $\mathfs G'$. By Lemma \ref{l.p-star}, $P_G^\ast(\phi,a)<P_G(\phi)$, whence
$$
P_G^\ast(\phi|_{\Sigma^+(\mathfs G')},a)\leq P_G^\ast(\phi,a)<P_G(\phi)=P_G(\phi|_{\Sigma^+(\mathfs G')}),
$$
whence  $\phi|_{\Sigma^+(\mathfs G')}$ is SPR.
In particular, $\phi|_{\Sigma^+(\mathfs G')}$ is
 positively recurrent.  Let $m$ denote the  RPF measure of $\phi|_{\Sigma^+(\mathfs G')}$. Since $\Sigma^+$ has finite Gurevich entropy, $m$ has finite entropy. By Theorem \ref{t.eq-measure}, $m$  is  an equilibrium measure of $\phi|_{\Sigma^+(\mathfs G')}$.

Now let $m'$ denote the shift invariant probability measure
on $\Sigma^+(\mathfs G)$, given by $m'(E):=m(E\cap\Sigma^+(\mathfs G'))$. Clearly
\[P_{m'}(\phi)=P_{m}(\phi|_{\Sigma^+(\mathfs G')})=P_G(\phi|_{\Sigma^+(\mathfs G')})=P_G(\phi).\]
So $m'$ defines an equilibrium measure on $\Sigma^+(\mathfs G)$. By Theorem~\ref{t.eq-measure}, this is the unique equilibrium measure. But now we have a contradiction, because $\mathrm{supp}\, m'=\Sigma^+(\mathfs G')\subsetneq \Sigma^+(\mathfs G)$, whereas the equilibrium measure of a potentials with summable variations on $\Sigma(\mathfs G)$  are globally supported, because they are RPF measures.
\end{proof}

The scenario when the EKP inequality \eqref{EKP-ineq-pressure} holds, can also be characterized in terms of the following object,
called the {\em pressure at infinity} of $\phi$:
$$
P_\infty(\phi):=\sup\left\{\limsup_{n\to\infty}P_{\mu_n}(\phi)\right\}
\begin{array}{l}
\text{where the supremum is over all sequences of $\mu_n$ in}\\
\text{$\mathcal M_\phi(\Sigma^+)$ such that $\mu_n[\un{a}]\to 0\text{ for all cylinders}$}.
\end{array}
$$
$P_\infty(0)$  is called the {\em entropy at infinity}, see \cite{Buzzi-Ruette}, \cite{iommi2019escape}; For the pressure at infinity in a different setup, see \cite{gouezel2020pressure}.
\begin{theorem}
  Let $\Sigma^+$ be a  topologically transitive countable Markov shift which
has finite Gurevich entropy.
Let $\phi$ be a $\theta$-weakly H\"older continuous potential such that $\sup\phi<\infty$.
The following are equivalent:
\begin{enumerate}[(1)]
\item $\phi$ is strongly positively recurrent;
\item $\phi$ satisfies the EKP inequality \eqref{EKP-ineq-pressure} for all $\psi\in\mathcal H_\beta$, with $e^{-\beta}\leq \theta$;
\item $P_\infty(\phi)<P_G(\phi)$.
\end{enumerate}
\end{theorem}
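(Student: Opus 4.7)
The plan is to prove $(1)\Rightarrow(2)\Rightarrow(3)\Rightarrow(1)$. After the standard reduction from topologically transitive to topologically mixing via the spectral decomposition (as in the proof of Theorem~\ref{t.optimal-EKP}), it suffices to treat the topologically mixing case, since each of (1), (2), (3) is invariant under passing from $(\sigma,\phi,\Sigma^+)$ to $(\sigma^p|_{\Sigma_0^+},\phi_p|_{\Sigma_0^+},\Sigma_0^+)$.

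The equivalence $(1)\Leftrightarrow(2)$ is immediate from earlier results: $(1)\Rightarrow(2)$ is Theorem~\ref{t.suboptimal-EKP}, and $(2)\Rightarrow(1)$ is Corollary~\ref{cor.EKP-SPR}(2). For $(2)\Rightarrow(3)$ I would argue by contraposition. Suppose $P_\infty(\phi)=P_G(\phi)$ and pick $\mu_n\in\mathfs M_\phi(\Sigma^+)$ with $\mu_n[\un{b}]\to 0$ for every cylinder $[\un{b}]$ and $P_{\mu_n}(\phi)\to P_G(\phi)$. For any non-empty cylinder $[\un{a}]$ of length $k$, the indicator $\psi:=1_{[\un{a}]}$ lies in $\mathcal{H}_\beta$ with $\|\psi\|_\beta\le 1+e^{\beta(k-1)}$, so the EKP inequality~\eqref{EKP-ineq-pressure} applied to $\mu_n$ and $\psi$ gives $|\mu_n[\un{a}]-m[\un{a}]|\le C_{\theta,\beta}(\phi)\|\psi\|_\beta\sqrt{P_G(\phi)-P_{\mu_n}(\phi)}\to 0$. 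Together with $\mu_n[\un{a}]\to 0$ this forces $m[\un{a}]=0$; since $[\un{a}]$ was an arbitrary non-empty cylinder this contradicts the full support of the RPF measure $m$ (\cite{Buzzi-Sarig}).

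The content lies in $(3)\Rightarrow(1)$, which I would prove by the contrapositive: assuming $\phi$ is not SPR, I construct a sequence $\mu_n\in\mathfs M(\Sigma^+)$ with $\mu_n[\un{a}]\to 0$ for every cylinder and $P_{\mu_n}(\phi)\to P_G(\phi)$. The engine is an iterative application of Theorem~\ref{prop:ruette_generalized}: starting from $\mathfs G_0:=\mathfs G$ and applying the theorem at each step to $\phi|_{\Sigma^+(\mathfs G_{n-1})}$, which remains non-SPR by the theorem's own conclusion, I build a strictly decreasing chain $\mathfs G_0\supsetneq\mathfs G_1\supsetneq\mathfs G_2\supsetneq\cdots$ of topologically mixing subgraphs with $P_G(\phi|_{\Sigma^+(\mathfs G_n)})=P_G(\phi)$ for every $n$. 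On each $\Sigma^+(\mathfs G_n)$ the variational principle furnishes $\mu_n\in\mathfs M(\Sigma^+(\mathfs G_n))$ with $P_{\mu_n}(\phi)>P_G(\phi)-1/n$.

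The main obstacle is arranging the iteration so that the measures $\mu_n$ escape to infinity, i.e., so that every cylinder $[\un{a}]\subset\Sigma^+$ is eventually disjoint from $\Sigma^+(\mathfs G_n)$. I would achieve this by enumerating the non-empty cylinders of $\Sigma^+$ as $[\un{c}_1],[\un{c}_2],\ldots$ and, at step $n$, exploiting the flexibility in the proof of Theorem~\ref{prop:ruette_generalized} — the freedom in choosing the finite core $\mathfs G_{core}$, the branching vertex $a$, and the bipartition $E_0\sqcup E_1$ of outgoing edges from $a$ — to force the removed edge to lie on the cylinder $[\un{c}_n]$. The required graph-theoretic input is that in a topologically mixing infinite graph on which $\phi$ is not SPR, for every prescribed edge $e$ one can find a core $\mathfs G_{core}$ avoiding $e$ and a branching configuration in which $e$ is the removed edge; this is the technically delicate point I expect to demand the most care, and it rests on the fact that topological mixing, combined with the abundance of first-return loops implied by the failure of SPR (Lemma~\ref{l.p-star}), produces enough coprime-length loops avoiding any prescribed finite edge set to rebuild the core. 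Once this is established the diagonal construction guarantees $\mu_n[\un{c}_k]=0$ for all $n\ge k$, so $\mu_n\to 0$ in the vague topology, and consequently $P_\infty(\phi)\ge P_G(\phi)$, negating $(3)$.
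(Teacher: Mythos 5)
The outer architecture $(1)\Rightarrow(2)\Rightarrow(3)\Rightarrow(1)$ is the same as the paper's, and your handling of $(1)\Rightarrow(2)$, $(2)\Rightarrow(1)$, and $(2)\Rightarrow(3)$ is correct and matches the paper (the paper simply calls $(2)\Rightarrow(3)$ ``trivial''; your contradiction argument via indicators of cylinders and full support of the RPF measure is what makes it trivial).

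The gap is in your construction for $(3)\Rightarrow(1)$ (equivalently, the paper's Lemma~\ref{c.SPR-and-escape-to-infty}). You want to enumerate cylinders $[\un{c}_1],[\un{c}_2],\ldots$ and, at step $n$, delete an edge lying on $[\un{c}_n]$, so that $\mu_n$ supported on $\Sigma^+(\mathfs G_n)$ gives measure zero to $[\un c_1],\ldots,[\un c_n]$. The ``required graph-theoretic input'' you flag as delicate is in fact false: the edge-removal procedure of Theorem~\ref{prop:ruette_generalized} only removes edges $a\to b$ with $b\notin S_{\text{core}}$, where $a$ is a branching vertex obtained from the pigeonhole argument on $\mathcal E$; and even then, only one of the two candidate removals $E_0,E_1$ is guaranteed to preserve fullness, and you do not get to choose which. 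More fundamentally, any core $\mathfs G_{\text{core}}$ (two coprime-length loops through a common vertex, needed for topological mixing) is preserved under every removal, so any cylinder whose word lies entirely inside the core will persist in all $\mathfs G_n$, always carrying positive measure under the RPF measure of $\phi|_{\Sigma^+(\mathfs G_n)}$. Hence your diagonal scheme cannot make every cylinder's measure go to $0$.

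The paper's Lemma~\ref{c.SPR-and-escape-to-infty} circumvents this by \emph{not} trying to kill cylinders. Instead, fixing a vertex $a=a_1$, it proves by induction that one can pass from any full subgraph $\mathfs H$ to a full subgraph $\mathfs G_N\subset\mathfs H$ with no loops at $a$ of length $<N$; it then iterates over $a_1,\ldots,a_N$ to produce a full subgraph $\mathfs H_N$ in which each $a_i$ $(i\le N)$ either is absent or has all its return loops of length $\ge N$. The ergodic theorem then forces $\mu[a_i]\le 1/N$ for every invariant $\mu$ on $\Sigma^+(\mathfs H_N)$, and the variational principle supplies measures with $P_\mu(\phi)$ close to $P_G(\phi)$. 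This is the essential idea you are missing: bound the \emph{frequency} of each state via long return times, rather than removing the state (or a cylinder through it) from the graph, which is impossible for core states.
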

\noindent
(1)$\Rightarrow$(2) is Theorem
\ref{t.suboptimal-EKP}; (2)$\Rightarrow$(3) is trivial; and (3)$\Rightarrow$(1) uses the following lemma,  due to Ruette  in the special case $\phi\equiv 0$  \cite[Prop 3.4.2]{ruette2001chaos}, \cite[Cor. 3.4]{Ruette}.

The following lemma holds under a general condition,  introduced in \cite{iommi2021space}, and called the {\em $\mathcal{F}$-property}. A TMS $\Sigma^+$ has the $\mathcal{F}$-property, if for each state $a$ and every positive integer $n$, $Z_N(0,a)<\infty$. Every locally compact TMS, and every topologically transitive TMS with finite Gurevich entropy has the $\mathcal{F}$-property:
In the locally compact case,
$Z_N(0,a)<\infty$ because the outgoing degree of every vertex is finite,
and in the finite entropy case, $Z_N(0,a)<\infty$ by  \eqref{Z-n-finite}.

In \cite[Lemma 4.16]{iommi2021space} it is proven that any (non-compact) countable TMS with the $\mathcal{F}$-property has probability measures that escape to infinity, in the sense that the mass of every cylinder tends to zero. The next lemma says that  if the SPR property fails, these measures can be chosen to be asymptotically equilibrium measures.

\begin{lemma}\label{c.SPR-and-escape-to-infty}
Suppose $\Sigma^+$ is a topologically transitive countable Markov shift with the $\mathcal{F}$-property.
Let $\phi$ be a function with summable variations and finite Gurevich pressure. If $\phi$ is not SPR,
  then there exists a sequence of
$\sigma$-invariant ergodic probability measures $\mu_n$ such that
\begin{equation}\label{escape-to-infinity}
P_{\mu_n}(\phi)\xrightarrow[n\to\infty]{} P_G(\phi)\text{,  but }\mu_n[\un{a}]\xrightarrow[n\to\infty]{} 0\text{ for {\em all} cylinders $[\un{a}]$}.
\end{equation}
\end{lemma}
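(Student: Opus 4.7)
The plan is to follow Ruette's approach for $\phi\equiv 0$ \cite{Ruette}, combining iterated application of Theorem~\ref{prop:ruette_generalized} with the variational principle and a diagonal argument. By the spectral decomposition reduction carried out at the end of the proof of Theorem~\ref{t.optimal-EKP}, it suffices to treat the topologically mixing case, so henceforth assume $\sigma:\Sigma^+\to\Sigma^+$ is topologically mixing. Inductively I would build a nested sequence $\mathfs{G}=\mathfs{G}_0\supsetneq\mathfs{G}_1\supsetneq\mathfs{G}_2\supsetneq\cdots$ of subgraphs by applying Theorem~\ref{prop:ruette_generalized} at each step to $\mathfs{G}_{k-1}$ and $\phi|_{\Sigma^+(\mathfs{G}_{k-1})}$: this yields $\mathfs{G}_k$ on which $\sigma$ is topologically mixing, $\phi|_{\Sigma^+(\mathfs{G}_k)}$ is again not SPR, and $P_G(\phi|_{\Sigma^+(\mathfs{G}_k)})=P_G(\phi)$. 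The variational principle applied to this topologically mixing subsystem then furnishes an ergodic $\sigma$-invariant probability measure $\mu_k$ supported on $\Sigma^+(\mathfs{G}_k)\subset\Sigma^+$ with $P_{\mu_k}(\phi)>P_G(\phi)-1/k$.

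The remaining task is to refine the induction so that for every state $s\in S$ one has $s\notin\mathfs{G}_k$ for all sufficiently large $k$: this forces $\mu_k[s]\to 0$, and hence $\mu_k[\un a]\leq\mu_k[a_0]\to 0$ for every cylinder $[\un a]=[a_0,\ldots,a_{n-1}]$, which is exactly \eqref{escape-to-infinity}. Enumerate the states as $s_1,s_2,\ldots$ and arrange the $k$th inductive step to eliminate $s_k$ whenever it is still present in $\mathfs{G}_{k-1}$. The key observation enabling this is that since $\phi|_{\Sigma^+(\mathfs{G}_{k-1})}$ is not SPR, the alphabet of $\mathfs{G}_{k-1}$ must be infinite (on a finite-state SFT every summable-variations potential is SPR by Ruelle's Perron--Frobenius theorem; cf.\ Example~1 in \S\ref{s.eq}), so the finite core $\mathfs{G}^{(k)}_{\mathrm{core}}$ used in the proof of Theorem~\ref{prop:ruette_generalized} can be chosen disjoint from $\{s_1,\ldots,s_k\}$. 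Exploiting the freedom to pick the two paths $\gamma_0,\gamma_1\in\mathcal{E}$ and the partition $E_0,E_1$ of the outgoing edges at their common-prefix endpoint, one arranges --- possibly after several sub-iterations of the edge-removal procedure within step $k$ --- that $s_k$ falls into a strongly connected component that is dropped when restricting to the component of the core.

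The main obstacle is the execution of this diagonal refinement: the partition index $i_0\in\{0,1\}$ in Theorem~\ref{prop:ruette_generalized} is determined implicitly by the requirement of preserving pressure, so a single application of the theorem need not eliminate $s_k$. Here the $\mathcal{F}$-property is essential, since by \eqref{Z-n-finite} only finitely many cycles of any bounded length pass through any given vertex, and isolating a prescribed state from the core thus requires severing only finitely many edges --- something that can always be achieved by sufficiently many repeated applications of the edge-removal procedure. Once the diagonal refinement is in place, the $\mu_k$ constructed above satisfy \eqref{escape-to-infinity}, completing the proof.
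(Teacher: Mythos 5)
Your overall framework is right (iterated edge removal via Theorem~\ref{prop:ruette_generalized}, variational principle on the resulting mixing subgraphs, diagonal argument over an enumeration of the states), and the reduction to the topologically mixing case is correct. But the core mechanism you propose — eliminating a prescribed state $s_k$ at step $k$ while keeping the system ``full'' (same pressure, non-SPR, mixing) — is not achievable in general, and this is a genuine gap. Consider a graph in which every cycle passes through a fixed vertex $v$ (for instance, infinitely many disjoint loops all rooted at $v$): removing $v$ leaves nothing strongly connected, so no pressure-preserving subgraph avoiding $v$ exists, yet such graphs can carry non-SPR potentials. Theorem~\ref{prop:ruette_generalized} only guarantees the existence of \emph{some} proper full subgraph; it gives you no control over which vertex (if any) is dropped, and your appeal to the $\mathcal{F}$-property does not fix this. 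The $\mathcal{F}$-property (via \eqref{Z-n-finite}) tells you there are only finitely many cycles of each bounded length at a given vertex — it does \emph{not} let you sever all edges incident to a vertex while preserving pressure.

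The paper's proof avoids this obstacle entirely. Rather than deleting a state $a$, it constructs (by repeated, $\mathcal{F}$-controlled applications of the edge-removal dichotomy) a full subgraph in which every loop at $a$ has length at least $N$. The $\mathcal{F}$-property is used precisely to ensure that there are only finitely many short cycle-extensions $W_N(\cdot,a)$ to trim, so the inductive shortening of loops terminates. Then the ergodic theorem gives that every invariant measure on the resulting subsystem assigns $[a]$ mass at most $1/N$, since $a$ occurs with upper density $\le 1/N$ in every orbit. Iterating over $a_1,\ldots,a_N$ yields the graph $\mathfs H_N$, and the variational principle on $\Sigma^+(\mathfs H_N)$ supplies the measures $\mu_N$. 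So the conclusion $\mu_N[\un a]\to 0$ is obtained through a density bound from the ergodic theorem, not through vertex elimination. You should replace the elimination strategy with this loop-lengthening strategy; everything else in your outline then goes through.
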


\begin{proof}

We will say that a subgraph $\mathfs G'\subset\mathfs G$ is {\em full}, if (a) $\mathfs G'$ is strongly connected, (b) $P_G(\phi|_{\Sigma^+(\mathfs G')})=P_G(\phi)$, and (c) $\phi|_{\Sigma^+(\mathfs G')}$ is not SPR.

 Choose an enumeration $V=\{a_1,a_2,\ldots,\}$ of the set of vertices of $\mathfs G$. We will construct full subgraphs $\mathfs H_N\subset \mathfs G$ such that for each $1\leq i\leq N$,  either $a_i$ is not a vertex of $\mathfs H_N$, or every path in $\mathfs H_N$ from $a_i$ to $a_i$ has length at least $N$. Then the  upper density of $a_i$ in infinite $\mathfs H_N$-paths is at most $1/N$ $(1\leq i\leq N)$.
By the ergodic theorem, every invariant probability measure on $\Sigma^+(\mathfs H_N)$  must  give  $[a_1],\ldots,[a_N]$ measure $\leq \frac{1}{N}$. By the variational principle on $\Sigma^+(\mathfs H_N)$, there are ergodic  invariant measures  $\mu_N'$ on $\Sigma^+(\mathfs H_N)$  such that
$$
|P_{\mu_N'}(\phi)-P_G(\phi)|<\frac{1}{N}\ , \ \mu_N'[a_i]\leq\frac{1}{N}\ \ \ \ \ (i=1,\ldots,N).
$$
Then the measures  $\mu_N(E)=\mu_N'(E\cap \Sigma^+(\mathfs H_N))$ on $\Sigma^+$ satisfy \eqref{escape-to-infinity}.


A {\em loop of length $N$} is an admissible word of the form $(a,\xi_1,\ldots,\xi_{N-1},a)$. An  $\mathfs H$-loop is a loop inside the  subgraph $\mathfs H$. Note that it is not required to be a first return loop, i.e.\ $\xi_i$ can be equal to $a$.
Let
\[
W_N(\mathfs H,a):=\{(a,b):(a,b)\text{ can be extended to an admissible $\mathfs H$-loop of length $N$}\}.
\]
More generally, for a cylinder $[\un a] = [a,x_1,\dots,x_{n-1}]$ of length $n\le N$, define
\[
  W_N(\mathfs H,\un a ):=\{ (\un a, b): (\un a,b)\text{ can be extended to an admissible $\mathfs H$-loop of length $N$}\}
\]
where $(\un a, b) = (a,x_1,\dots,x_{n-1}, b)$.
Clearly $|W_N(\mathfs H,\un a)|, |W_{N}(\mathfs H,a)|\leq Z_N(0,a)$, therefore
$|W_N(\mathfs H,\un a)|, |W_{N}(\mathfs H,a)|<\infty$  by the $\mathcal{F}$-property.

Fix $a:=a_1$. We prove the following statement:

\medskip
\noindent
{\em{Claim:\/}}
For any full subgraph $\mathfs H\subset\mathfs G$, for every $N\geq 2$, there exists a full subgraph $\mathfs G_N\subset\mathfs H$ without any loops at $a$ of length $< N$.
Equivalently,
\[
  |W_\ell(\mathfs G_N, a)| =  0\text{ for all }\ell<N.
\]

\noindent
{\em Proof.\/} Suppose $\mathfs H\subset\mathfs G$ is full. We prove the claim by induction on $N$.

{\em{Beginning of the induction ($N=2$):}}
Let $\mathfs G_2\subseteq\mathfs H$ denote the graph $\mathfs H$ with the edge $a\to a$ removed.
$\mathfs G_2$ is strongly connected, and $Z_n^\ast(\phi|_{\Sigma(\mathfs G_2)},a)=Z_n^\ast(\phi,a)$ for all $n>1$.
By Lemma~\ref{l.p-star}, if $\phi$ is not SPR, then
$
P_G(\phi)=P_G^\ast(\phi,a)=P_G^\ast(\phi|_{\Sigma^+(\mathfs G_2)},a)\leq P_G(\phi|_{\Sigma^+(\mathfs G_2)})\leq P_G(\phi),
$
whence
$
P_G^\ast(\phi|_{\Sigma^+(\mathfs G_2)},a)= P_G(\phi|_{\Sigma^+(\mathfs G_2)})=P_G(\phi)
$.
So  $\mathfs G_2$ is full.
Since $a\to a$ is not an edge in $\mathfs G_2$, $|W_1(\mathfs G_2, a)|=0$.

{\em{Induction step:}}
Assume by induction that the claim  holds for $N$. Then there is a full sub-graph $\mathfs G_{N}\subset\mathfs H$ such that $|W_n(\mathfs G_{N},a)|=0$ for any $n<N$. In particular, $\mathfs G_N$ does not contain the edge $a\to a$.
If $|W_N(\mathfs G_{N},a)|=0$, we can take $\mathfs G_{N+1}:=\mathfs G_N$. It remains to treat the case $|W_N(\mathfs G_{N},a)|>0$.

We first construct a full subgraph $\overline{\mathfs G}_{N}$ of $\mathfs G_{N}$  for which $|W_N(\mathfs G_{N},a)|\leq 1$.
If $|W_N(\mathfs G_{N},a)|= 1$,
set $\overline{\mathfs G}_{N}:=\mathfs G_{N}$.
Otherwise, we split the edges  going out from $a$ into two disjoint subsets, $E_0$ and $E_1$,
each containing at least one element of  $W_N(\mathfs G_{N},a)$.
Let $\mathfs G_{N}^i$ be the graph obtained by removing edges $E_{1-i}$  and restricting to the strongly connected component containing $a$.
Arguing as in the proof of Theorem~\ref{prop:ruette_generalized},
we can show that  at least one of the $\mathfs G_{N}^i$ is full.
Call the full subgraph   $\mathfs G_{N}^{(1)}$.
By construction, $|W_N(\mathfs G_{N}^{(1)},a)|< |W_N(\mathfs G_{N},a)|$.
If $|W_N(\mathfs G_{N}^{(1)},a)|\leq 1$,
set $\overline{\mathfs G}_{N}:=\mathfs G_{N}^{(1)}$.
Otherwise repeat the procedure to obtain a full subgraph $\mathfs G_{N}^{(2)}\subset \mathfs G_{N}^{(1)}$ with $|W_N(\mathfs G_{N}^{(2)},a)|< |W_N(\mathfs G_{N}^{(1)},a)|$.
Repeating this, we eventually arrive to a full subgraph $\overline{\mathfs G}_{N}:=\mathfs G_{N}^{(m)}\subset\cdots\subset\mathfs G_{N}^{(1)}$ such that
$
|W_N(\overline{\mathfs G}_{N},a)|\leq 1.
$

If $|W_N(\overline{\mathfs G}_{N},a)|=0$, we finished the inductive step, and we  set $\mathfs G_{N+1} = \overline{\mathfs G}_{N}$.
If $|W_N(\overline{\mathfs G}_{N},a)|=1$, let $(a,b)$ denote the unique edge in $W_N(\overline{\mathfs G}_{N},a)$.
Then,
\begin{equation}\label{e.xi-1=b}
  \text{for every $\overline{\mathfs G}_{N}$-loop $(a,\xi_1,\ldots,\xi_{n-1},a)$, either $\xi_1=b$ or $n>N$}
\end{equation}
($n<N$ is not possible because $|W_n(\mathfs G_N,a)|=0$ and  $\mathfs G_N\supset\overline{\mathfs G}_{N}$.) Note that $b\neq a$, because  $|W_1(\mathfs G_N,a)|=0$.
Let $\overline{\mathfs G}_{N}'$ denote the irreducible component of $a$, after removing the edge $a\to b$.
If $\overline{\mathfs G}_{N}'$ is full, we call it $\mathfs G_{N+1}$ and we finished the induction.
If  $\overline{\mathfs G}_{N}'$ is not full, then  by the edge removal argument, the irreducible component of $a$ after removing all edges $a\to\xi$ {\em except} for $a\to b$, is full. Call it $\overline{\mathfs G}_{N}^{ab}$.
(If $b$ is the unique neighbor of $a$, $\overline{\mathfs G}_{N}^{ab}=\overline{\mathfs G}_N'$.)

We now apply the preceding procedure to $\overline{\mathfs G}_{N}^{ab}$.
That is, we split the neighbors at $b$ in two, till we obtain a full subgraph
$\widetilde{\mathfs G}_{N}^{ab}$ such that $|W_N(\widetilde{\mathfs G}_{N}^{ab},ab)|\leq 1$.
If $|W_N(\widetilde{\mathfs G}_{N}^{ab},ab)|=0$, then by \eqref{e.xi-1=b}, $|W_N(\widetilde{\mathfs G}_{N}^{ab},a)|=0$,
we let $\mathfs G_{N+1}:=\widetilde{\mathfs G}_{N}^{ab}$, and we finish the induction. Otherwise, we let $(a,b,c)$ denote the unique member in $W_N(\widetilde{\mathfs G}_{N}^{ab},ab)$. Note that $c\neq a$, because
$\widetilde{\mathfs G}_{N}^{ab}\subset \mathfs G_N$ and $|W_3(\mathfs G_N,a)|=0$.

We now rename $\mathfs G_N^{abc}:=\widetilde{\mathfs G}_{N}^{ab}$ and repeat  the previous procedure with $\mathfs G_N^{abc}$ replacing $\mathfs G_N^{ab}$ to obtain a full subgraph $\mathfs G_N^{abcd}$  with a unique member $(a,b,c,d)$ in $W_N(\mathfs G_N^{abcd},a)$. Again, $d\neq a$. We continue this way until $W_N(\cdot,a)$ is empty or until  we obtain an admissable word $\un{a}$ of length $N$ such that $W_N(\mathfs G^{\un{a}}_N,a)=\{\un{a}\}$, and so that  $a_i\neq a$ for all $i$ except the first one.

Let $z:=$last symbol in $\un{a}$. We wish to remove the edge $z\to a$ from  $\overline{\mathfs G}_{N}^{\un a}$.
To do this we apply the edge removal process, but this time to the {\em incoming} edges at $a$.
We split these into the set $\{z\to a\}$ and its complement.
The irreducible component of $a$, after removing all edges $\xi\to a$ {\em except} for $z\to a$, is just a single loop $(\un{a},a)$ so its $Z_n^\ast$ are equal to zero for all $n>N$.
It follows that the  irreducible component of $a$ after removing {\em just} $z\to a$ must be full. We call this graph $\mathfs G_{N+1}$.  $W_{N}(\mathfs G_{N+1},a)=\emptyset$ by construction, and $W_n(\mathfs G_{N+1},a)=\emptyset$ for $n<N+1$ because $\mathfs G_{N+1}\subset \mathfs G_N$,
and we finished the induction. The claim is proved.

\medskip
\noindent
{\em{Conclusion of the proof:}} The  claim we just proved gives us a full subgraph $\mathfs G_N$ so that $|W_n(\mathfs G_N,a)|=0$ for all $n<N$. In particular, the upper density of $a$ in every infinite $\mathfs G_N$-path is at most $1/N$. This takes care of $a=a_1$.

If $a_2\not\in \mathfs G_N$, its density in $\mathfs G_N$-paths is zero. Otherwise we apply the claim to $\mathfs H:=\mathfs G_N$ with $a:=a_2$, and obtain  a full subgraph with $a_1$ and $a_2$ appearing with upper density at most $\frac{1}{N}$. After $N$ steps like this we arrive to the graph $\mathfs H_N$ from the beginning of the proof.
\end{proof}

\section{Two sided topological Markov shifts}

Suppose $\mathfs G$ is a countable directed graph.
The {\em two-sided} topological Markov shift associated to $\mathfs G$ is the dynamical system with the space
$$
\Sigma=\Sigma(\mathfs G):=\{\un{x}=(\ldots, x_{-1},x_0,x_1,\ldots): x_i\in S,  x_i\to x_{i+1}\text{ for all }i\},
$$
the action $\sigma(\un{x})_i=x_{i+1}$, and the metric $d(\un{x},\un{y})=\exp(-\inf\{|i|:x_i\neq y_i\})$.

The definitions we gave in \S\ref{s.setup} for one-sided shifts of the Gurevich pressure, Gurevich entropy, the spectral decomposition and SPR topological Markov shifts  extend verbatim to the two-sided case, after  replacing all the $\Sigma^+$ by $\Sigma$.

The definition of the SPR property for potentials has a similar extension to the two-sided case, except that now to define the discriminant, we need to work with the induced shift on $\ov{\Sigma}:=\ov{S}^{\mathbb Z}$, instead of $\ov{\Sigma}^+=\ov{S}^{\mathbb N\cup\{0\}}$.

With these definitions in place, Theorems~\ref{t.optimal-EKP} and \ref{t.suboptimal-EKP} on the EKP inequality for SPR potentials  extend to the two-sided setup without much difficulty.
We  explain why.

Any weakly H\"older continuous $\phi:\Sigma\to\R$ is cohomologous via a bounded  continuous transfer function to a ``one-sided" function of the form $\phi^+\circ\pi$, where $\phi^+$ is a weakly H\"older continuous function on $\Sigma^+$, and $\pi:\Sigma\to\Sigma^+$ is the natural projection, $\pi(\un{x})=(x_0,x_1,\ldots)$,
\cite{Sinai-Gibbs}, \cite{Bowen-LNM}, \cite{Daon}.

The pressure function is defined in terms of sums over periodic orbits. Such sums do not change if we change $\phi$ by a coboundary. Therefore, it is easy to see that $\phi$ is SPR if and only if $\phi^+$ is SPR, and $P_G(\phi)=P_G(\phi^+)$.

 If $\mu$ is a shift invariant measure on $\Sigma$, then $\mu^+:=\mu\circ\pi^{-1}$ is a shift invariant probability measure on $\Sigma^+$, and it is easy to see that $h_{\mu^+}(\sigma)=h_{\mu}(\sigma)$.
In addition, coboundaries with bounded continuous transfer functions are absolutely integrable with zero integral for all shift invariant probability measures, so $\int\phi d\mu=\int\phi^+ d\mu^+$. So $\mu\in\mathfs M_\phi(\Sigma)$ if and only if  $\mu^+\in\mathfs M_{\phi^+}(\Sigma^+)$, and in this case
$
 P_{\mu}(\phi)=P_{\mu^+}(\phi^+).
$

In particular, $\mu$ maximizes $ P_{\mu}(\phi)$ if and only if $\mu^+$ maximizes $P_{\mu^+}(\phi^+)$, and therefore $\mu_\phi$ is the equilibrium measure of $\phi$ on $\Sigma$, if and only if $(\mu_\phi)^+$ is the equilibrium measure of $\phi^+$ on $\Sigma^+$.
We are therefore at liberty to write
$$
\mu_\phi^+=\mu_{\phi^+}.
$$
It is now a simple matter to see that the EKP inequalities \eqref{EKP-ineq} and \eqref{EKP-ineq-pressure} for $\Sigma^+$ and $\phi^+$, imply the EKP inequalities \eqref{EKP-ineq} and \eqref{EKP-ineq-pressure} for $\Sigma$ and $\phi$, {\em  provided  the test function $\psi$ belongs to }
$
\mathcal H_\beta^+(\Sigma):=\{\psi^+\circ\pi\circ \sigma^n: \psi\in\mathcal H_\beta(\Sigma^+), n\in\Z\}.
$

Since $\mathcal H_\beta^+(\Sigma)$ is dense in the space of $\beta$-H\"older continuous functions on $\Sigma$,  \eqref{EKP-ineq} and \eqref{EKP-ineq-pressure} follow for all $\beta$-H\"older continuous functions on $\Sigma$.

\medskip
\noindent
{\em Note added in proof.\/}
J. Buzzi, S. Crovisier and O.S. have recently shown that every  topologically transitive $C^\infty$ diffeomorphism on a closed smooth surface admits a H\"older continuous symbolic coding by an SPR countable Markov shift, with finite Gurevich entropy. By the results of this paper, such diffeomorphisms satisfy the EKP inequality for all H\"older continuous functions $\psi$ on the manifold. Details will appear elsewhere.

\bibliographystyle{alpha}

\begin{thebibliography}{ELMV12}

\bibitem[AD01]{Aaronson-Denker}
Jon Aaronson and Manfred Denker.
\newblock Local limit theorems for partial sums of stationary sequences
  generated by {G}ibbs-{M}arkov maps.
\newblock {\em Stoch. Dyn.}, 1(2):193--237, 2001.

\bibitem[AW70]{Adler-Weiss-Similarity-Toral-Automorphisms}
Roy~L. Adler and Benjamin Weiss.
\newblock {\em Similarity of automorphisms of the torus}.
\newblock Memoirs of the American Mathematical Society, No. 98. American
  Mathematical Society, Providence, R.I., 1970.

\bibitem[BCFT18]{Burns-Climenhaga-Fisher-Thompson}
Keith Burns, Vaughn Climenhaga, Todd Fisher, and Daniel J.~J. Thompson.
\newblock Unique equilibrium states for geodesic flows in nonpositive
  curvature.
\newblock {\em Geom. Funct. Anal.}, 28(5):1209--1259, 2018.

\bibitem[BCSar]{Buzzi-Crovisier-Sarig-MME}
J\'{e}r\^{o}me Buzzi, Sylvain Crovisier, and Omri Sarig.
\newblock Measures of maximal entropy for surface diffeomorphisms.
\newblock {\em Ann. of Math.}, to appear.

\bibitem[BL98]{Babillot-Ledrappier-Lalley}
Martine Babillot and Fran\c{c}ois Ledrappier.
\newblock Lalley's theorem on periodic orbits of hyperbolic flows.
\newblock {\em Ergodic Theory Dynam. Systems}, 18(1):17--39, 1998.

\bibitem[Bow72]{Bowen-Closed-Geodesics}
Rufus Bowen.
\newblock The equidistribution of closed geodesics.
\newblock {\em Amer. J. Math.}, 94:413--423, 1972.

\bibitem[Bow75]{Bowen-LNM}
Rufus Bowen.
\newblock {\em Equilibrium states and the ergodic theory of {A}nosov
  diffeomorphisms}.
\newblock Lecture Notes in Mathematics, Vol. 470. Springer-Verlag, Berlin,
  1975.

\bibitem[Bow75]{Bowen-UES}
Rufus Bowen.
\newblock Some systems with unique equilibrium states.
\newblock {\em Math. Systems Theory}, 8(3):193--202, 1974/75.

\bibitem[BR06]{Buzzi-Ruette}
J\'{e}r\^{o}me Buzzi and Sylvie Ruette.
\newblock Large entropy implies existence of a maximal entropy measure for
  interval maps.
\newblock {\em Discrete Contin. Dyn. Syst.}, 14(4):673--688, 2006.

\bibitem[BS03]{Buzzi-Sarig}
J{\'e}r{\^o}me Buzzi and Omri Sarig.
\newblock Uniqueness of equilibrium measures for countable {M}arkov shifts and
  multidimensional piecewise expanding maps.
\newblock {\em Ergodic Theory Dynam. Systems}, 23(5):1383--1400, 2003.

\bibitem[Buz97]{Buzzi-Interval-Maps}
J\'{e}r\^{o}me Buzzi.
\newblock Intrinsic ergodicity of smooth interval maps.
\newblock {\em Israel J. Math.}, 100:125--161, 1997.

\bibitem[CKW21]{Climenhaga-Knieper}
Vaughn Climenhaga, Gerhard Knieper, and Khadim War.
\newblock Uniqueness of the measure of maximal entropy for geodesic flows on
  certain manifolds without conjugate points.
\newblock {\em Adv. Math.}, 376:107452, 44, 2021.

\bibitem[Cli18]{Climenhaga-CMP}
Vaughn Climenhaga.
\newblock Specification and towers in shift spaces.
\newblock {\em Comm. Math. Phys.}, 364(2):441--504, 2018.

\bibitem[CS09]{Cyr-Sarig}
Van Cyr and Omri Sarig.
\newblock Spectral gap and transience for {R}uelle operators on countable
  {M}arkov shifts.
\newblock {\em Comm. Math. Phys.}, 292(3):637--666, 2009.

\bibitem[CT12]{Climenhaga-Thompson-beta}
Vaughn Climenhaga and Daniel~J. Thompson.
\newblock Intrinsic ergodicity beyond specification: {$\beta$}-shifts,
  {$S$}-gap shifts, and their factors.
\newblock {\em Israel J. Math.}, 192(2):785--817, 2012.

\bibitem[Dao13]{Daon}
Yair Daon.
\newblock Bernoullicity of equilibrium measures on countable {M}arkov shifts.
\newblock {\em Discrete Contin. Dyn. Syst.}, 33(9):4003--4015, 2013.

\bibitem[ELMV12]{Einsiedler-Lindenstrauss-Philippe-Venkatesh}
Manfred Einsiedler, Elon Lindenstrauss, Philippe Michel, and Akshay Venkatesh.
\newblock The distribution of closed geodesics on the modular surface, and
  {D}uke's theorem.
\newblock {\em Enseign. Math. (2)}, 58(3-4):249--313, 2012.

\bibitem[Fie96]{fiebig1996symbolic}
Ulf-Rainer Fiebig.
\newblock {\em Symbolic dynamics and locally compact Markov shifts}.
\newblock PhD thesis, U. Heidelberg, 1996.

\bibitem[GH88]{Guivarch-Hardy}
Yves Guivarc'h and Jean Hardy.
\newblock Th\'eor\`emes limites pour une classe de cha\^\i nes de {M}arkov et
  applications aux diff\'eomorphismes d'{A}nosov.
\newblock {\em Ann. Inst. H. Poincar\'e Probab. Statist.}, 24(1):73--98, 1988.

\bibitem[GNS{\etalchar{+}}20]{gouezel2020pressure}
Sébastien Gouëzel, Camille Noûs, Barbara Schapira, Samuel Tapie, and Felipe
  Riquelme.
\newblock Pressure at infinity and strong positive recurrence in negative
  curvature, 2020.

\bibitem[GS98]{Gurevich-Savchenko}
Boris~M. Gurevich and Sergey~V. Savchenko.
\newblock Thermodynamic formalism for symbolic {M}arkov chains with a countable
  number of states.
\newblock {\em Uspekhi Mat. Nauk}, 53(2(320)):3--106, 1998.

\bibitem[Gur69]{Gurevich-Entropy}
Boris~M. Gurevich.
\newblock Topological entropy of a countable {M}arkov chain.
\newblock {\em Dokl. Akad. Nauk SSSR}, 187:715--718, 1969.

\bibitem[Gur70]{Gurevich-Measures-Of-Maximal-Entropy}
Boris~M. Gurevich.
\newblock Shift entropy and {M}arkov measures in the space of paths of a
  countable graph.
\newblock {\em Dokl. Akad. Nauk SSSR}, 192:963--965, 1970.

\bibitem[GZ88]{Gurevich-Zargaryan}
Boris~M. Gurevich and A.~S. Zargaryan.
\newblock Conditions for the existence of a maximal measure for a countable
  symbolic {M}arkov chain.
\newblock {\em Vestnik Moskov. Univ. Ser. I Mat. Mekh.}, (5):14--18, 103, 1988.

\bibitem[Hof79]{Hofbauer-Intrinsic-Erg}
Franz Hofbauer.
\newblock On intrinsic ergodicity of piecewise monotonic transformations with
  positive entropy.
\newblock {\em Israel J. Math.}, 34(3):213--237 (1980), 1979.

\bibitem[IJT15]{Iommi-Jordan-Todd-Suspension-Flows}
Godofredo Iommi, Thomas Jordan, and Mike Todd.
\newblock Recurrence and transience for suspension flows.
\newblock {\em Israel Journal of Mathematics}, 209(2):547--592, 2015.

\bibitem[ITV19]{iommi2019escape}
Godofredo Iommi, Mike Todd, and Anibal Velozo.
\newblock Escape of entropy for countable markov shifts, 2019.

\bibitem[ITV20]{Iommi-Todd-Velozo-USC}
Godofredo Iommi, Mike Todd, and An\'{\i}bal Velozo.
\newblock Upper semi-continuity of entropy in non-compact settings.
\newblock {\em Math. Res. Lett.}, 27(4):1055--1078, 2020.

\bibitem[IV21]{iommi2021space}
Godofredo Iommi and Anibal Velozo.
\newblock The space of invariant measures for countable markov shifts.
\newblock {\em Journal d'Analyse Math{\'e}matique}, 143(2):461--501, 2021.

\bibitem[Kad15]{Kadyrov-Effective-Uniqueness}
Shirali Kadyrov.
\newblock Effective uniqueness of {P}arry measure and exceptional sets in
  ergodic theory.
\newblock {\em Monatsh. Math.}, 178(2):237--249, 2015.

\bibitem[Kad17]{kadyrov2017effective}
Shirali Kadyrov.
\newblock Effective equidistribution of periodic orbits for subshifts of finite
  type.
\newblock In {\em Colloquium Mathematicum}, volume 149, pages 93--101. Polska
  Akademia Nauk, 2017.

\bibitem[Kat95]{Kato-Book}
Tosio Kato.
\newblock {\em Perturbation theory for linear operators}.
\newblock Classics in Mathematics. Springer-Verlag, Berlin, 1995.
\newblock Reprint of the 1980 edition.

\bibitem[Kha17]{khayutin2017large}
Ilya Khayutin.
\newblock Large deviations and effective equidistribution.
\newblock {\em International Mathematics Research Notices},
  2017(10):3050--3106, 2017.

\bibitem[Kni98]{Knieper}
Gerhard Knieper.
\newblock The uniqueness of the measure of maximal entropy for geodesic flows
  on rank {$1$} manifolds.
\newblock {\em Ann. of Math. (2)}, 148(1):291--314, 1998.

\bibitem[Led74]{Ledrappier-g-measures}
Fran\c{c}ois Ledrappier.
\newblock Principe variationnel et syst\`emes dynamiques symboliques.
\newblock {\em Z. Wahrscheinlichkeitstheorie und Verw. Gebiete}, 30:185--202,
  1974.

\bibitem[Mor07]{Morris-Zero-Temp}
Ian~D. Morris.
\newblock Entropy for zero-temperature limits of {G}ibbs-equilibrium states for
  countable-alphabet subshifts of finite type.
\newblock {\em J. Stat. Phys.}, 126(2):315--324, 2007.

\bibitem[MU01]{Mauldin-Urbanski}
R.~Daniel Mauldin and Mariusz Urba\'{n}ski.
\newblock Gibbs states on the symbolic space over an infinite alphabet.
\newblock {\em Israel J. Math.}, 125:93--130, 2001.

\bibitem[Par64]{Parry-Intrinsic-MC}
William Parry.
\newblock Intrinsic {M}arkov chains.
\newblock {\em Trans. Amer. Math. Soc.}, 112:55--66, 1964.

\bibitem[Pav20]{Pavlov-Proc-AMS}
Ronnie Pavlov.
\newblock On entropy and intrinsic ergodicity of coded subshifts.
\newblock {\em Proc. Amer. Math. Soc.}, 148(11):4717--4731, 2020.

\bibitem[Pol11]{Polo-PhD}
Fabrizio Polo.
\newblock {\em Equidistribution in chaotic dynamical systems}.
\newblock ProQuest LLC, Ann Arbor, MI, 2011.
\newblock Thesis (Ph.D.)--The Ohio State University.

\bibitem[PP90]{Parry-Pollicott-Asterisque}
William Parry and Mark Pollicott.
\newblock Zeta functions and the periodic orbit structure of hyperbolic
  dynamics.
\newblock {\em Ast\'erisque}, (187-188):268, 1990.

\bibitem[Rue72]{Ruelle-Z-nu}
David Ruelle.
\newblock Statistical mechanics on a compact set with {$Z^{\nu }$} action
  satisfying expansiveness and specification.
\newblock {\em Bull. Amer. Math. Soc.}, 78:988--991, 1972.

\bibitem[Rue78]{Ruelle-TDF-book}
David Ruelle.
\newblock {\em Thermodynamic formalism}, volume~5 of {\em Encyclopedia of
  Mathematics and its Applications}.
\newblock Addison-Wesley Publishing Co., Reading, Mass., 1978.
\newblock The mathematical structures of classical equilibrium statistical
  mechanics, With a foreword by Giovanni Gallavotti and Gian-Carlo Rota.

\bibitem[Rue01]{ruette2001chaos}
Sylvie Ruette.
\newblock {\em Chaos en dynamique topologique, en particulier sur l'intervalle,
  mesures d'entropie maximale}.
\newblock PhD thesis, Universit{é} de la M{é}diterran{é}e-Aix-Marseille II,
  2001.

\bibitem[Rue03]{Ruette}
Sylvie Ruette.
\newblock On the {V}ere-{J}ones classification and existence of maximal
  measures for countable topological {M}arkov chains.
\newblock {\em Pacific J. Math.}, 209(2):366--380, 2003.

\bibitem[R{\"u}h16]{Ruhr-Effective-Uniqueness}
Ren{\'e} R{\"u}hr.
\newblock Effectivity of uniqueness of the maximal entropy measure on
  {$p$}-adic homogeneous spaces.
\newblock {\em Ergodic Theory Dynam. Systems}, 36(6):1972--1988, 2016.

\bibitem[R{\"u}h21]{Ruhr-BIP}
Ren{\'e} R{\"u}hr.
\newblock Pressure inequalities for gibbs measures of countable markov shifts.
\newblock {\em Dynamical Systems}, pages 1--8, 2021.

\bibitem[Sal88]{salama1988topological}
Ibrahim Salama.
\newblock Topological entropy and recurrence of countable chains.
\newblock {\em Pacific journal of mathematics}, 134(2):325--341, 1988.

\bibitem[Sar99]{Sarig-ETDS-99}
Omri~M. Sarig.
\newblock Thermodynamic formalism for countable {M}arkov shifts.
\newblock {\em Ergodic Theory Dynam. Systems}, 19(6):1565--1593, 1999.

\bibitem[Sar01a]{Sarig-CMP-2001}
Omri~M. Sarig.
\newblock Phase transitions for countable {M}arkov shifts.
\newblock {\em Comm. Math. Phys.}, 217(3):555--577, 2001.

\bibitem[Sar01b]{Sarig-Null}
Omri~M. Sarig.
\newblock Thermodynamic formalism for null recurrent potentials.
\newblock {\em Israel J. Math.}, 121:285--311, 2001.

\bibitem[Sar03]{Sarig-Gibbs}
Omri~M. Sarig.
\newblock Existence of {G}ibbs measures for countable {M}arkov shifts.
\newblock {\em Proc. Amer. Math. Soc.}, 131(6):1751--1758, 2003.

\bibitem[Sar06]{Sarig-CMP-2006}
Omri~M. Sarig.
\newblock Continuous phase transitions for dynamical systems.
\newblock {\em Comm. Math. Phys.}, 267(3):631--667, 2006.

\bibitem[Sin72]{Sinai-Gibbs}
Yakov~G. Sina{\u\i}.
\newblock Gibbs measures in ergodic theory.
\newblock {\em Uspehi Mat. Nauk}, 27(4(166)):21--64, 1972.

\bibitem[VJ62]{Vere-Jones-Geometric-Ergodicity}
David Vere-Jones.
\newblock Geometric ergodicity in denumerable {M}arkov chains.
\newblock {\em Quart. J. Math. Oxford Ser. (2)}, 13:7--28, 1962.

\bibitem[Wal75]{Walters-g-measures}
Peter Walters.
\newblock Ruelle's operator theorem and {$g$}-measures.
\newblock {\em Trans. Amer. Math. Soc.}, 214:375--387, 1975.

\bibitem[Wei70]{Weiss-Intrisic}
Benjamin Weiss.
\newblock Intrinsically ergodic systems.
\newblock {\em Bull. Amer. Math. Soc.}, 76:1266--1269, 1970.

\end{thebibliography}
\newcommand{\etalchar}[1]{$^{#1}$}
\def\cprime{$'$} \def\cprime{$'$} \def\cprime{$'$} \def\cprime{$'$}

\end{document}